\documentclass[10pt]{amsart}
\usepackage{amsrefs}
\usepackage{amssymb}
\usepackage[all,arc]{xy}
\usepackage{graphicx}
\usepackage{mathrsfs}
\usepackage{xcolor}
\usepackage{hyperref}

\usepackage{tikz}
\usetikzlibrary{matrix,arrows}

\newcommand{\Aut}{\mathrm{Aut}}

\newcommand{\cd}{\mathrm{cd}}

\newcommand{\Gal}{\mathrm{Gal}}

\newcommand{\image}{\mathrm{im}}
\newcommand{\Inf}{\mathrm{inf}}

\newcommand{\kernel}{\mathrm{ker}}

\newcommand{\Res}{\mathrm{res}}

\newcommand{\chr}{\mathrm{char}}
\newcommand{\Or}{\mathbf{Or}}
\newcommand{\ob}{\mathrm{ob}}

\newcommand{\euV}{\mathscr{V}}
\newcommand{\euE}{\mathscr{E}}

\newcommand{\eue}{\mathbf{e}}
\newcommand{\K}{\mathbb{K}}
\newcommand{\LL}{\mathbb{L}}

\newcommand{\N}{\mathbb{N}}

\addtolength{\textwidth}{20pt}
\addtolength{\evensidemargin}{-10pt}
\addtolength{\oddsidemargin}{-10pt}
\addtolength{\textheight}{15pt}


\raggedbottom

\DeclareFontFamily{U}{wncy}{}
\DeclareFontShape{U}{wncy}{m}{n}{<->wncyr10}{}
\DeclareSymbolFont{mcy}{U}{wncy}{m}{n}
\DeclareMathSymbol{\Sha}{\mathord}{mcy}{"58}
\DeclareMathSymbol{\sha}{\mathord}{mcy}{"78}

\begin{document}

\newtheorem{thm}{Theorem}[section]
\newtheorem{cor}[thm]{Corollary}
\newtheorem{lem}[thm]{Lemma}
\newtheorem{prop}[thm]{Proposition}
\newtheorem{defin}[thm]{Definition}
\newtheorem{exam}[thm]{Example}
\newtheorem{rem}[thm]{Remark}
\newtheorem{case}{\sl Case}
\newtheorem{claim}{Claim}
\newtheorem{fact}[thm]{Fact}
\newtheorem{question}[thm]{Question}
\newtheorem{conj}[thm]{Conjecture}
\newtheorem*{notation}{Notation}
\swapnumbers
\newtheorem{rems}[thm]{Remarks}
\newtheorem*{acknowledgment}{Acknowledgment}

\newtheorem{questions}[thm]{Questions}
\numberwithin{equation}{section}

\newcommand{\gr}{\mathrm{gr}}
\newcommand{\inv}{^{-1}}
\newcommand{\isom}{\cong}
\newcommand{\dbC}{\mathbb{C}}
\newcommand{\F}{\mathbb{F}}
\newcommand{\dbN}{\mathbb{N}}
\newcommand{\Q}{\mathbb{Q}}
\newcommand{\dbR}{\mathbb{R}}
\newcommand{\dbU}{\mathbb{U}}
\newcommand{\Z}{\mathbb{Z}}
\newcommand{\calG}{\mathcal{G}}
\newcommand{\sep}{\mathrm{sep}}
\newcommand{\ab}{\mathrm{ab}}
\newcommand{\Zen}{\mathrm{Z}}
\newcommand{\baK}{\bar{\K}^{\sep}}
\newcommand{\triv}{\{1\}}
\newcommand{\batheta}{\bar{\theta}}
\newcommand{\ttheta}{\tilde{\theta}}
\newcommand{\bone}{\mathbf{1}}
\newcommand{\cl}{\mathrm{cl}}
\newcommand{\Iso}{\mathrm{Iso}}
\newcommand{\ET}{\mathbf{ET}}
\newcommand{\argu}{\hbox to 7truept{\hrulefill}}
\newcommand{\bos}{\mathbf{s}}
\newcommand{\Ext}{\mathrm{Ext}}
\newcommand{\dbl}{[\![}
\newcommand{\dbr}{]\!]}
\newcommand{\ZLG}{\Z_\ell\dbl G\dbr}
\newcommand{\Isl}{\mathrm{Iso}}
\newcommand{\baG}{\bar{G}}
\newcommand{\Lie}{\mathscr{L}}
\newcommand{\tor}{\mathrm{tor}}
\newcommand{\boT}{\mathbf{T}}
\newcommand{\boA}{\mathbf{A}}
\newcommand{\boS}{\mathbf{S}}
\newcommand{\Pb}{\mathrm{Pb}}
\newcommand{\bfLam}{\mathbf{\Lambda}}
\newcommand{\bfXi}{\mathbf{\Xi}}


\newcommand{\hac}{\hat c}
\newcommand{\hatheta}{\hat\theta}

\title[Oriented pro-$\ell$ groups with the Bogomolov-Positselski property]{Oriented pro-$\ell$ groups  with the 
Bogomolov-Positselski property}
\author{Claudio Quadrelli}
\author{Thomas S. Weigel}
\address{Department of Mathematics and Applications, University of Milano Bicocca, 20125 Milan, Italy, EU}
\email{claudio.quadrelli@unimib.it}
\address{Department of Mathematics and Applications, University of Milano Bicocca, 20125 Milan, Italy, EU}
\email{thomas.weigel@unimib.it}
\date{\today}

\begin{abstract}
For a prime number $\ell$
we say that an oriented pro-$\ell$ group $(G,\theta)$ has the Bogomolov-Positselski property if 
the kernel of the
canonical projection on its maximal $\theta$-abelian quotient $\pi^{\ab}_{G,\theta}\colon G\to G(\theta)$ is a free 
pro-$\ell$ group contained in the Frattini subgroup of $G$.
We show that oriented pro-$\ell$ groups of elementary type have the Bogomolov-Positselski property (cf. Theorem~\ref{thm:one}).
This shows that Efrat's Elementary Type Conjecture implies a positive answer to Positselski's version of Bogomolov's Conjecture on maximal pro-$\ell$ Galois groups of a field $\K$ in case that $\K^\times/(\K^\times)^\ell$ is finite.
Secondly, it is shown that for an $H^\bullet$-quadratic oriented pro-$\ell$ group $(G,\theta)$ the Bogomolov-Positselski property can be expressed by the injectivity of the transgression map $d_2^{2,1}$ in the Hochschild-Serre spectral sequence (cf. Theorem~\ref{thm:two}).
\end{abstract}

\subjclass[2010]{Primary 12G05; Secondary 20E18, 20J06, 12F10}

\keywords{Maximal pro-$\ell$ Galois groups, Bogomolov's Conjecture, oriented pro-$\ell$ groups, 
Kummerian oriented pro-$\ell$ groups}

\maketitle

\section{Introduction}
\label{sec:intro}
By an {\sl $\ell$-oriented profinite group} for a prime number $\ell$ we understand a profinite group $G$ together
with a continuous homomorphism of profinite groups $\theta\colon G\to\Z_\ell^\times$, where $\Z_\ell^\times$
denotes the group of units of the ring of $\ell$-adic integers $\Z_\ell$.
An $\ell$-oriented pro-$\ell$ group $(G,\theta)$ will be simply called an {\sl oriented pro-$\ell$ group}.
For a field $\K$, we denote by $G_{\K}=\Gal(\baK/\K)$ its absolute Galois group, where $\baK$ denotes a separable closure of $\K$. For any prime number $\ell$, $G_{\K}$ carries naturally the cyclotomic $\ell$-orientation
$\tilde\theta_{\K,\ell}\colon G_{\K}\to \Z_\ell^\times$ (cf. Example~\ref{ex:Galois} and \cite[(1.3)]{qw:cyclotomic}).
The following conjecture formulated by L.~Positselski in \cite[Conjecture~2]{pos:k} was motivated by an
earlier conjecture of F.~Bogomolov (cf. \cite{bogo} and \cite[Conjecture~1]{pos:k}, see also Remark~\ref{rem:history Bogoconj} below).

\begin{conj}\label{conj:bogo_intro}
 Let $\K$ be a field containing a primitive $\ell^{\mathrm{th}}$-root of unity,
and also $\sqrt{-1}$ if $\ell=2$, and set $$\sqrt[\ell^\infty]{\K}=\K\left(\sqrt[\ell^n]{a},a\in\K,n\geq1\right).$$
Then the maximal pro-$\ell$ Galois group of $\sqrt[\ell^\infty]{\K}$ is a free pro-$\ell$ group.
\end{conj}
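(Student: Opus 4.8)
The plan is to reduce the conjecture to a purely group-theoretic property of the cyclotomically oriented maximal pro-$\ell$ Galois group and then to attack that property cohomologically. Write $G=G_\K(\ell)$ for the maximal pro-$\ell$ Galois group of $\K$, equipped with the cyclotomic orientation $\theta=\theta_{\K,\ell}$, and set $L=\sqrt[\ell^\infty]{\K}$. Since $\zeta_\ell\in\K$ (and $\sqrt{-1}\in\K$ when $\ell=2$), the orientation $\theta$ takes values in the principal units $1+\ell\Z_\ell$ (resp. $1+4\Z_2$), hence is trivial modulo $\ell$; consequently the defining $\theta$-commutators generating $N=\kernel(\pi^{\ab}_{G,\theta})$ lie in $G^\ell[G,G]=\Frat(G)$, so the containment $N\subseteq\Frat(G)$ is automatic under these hypotheses. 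Kummer theory then identifies $L$ with the fixed field of the maximal $\theta$-abelian quotient $\pi^{\ab}_{G,\theta}\colon G\to\calG=G(\theta)$, giving $\Gal(L/\K)\cong\calG$ and a canonical isomorphism $G_L(\ell)\cong N$. Thus the conjecture reduces exactly to the freeness of $N$, i.e. to the Bogomolov property of $(G,\theta)$; and since a pro-$\ell$ group is free precisely when its cohomological dimension is at most $1$, this is the single statement $H^2(N,\F_\ell)=0$, equivalently $\cd_\ell(L)\leq1$.

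Two complementary routes present themselves. The first argues directly with Brauer groups: by Merkurjev--Suslin, $H^2(M,\F_\ell)$ for any finite extension $M/L$ inside the pro-$\ell$ tower is generated by symbols $\{a,b\}$, and any symbol with $a\in\K^\times$ vanishes because $a$ is infinitely $\ell$-divisible in $M^\times$. The second route, which I would pursue, exploits the norm residue isomorphism theorem of Voevodsky--Rost: it guarantees that $H^\bullet(G,\F_\ell)$ is $H^\bullet$-quadratic, so that Theorem~\ref{thm:two} applies and reduces the Bogomolov property of $(G,\theta)$ to the injectivity of the single differential $d_2^{2,1}\colon H^2(\calG,H^1(N,\F_\ell))\to H^4(\calG,\F_\ell)$ in the Hochschild--Serre spectral sequence of $1\to N\to G\to\calG\to1$. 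The whole conjecture is then concentrated in one linear-algebraic statement about the explicit group $\calG$.

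The crux, and the step I expect to be the main obstacle, is the injectivity of $d_2^{2,1}$. The advantage of passing to $\calG=G(\theta)$ is that it is a $\theta$-abelian, essentially metabelian group whose cohomology is completely computable and of Koszul type: $H^1(\calG,\F_\ell)=H^1(G,\F_\ell)$ and $H^\bullet(\calG,\F_\ell)$ is an exterior-like algebra twisted by the orientation. My intention is to recast $d_2^{2,1}$ as a syzygy map in the Koszul complex attached to the quadratic algebra $H^\bullet(G,\F_\ell)$, with $H^1(N,\F_\ell)$ playing the role of the degree-one relations, and to deduce injectivity from exactness of that complex. Concretely this amounts to showing that no new relation enters $H^2(G,\F_\ell)$ beyond those forced by the quadratic presentation — equivalently, in the Brauer-group picture, that the symbols with both entries in $L^\times\setminus\K^\times$ adjoined along the tower contribute nothing. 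I do not expect this to be formal: it is exactly the content of the Koszulity of Galois cohomology (Positselski's conjecture), and it is here that the genuine difficulty resides and where the implication from Efrat's Elementary Type Conjecture recorded in Theorem~\ref{thm:one} enters.

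As auxiliary leverage I would use that $(G,\theta)$ is $1$-smooth, every maximal pro-$\ell$ Galois group being so, whence the Kummerian and quadratic structure is inherited by the open subgroups and by the finite layers $\K_n=\K(\sqrt[\ell^n]{\K^\times})$ of the tower $L=\bigcup_n\K_n$. The hope is to verify the vanishing of triple Massey products and a mild presentation at each finite layer and to pass to the limit, freeness of $N$ being detected on this cofinal system. This ``going up the Kummer tower'' strategy is precisely what renders the special cases of $\K$ local, global, or of finite $\ell$-cohomological dimension tractable, and it furnishes the evidence that the injectivity of $d_2^{2,1}$ should hold in general.
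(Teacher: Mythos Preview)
The statement you were given is Conjecture~\ref{conj:bogo_intro}, an \emph{open conjecture}: the paper does not prove it and contains no proof to compare your proposal against. What the paper does is (i) reformulate the conjecture as the Bogomolov property of $(G_\K(\ell),\ttheta_{\K,\ell})$ (Remark~\ref{rem:Bogoreform}), (ii) prove the conjecture conditionally on Efrat's Elementary Type Conjecture when $\K^\times/(\K^\times)^\ell$ is finite (Theorem~\ref{thm:one}, Corollary~\ref{cor:fields}), and (iii) give the cohomological criterion Theorem~\ref{thm:two}. Your write-up is not a proof either, and to your credit you say so explicitly: you reduce the conjecture first to $H^2(N,\F_\ell)=0$ and then, via Theorem~\ref{thm:two}, to the injectivity of $d_2^{2,1}$, and you acknowledge that this last step is ``exactly the content of the Koszulity of Galois cohomology (Positselski's conjecture)''. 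That is an honest assessment of where the problem stands, and it mirrors the paper's own Remark~\ref{rem:posit}; but it means there is no genuine argument here to evaluate.

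Two specific comments on the sketch. First, your ``direct Brauer group route'' is not a route: killing symbols $\{a,b\}$ with $a\in\K^\times$ because $a$ becomes $\ell$-divisible in $L$ is immediate, but the whole difficulty is with symbols whose entries lie in $L^\times\smallsetminus\K^\times$, and nothing you wrote touches those (you concede this later, but then the first paragraph of ``route one'' should not be presented as if it gets anywhere). Second, invoking Theorem~\ref{thm:two} gives only an \emph{equivalence} between the Bogomolov property and injectivity of $d_2^{2,1}$, so restating the conjecture as ``$d_2^{2,1}$ is injective'' is a reformulation, not progress; your subsequent plan to deduce injectivity from Koszul exactness is precisely Positselski's theorem \cite[Thm.~1.4]{pos:k}, which is again conditional on an unproved Koszulity hypothesis. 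In short: the reductions you record are correct and coincide with the paper's framework, but no step in your proposal goes beyond reformulating one open conjecture as another.
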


A profinite group $G$ admits a maximal pro-$\ell$ quotient $G(\ell)=G/O^\ell(G)$, where
$O^\ell(G)$ is the closed normal subgroup of $G$ being generated by all pro-$q$ Sylow  subgroups for all prime numbers $q\not=\ell$. 
Apart from $\kernel(\theta)$, an oriented pro-$\ell$ group $(G,\theta)$ contains the distinguished closed subgroups
\begin{align}
K_\theta(G)&=\cl\left(\left\langle\: h^{-\theta(g)}ghg^{-1}\:\mid\: g\in G,h\in\kernel(\theta)
\:\right\rangle\right)\label{eq:defK2}\\
\intertext{and}
I_\theta(G)&=\cl\left(\left\langle\:h\in\kernel(\theta)\:\mid\: \exists k\in\dbN_0: h^{\ell^k}\in K_\theta(G)
\:\right\rangle\right)\label{eq:defItheta}
\end{align}
--- the former introduced in \cite{eq:kummer} --- i.e., $I_\theta(G)$ is the closure of the isolator (cf. \cite[\S 66]{kur:grp}) of $K_\theta(G)$ in $\kernel(\theta)$.
An oriented pro-$\ell$ group $(G,\theta)$ is said to be {\sl $\theta$-abelian}, if 
{the subgroup $K_{\theta}(G)$ is trivial and if
$\ker(\theta)$ is a free abelian pro-$\ell$ group} (in this case $G$ is a free abelian-by-cyclic pro-{$\ell$} group 
{for $\ell\not=2$}, cf. Remark~\ref{rem:thetabel}).
By definition $K_\theta(G)$ is a closed normal subgroup of $G$ contained in the Frattini subgroup 
$\Phi(G)=\cl(G^\ell\cdot[G,G])$
of $G$. Note that
\begin{equation}
\label{eq:defK3}
[\kernel(\theta),\kernel(\theta)]\subseteq K_\theta(G)\subseteq\kernel(\theta),
\end{equation}
so that the quotient $\kernel(\theta)/K_\theta(G)$ is an abelian pro-$\ell$ group, and $I_{\theta}(G)/K_\theta(G)$ is its torsion subgroup.
In particular, if $\theta\colon G\to\Z_\ell^\times$ is trivial (i.e., $\theta$ is identically equal to 1), then 
$K_\theta(G)$ coincides with the closure of the commutator subgroup of $G$.

Every oriented pro-$\ell$ group $(G,\theta)$ admits a maximal $\theta$-abelian quotient
$(G(\theta),\batheta)$, where $G(\theta)=G/I_\theta(G)$ and $\batheta\colon G(\theta)\to\Z_\ell^\times$ is the homomorphism induced by $\theta$.
Namely, $(G(\theta),\batheta)$ is $\batheta$-abelian and one has a canonical surjective homomorphism 
$$\pi^{\ab}_{G,\theta}\colon (G,\theta)\longrightarrow (G(\theta),\batheta)$$ of oriented pro-$\ell$ groups satisfying the following: for every homomorphism $\psi\colon (G,\theta)\to (A,\theta^\circ)$
of oriented pro-$\ell$ groups onto a $\theta^\circ$-abelian pro-$\ell$ group $(A,\theta^\circ)$ there exists a unique
homomorphism of oriented pro-$\ell$ groups $\psi^{\ab}_\theta\colon (G(\theta),\batheta)\to (A,\theta^\circ)$
such that $\psi=\psi^{\ab}_\theta\circ\pi^{\ab}_{G,\theta}$ (cf. Proposition~\ref{prop:maxTh}).

The hypothesis of Conjecture~\ref{conj:bogo_intro} on the primitive $\ell^{\mathrm{th}}$-roots lying in $\K$ implies that the maximal pro-$\ell$ quotient 
$G_{\K}(\ell)$
of the absolute Galois group $G_{\K}$ carries naturally an $\ell$-orientation 
\begin{equation}
\label{eq:cycchar}
\ttheta_{\K,\ell}\colon G_{\K}(\ell)\longrightarrow\Z_\ell^\times.
\end{equation} 
So, Conjecture~\ref{conj:bogo_intro} predicts that
$I_{\ttheta_{\K,\ell}}(G_\K(\ell))$
is a free pro-$\ell$ group contained in the Frattini subgroup $\Phi(G_\K(\ell))$ of $G_\K(\ell)$ (cf. Proposition~\ref{prop:kummer} and \S~\ref{ssec:bogo}).
At this point it should be mentioned that in fact one has to deal with two properties of oriented pro-$\ell$ groups.
The oriented pro-$\ell$ group $(G,\theta)$ is said to be {\sl Kummerian}, if $I_{\theta}(G)=\kernel(\pi^{\ab}_{G,\theta})$
is contained in the Frattini subgroup $\Phi(G)$ of $G$. This property can be reformulated in several different ways
(cf. Proposition~\ref{prop:kummer}).
Bearing this fact in mind we say that the Kummerian (cf. \S \ref{ss:kummer}) oriented pro-$\ell$ group $(G,\theta)$ has the {\sl Bogomolov-Positselski property}, if
$I_\theta(G)=\kernel(\pi^{\ab}_{G,\theta})$ is a free pro-$\ell$ group. 
E.g., the oriented pro-$\ell$ group $(G,\bone)$, where $\bone$ is the trivial $\ell$-orientation on $G$, is Kummerian if, and only if, the maximal abelian pro-$\ell$ quotient $G^{\ab}=G/G^\prime$ is a free abelian pro-$\ell$ group, and has the Bogomolov-Positselski property if, and only if, it is Kummerian and the closure of the commutator subgroup of $G$ is a free pro-$\ell$ group.

The class of oriented pro-$\ell$ groups $\ET_\ell$ of {\sl elementary type}  is the smallest class of oriented pro-$\ell$ groups
containing $\Z_\ell$ with all its $\ell$-orientations, all Demushkin pro-$\ell$ groups with their natural $\ell$-orientation
(cf. \cite[Proposition~5.2]{qw:cyclotomic}) and which is closed with respect to free products 
in the category of oriented pro-$\ell$ groups and fibre
products (cf. \S~\ref{ssec:ETC}). The Elementary Type Conjecture formulated by Ido Efrat in \cite{efrat:small} predicts that for every field $\K$ containing an $\ell^{\mathrm{th}}$-root of unity (and also $\sqrt{-1}$ if $\ell=2$) satisfying
$|\K^\times/(\K^\times)^\ell|<\infty$ the oriented pro-$\ell$ group $(G_\K(\ell),\ttheta_{\K,\ell})$  must be of elementary type. 
The first main purpose of this paper is to establish the following theorem relating the Elementary Type Conjecture
with Conjecture~\ref{conj:bogo_intro}.

\begin{thm}\label{thm:one}
Every oriented pro-$\ell$ group of elementary type has the Bogomolov-Posit\-selski property.
\end{thm}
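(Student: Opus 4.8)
The natural strategy is to argue by induction on the construction of the class $\ET_\ell$. Recall first that every oriented pro-$\ell$ group of elementary type is Kummerian — this is immediate for the generators $\Z_\ell$ and the Demushkin pro-$\ell$ groups (cf.\ \cite[Prop.~5.2]{qw:cyclotomic}), and it is preserved under free products and under the fibre product operation of \S\ref{ssec:ETC}, as one checks using Proposition~\ref{prop:kummer}. Hence, by the very definition of the Bogomolov property, what has to be proved is that $I_\theta(G)=\kernel(\pi^{\ab}_{G,\theta})$ is a \emph{free} pro-$\ell$ group whenever $(G,\theta)$ is of elementary type. Since for pro-$\ell$ groups freeness is equivalent to cohomological dimension $\le 1$, and closed subgroups of free pro-$\ell$ groups are again free, it suffices to handle the four building blocks separately: (a) $\Z_\ell$ with an arbitrary $\ell$-orientation, (b) Demushkin pro-$\ell$ groups with their natural orientation, (c) free products, and (d) fibre products.

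For (a), $(\Z_\ell,\theta)$ is $\theta$-abelian, so $I_\theta(\Z_\ell)=\triv$ and there is nothing to prove. For (b), let $(G,\theta)$ be a Demushkin pro-$\ell$ group with its natural orientation. Then $G$ is a Poincar\'e duality pro-$\ell$ group of dimension $2$, while $G(\theta)$ is infinite, so $I_\theta(G)$ is a closed subgroup of infinite index of a $\mathrm{PD}^2$ pro-$\ell$ group; consequently $\cd(I_\theta(G))\le 1$, i.e. $I_\theta(G)$ is free pro-$\ell$.

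For (c), write $(G,\theta)=(G_1,\theta_1)\amalg(G_2,\theta_2)$, so that $G=G_1\amalg G_2$ is a free pro-$\ell$ product with $\theta|_{G_i}=\theta_i$. The decisive point is the identity $I_\theta(G)\cap G_i=I_{\theta_i}(G_i)$ for $i=1,2$: the composite $G_i\hookrightarrow G\xrightarrow{\pi^{\ab}_{G,\theta}}G(\theta)$ lands in a $\bar\theta$-abelian group, hence factors through $G_i(\theta_i)$, and the structure of the maximal $\theta$-abelian quotient of a free product shows that $G_i(\theta_i)\to G(\theta)$ is injective — one identifies $G(\theta)$ with $\Z_\ell^{(X_1\sqcup X_2)}\rtimes\langle\image(\theta_1),\image(\theta_2)\rangle$, into which each $G_i(\theta_i)\isom\Z_\ell^{(X_i)}\rtimes\image(\theta_i)$ embeds as the evident sub-object. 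Therefore, when the normal subgroup $I_\theta(G)$ acts on the standard pro-$\ell$ tree attached to the free product decomposition of $G$, its vertex stabilizers are the conjugates of $I_\theta(G)\cap G_i=I_{\theta_i}(G_i)$ and its edge stabilizers are trivial; by the pro-$\ell$ version of Bass--Serre theory $I_\theta(G)$ is then the fundamental pro-$\ell$ group of a graph of pro-$\ell$ groups with trivial edge groups and vertex groups $I_{\theta_i}(G_i)$, which are free by the inductive hypothesis, and such a group is free pro-$\ell$.

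For (d), the fibre product construction attaches to $(G,\theta)$ the oriented pro-$\ell$ group $(\tilde G,\tilde\theta)$ with $\tilde G=\Z_\ell^{\,n}\rtimes_\theta G$ ($G$ acting on $\Z_\ell^{\,n}$ through $\theta$), $\tilde\theta$ trivial on $\Z_\ell^{\,n}$ and equal to $\theta$ on $G$. Since $\theta$ factors through $G(\theta)$, the group $\Z_\ell^{\,n}\rtimes_\theta G(\theta)$ is again $\tilde\theta$-abelian and a quotient of $(\tilde G,\tilde\theta)$; comparing with the universal $\tilde\theta$-abelian quotient and using that $G(\theta)$ is Hopfian, one finds that $\Z_\ell^{\,n}$ embeds into $\tilde G(\tilde\theta)$ with $\tilde G(\tilde\theta)/\Z_\ell^{\,n}\isom G(\theta)$; hence the canonical projection $\tilde G\to G=\tilde G/\Z_\ell^{\,n}$ restricts to an isomorphism $I_{\tilde\theta}(\tilde G)\xrightarrow{\ \sim\ }I_\theta(G)$, which is free pro-$\ell$ by the inductive hypothesis, completing the induction. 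The main obstacle I anticipate is step (c): one must develop just enough of the structure theory of maximal $\theta$-abelian quotients to secure the identity $I_\theta(G)\cap G_i=I_{\theta_i}(G_i)$, and then apply the pro-$\ell$ Bass--Serre machinery in the correct generality (trivial edge groups together with free vertex groups forcing freeness of the fundamental group); step (d) is more routine but requires care with the interaction between the new abelian normal factor and the $\tilde\theta$-abelianization, and step (b) rests on the (known) Kummerianity of Demushkin groups and the $\mathrm{PD}^2$ subgroup theorem.
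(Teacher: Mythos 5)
Your proposal is correct and follows essentially the same route as the paper: induction over the generating operations of $\ET_\ell$, freeness of infinite-index closed subgroups of Demushkin groups for the base case, the action of $K_\theta(G)$ on the standard pro-$\ell$ tree (trivial edge stabilizers, vertex stabilizers conjugate to $K_{\theta_i}(G_i)$, then Melnikov's decomposition theorem) for free products, and the identity $K_{\tilde\theta}(A\rtimes G)=K_\theta(G)$ for the semidirect (``fibre'') product step. The only cosmetic differences are that the paper obtains the injectivity of $G_i(\theta_i)\to G(\theta)$ from a Frattini-quotient argument (Corollary~\ref{cor:inj}) rather than from an explicit description of $G(\theta)$, and treats the free-product case as the special instance $U=\{1\}$ of the more general amalgamated-product result (Theorem~\ref{thm:amalg bogo}).
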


From Theorem~\ref{thm:one} one concludes the following (cf. Proposition~\ref{prop:fields ET}):

\begin{cor}\label{cor:fields}
 Let $\K$ be a field containing a primitive $\ell^{\mathrm{th}}$-root of 1 {\rm (}and also $\sqrt{-1}$ if $\ell=2${\rm)}, such that the quotient $\K^\times/(\K^\times)^\ell$ is finite. 
 Then Conjecture~\ref{conj:bogo_intro} holds true in the following cases:
 \begin{itemize}
  \item[(a)] $\K$ is finite;
\item[(b)] $\K$ is a pseudo algebraically closed (PAC) field, or an extension of relative trascendence degree 1 of a PAC field;
\item[(c)] $\K$ is an extension of trascendence degree 1 of a local field;
\item[(d)] $\K$ is $\ell$-rigid {\rm (}for the definition of $\ell$-rigid field see \cite[p.~722]{ware}{\rm )};
\item[(e)] $\K$ is an algebraic extension of a global field of characteristic not $\ell$.
 \end{itemize}
\end{cor}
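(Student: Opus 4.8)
The plan is to deduce Corollary~\ref{cor:fields} from Theorem~\ref{thm:one} by first translating the field-theoretic assertion of Conjecture~\ref{conj:bogo_intro} into the group-theoretic Bogomolov property, and then invoking the known validity of the Elementary Type Conjecture in each of the listed cases. For the translation I would use Proposition~\ref{prop:kummer} together with Remark~\ref{rem:Bogoreform}: under the standing hypotheses on $\K$ (namely $\mu_\ell\subseteq\K$, and $\sqrt{-1}\in\K$ when $\ell=2$) Kummer theory identifies $\sqrt[\ell^\infty]{\K}$ with the fixed field of $I_{\ttheta_{\K,\ell}}(G_\K(\ell))=\kernel(\pi^{\ab}_{G_\K(\ell),\ttheta_{\K,\ell}})$, so that the maximal pro-$\ell$ Galois group of $\sqrt[\ell^\infty]{\K}$ is precisely $I_{\ttheta_{\K,\ell}}(G_\K(\ell))$, and moreover $(G_\K(\ell),\ttheta_{\K,\ell})$ is automatically Kummerian. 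Hence Conjecture~\ref{conj:bogo_intro} holds for such a field $\K$ if, and only if, $(G_\K(\ell),\ttheta_{\K,\ell})$ has the Bogomolov property; this is the content of Proposition~\ref{prop:fields ET}.

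Given this reduction, Theorem~\ref{thm:one} shows that it suffices to verify, for each of the cases (a)--(g), that the oriented pro-$\ell$ group $(G_\K(\ell),\ttheta_{\K,\ell})$ lies in $\ET_\ell$. Here I would merely assemble the relevant results from the literature. In case (a) one has $G_\K(\ell)\cong\Z_\ell$ with its cyclotomic orientation, a generator of $\ET_\ell$. In case (b) the maximal pro-$\ell$ Galois group of a PAC field is free pro-$\ell$ (its absolute Galois group being projective), hence --- with whatever $\ell$-orientation --- a free pro-$\ell$ product of copies of $\Z_\ell$ and so of elementary type; the transcendence-degree-one case over a PAC field is again known to be of elementary type. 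Cases (c) and (f) follow from the verified Elementary Type Conjecture for algebraic extensions of $\Q$, respectively (for $\ell=2$) for algebraic extensions of global fields of characteristic not $2$; case (d) follows from its verification for transcendence-degree-one extensions of local fields; case (e) holds because Ware's structure theorem describes the Galois group of an $\ell$-rigid field in a form that manifestly belongs to $\ET_\ell$; and case (g) follows because the formal Laurent series construction carries a pro-$\ell$ group of elementary type to one of elementary type, so that if $(G_\Bbbk(\ell),\ttheta_{\Bbbk,\ell})\in\ET_\ell$ --- equivalently, by the reduction above, if Conjecture~\ref{conj:bogo_intro} holds for $\Bbbk$ --- then $(G_{\Bbbk(\!(T)\!)}(\ell),\ttheta)\in\ET_\ell$ as well. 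Throughout, the standing assumption $|\K^\times/(\K^\times)^\ell|<\infty$ keeps $G_\K(\ell)$ finitely generated, which is what allows these free and fibre product descriptions to stay inside $\ET_\ell$.

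The effort here is essentially bookkeeping: the single genuinely new ingredient is Theorem~\ref{thm:one}, and the main ``obstacle'' is bibliographic --- locating, for cases (b)--(g), precise references establishing membership in $\ET_\ell$, and checking in each that the $\ell$-orientation under which the group is recognized as being of elementary type coincides with the cyclotomic orientation $\ttheta_{\K,\ell}$, so that Proposition~\ref{prop:fields ET} applies verbatim. No new estimates or constructions are required beyond Theorem~\ref{thm:one} itself.
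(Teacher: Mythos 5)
Your proposal follows the paper's own route exactly: reduce Conjecture~\ref{conj:bogo_intro} to the Bogomolov property of $(G_\K(\ell),\ttheta_{\K,\ell})$ via Remark~\ref{rem:Bogoreform}, then combine Theorem~\ref{thm:one} with Proposition~\ref{prop:fields ET}, whose role is precisely to certify membership in $\ET_\ell$ for cases (a)--(g) by citation. The only slip is attributional: the equivalence ``Conjecture~\ref{conj:bogo_intro} for $\K$ $\Leftrightarrow$ Bogomolov property for $\calG_{\K,\ell}$'' is the content of Remark~\ref{rem:Bogoreform} (via Theorem~\ref{thm:galois} and Proposition~\ref{prop:kummer}), not of Proposition~\ref{prop:fields ET}, which instead supplies the elementary-type verifications.
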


By the Norm Residue Theorem (cf. \cite{HW:book,voev,weibel,weibel2}), the mod $\ell$-Milnor $K$-ring $K_\bullet^M(\K)/\ell$ of a field $\K$ is isomorphic to the cohomology algebra $H^\bullet(G_{\K}(\ell),\F_\ell)$ provided $\ell\not=\chr(\K)$
and $\K$ contains a primitive $\ell^{\mathrm{th}}$-root of unity.
Moreover, L.~Positselski showed in \cite[Theorem~1.4]{pos:k} that Conjecture~\ref{conj:bogo_intro}
is a consequence of a strong Koszulity property of the cohomology algebra $H^\bullet(G_{\K}(\ell),\F_\ell)$.

Our second objective is to establish the following criterion ensuring the Bogomolov-Positselski property
of an abstract oriented pro-$\ell$ group $(G,\theta)$. Surprisingly, it only depends on low-dimensional
group cohomology, but in a sophisticated way (cf. Theorem~\ref{prop:bogo2}).

\begin{thm}
\label{thm:two}
Let $(G,\theta)$ be a Kummerian oriented pro-$\ell$ group with a quadratic $\F_\ell$-cohomo\-logy algebra $H^\bullet(G,\F_\ell)$, and let
\begin{equation}
\label{eq:seq}
\bos\colon\qquad\xymatrix{{\triv}\ar[r]&I_\theta(G)\ar[r]&G\ar[r]&G(\theta)\ar[r]&{\triv}}
\end{equation}
be the canonical extension of pro-$\ell$ groups. 
Then $G$ has the Bogomolov-Positselski property if, and only if, the transgression map
\begin{equation}
\label{eq:trans1}
 d_2^{2,1}\colon H^2(G(\theta),H^1(I_\theta(G),\F_\ell))\longrightarrow H^4(G(\theta),\F_\ell)
\end{equation}
is injective.
\end{thm}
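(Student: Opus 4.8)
The plan is to analyze the Hochschild--Serre spectral sequence of the extension $\bos$ in low degrees, exploiting the hypothesis that $H^\bullet(G,\F_\ell)$ is quadratic. First I would verify that $\bos$ is a Frattini pro-$\ell$ cover: Kummerianity of $(G,\theta)$ gives $K_\theta(G)=I_\theta(G)\subseteq\Phi(G)$ by Proposition~\ref{prop:kummer}, while $G(\theta)$ is $\batheta$-abelian, hence an extension of a free abelian pro-$\ell$ group $\image(\batheta-1)$ by the torsion-free abelian $Z_\batheta(G(\theta))$, and such groups are locally uniform. Since $K_\theta(G)\subseteq\Phi(G)$, the inflation map $H^1(G(\theta),\F_\ell)\to H^1(G,\F_\ell)$ is an isomorphism, and by quadraticity $H^\bullet(G(\theta),\F_\ell)\cong\Lambda_\bullet(H^1(G,\F_\ell))$ is the exterior algebra; this identifies $H^4(G(\theta),\F_\ell)$ with $\Lambda_4(H^1(G,\F_\ell))$ as in \eqref{eq:trans1}.

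Next I would use that $G$ has the Bogomolov property (i.e., $K_\theta(G)$ is free pro-$\ell$, granting Kummerianity which is already assumed) if and only if $H^2(K_\theta(G),\F_\ell)=0$. The Frattini condition $K_\theta(G)\subseteq\Phi(G)$ forces $G(\theta)$ to act trivially on $H^1(K_\theta(G),\F_\ell)=\Hom(K_\theta(G),\F_\ell)$ --- indeed $G$ acts on $K_\theta(G)/\Phi(K_\theta(G))$ through $G/K_\theta(G)\cdot\Phi(G)=\{1\}$ --- so $H^1(K_\theta(G),\F_\ell)$ is a trivial $G(\theta)$-module. In the five-term-type analysis of the spectral sequence with the differentials $d_2^{p,1}\colon H^p(G(\theta),H^1(K_\theta(G),\F_\ell))\to H^{p+2}(G(\theta),\F_\ell)$, I would show that $H^2(K_\theta(G),\F_\ell)$ injects into (in fact, using quadraticity, is controlled by) the $G(\theta)$-invariants $H^2(K_\theta(G),\F_\ell)^{G(\theta)}=E_\infty^{0,2}$, and that the edge map fits into an exact sequence whose connecting/transgression pieces are exactly $d_2^{0,2}$ and the iterated $d_3^{0,2}$. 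The key reduction: quadraticity of $H^\bullet(G,\F_\ell)$ implies (via the analysis of $E_2^{p,q}$ for small $p+q$ and the fact that $H^\bullet(G(\theta),\F_\ell)$ is Koszul, being an exterior algebra) that the only obstruction to $E_2^{0,2}=E_\infty^{0,2}=0$ survives to a single map, and that $E_2^{0,2}$ itself is detected by whether $H^2(G(\theta),\F_\ell)\to H^2(G,\F_\ell)$ is injective with the expected cokernel --- which it is, again by quadraticity. What remains is to match $d_2^{0,2}$ with $d_2^{2,1}$: this is the standard duality/transpose phenomenon for transgressions in a spectral sequence of a central-type extension with $\F_\ell$-coefficients, where $d_2^{2,1}$ on $H^2(G(\theta),H^1(K_\theta(G),\F_\ell))$ is adjoint to $d_2^{0,2}$ via the cup product pairing $H^1(K_\theta(G),\F_\ell)\times H^1(K_\theta(G),\F_\ell)\to H^2(K_\theta(G),\F_\ell)$ being determined by the module structure.

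Concretely, I would argue: $K_\theta(G)$ is free pro-$\ell$ $\iff$ the cup product $H^1(K_\theta(G),\F_\ell)\wedge H^1(K_\theta(G),\F_\ell)\to H^2(K_\theta(G),\F_\ell)$ is zero with $H^2(K_\theta(G),\F_\ell)=0$; and the failure of this is recorded, through the spectral sequence and the quadraticity of $H^\bullet(G,\F_\ell)$, precisely by the failure of $d_2^{2,1}$ in \eqref{eq:trans1} to be injective. The forward direction is routine: freeness kills $H^2(K_\theta(G),\F_\ell)$, which makes every differential out of $E_r^{p,2}$ (for $r\ge2$) act on zero --- but more to the point, injectivity of $d_2^{2,1}$ will follow because its source must inject into the appropriate subquotient of $H^4(G,\F_\ell)$, which survives. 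The converse is the substance: assuming $d_2^{2,1}$ injective, I would run the spectral sequence to show $E_\infty^{0,2}=0$ forces $H^2(K_\theta(G),\F_\ell)=0$, using that the other contributions $E_\infty^{1,1}$ and $E_\infty^{2,0}$ to $H^2(G,\F_\ell)$ are already understood by quadraticity ($E_2^{1,1}$ and $E_2^{2,0}=\Lambda_2(H^1)$ survive and assemble to the quadratic $H^2(G,\F_\ell)$).

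The main obstacle will be the precise identification of the transgression $d_2^{2,1}$ with the dual of $d_2^{0,2}$ and the bookkeeping needed to conclude that $d_2^{2,1}$ injective is \emph{equivalent} to $H^2(K_\theta(G),\F_\ell)=0$ rather than merely implied by it. This requires: (i) ruling out, via quadraticity and the exterior-algebra structure on $H^\bullet(G(\theta),\F_\ell)$, any contribution to $H^2(K_\theta(G),\F_\ell)$ that dies through a \emph{higher} differential $d_3^{0,2}$ or through the failure of earlier terms to behave as expected --- i.e., showing $E_3^{0,2}=E_2^{0,2}$ so that $d_2$ is the only relevant differential; and (ii) a careful compatibility argument between the $G(\theta)$-module structure of $H^1(K_\theta(G),\F_\ell)$, the cup product on $H^\bullet(K_\theta(G),\F_\ell)$, and the multiplicative structure of the spectral sequence, so that the adjunction $\langle d_2^{2,1}(\xi),\,-\rangle = \langle \xi,\, d_2^{0,2}(-)\rangle$ holds on the nose. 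Here I would lean on the quadraticity hypothesis to guarantee $H^\bullet(K_\theta(G),\F_\ell)$ is generated in degree $1$ over the relevant range, so that vanishing of $H^2$ is governed entirely by degree-$1$ data, and on the locally-uniform structure of $G(\theta)$ to ensure the Koszulity of $H^\bullet(G(\theta),\F_\ell)$ that makes the spectral sequence degenerate in the needed positions.
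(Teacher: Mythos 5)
Your skeleton --- run the Hochschild--Serre spectral sequence of \eqref{eq:seq}, use quadraticity to control the bottom row, and reduce the Bogomolov property to the vanishing of $E_2^{0,2}=H^2(K_\theta(G),\F_\ell)^{G(\theta)}$ --- is the paper's, and the Frattini-cover part is essentially correct. But the step that is supposed to carry the equivalence fails in two places. First, $G(\theta)$ does \emph{not} act trivially on $H^1(K_\theta(G),\F_\ell)$ in general: the conjugation action of $G$ on a normal subgroup does not factor through $G/\Phi(G)$, and already for a Demushkin group the quotient $\Z_\ell^{d-1}\rtimes\Z_\ell$ acts nontrivially on $H^1(K_{\eth_G}(G),\F_\ell)$ (which has infinite rank). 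Second, the claimed adjunction $\langle d_2^{2,1}(\xi),-\rangle=\langle\xi,d_2^{0,2}(-)\rangle$ between $d_2^{2,1}$ and $d_2^{0,2}$ does not exist: these maps are \emph{composable}, $E_2^{0,2}\to E_2^{2,1}\to E_2^{4,0}$, not dual, and the correct --- and much simpler --- link is just $d_2^{2,1}\circ d_2^{0,2}=0$. Injectivity of $d_2^{2,1}$ therefore forces $d_2^{0,2}=0$ outright; no pairing argument is needed or available.

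What your proposal only gestures at is the degeneration input that makes both directions close: one needs $E_\infty^{s,t}=0$ for all $t\geq1$ and $E_3^{s,0}\simeq E_\infty^{s,0}\simeq H^s(G,\F_\ell)$. In the paper this is Proposition~\ref{lem:E}, and its source is not Koszulity of the exterior algebra $H^\bullet(G(\theta),\F_\ell)$ on the base but the fact that the \emph{abutment} $H^\bullet(G,\F_\ell)$ is generated in degree $1$, so the total complex of $E_\infty^{\bullet,\bullet}$ is generated by $E_\infty^{1,0}$ and is concentrated on the bottom row; your assertion that ``$E_2^{1,1}$ survives and assembles into $H^2(G,\F_\ell)$'' contradicts this, since $E_\infty^{1,1}=0$. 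With that lemma: in the forward direction $K_\theta(G)$ free gives $E_2^{0,2}=0$, so nothing maps into $E_r^{2,1}$ and $\kernel(d_2^{2,1})=E_3^{2,1}=E_\infty^{2,1}=0$ (note $E_\infty^{2,1}$ sits in total degree $3$, not $4$ as you suggest); in the converse, $d_2^{0,2}=0$ and $d_3^{0,2}=0$ (the latter because $E_3^{3,0}$ is already $E_\infty^{3,0}$) give $H^2(K_\theta(G),\F_\ell)^{G(\theta)}=E_2^{0,2}=E_\infty^{0,2}=0$, and then $H^2(K_\theta(G),\F_\ell)=0$ because a nonzero discrete $\ell$-torsion module over a pro-$\ell$ group has nonzero invariants --- a step your write-up also skips.
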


\begin{rem}
\label{rem:posit}\rm
As $\bos$ is a {\sl Frattini pro-$\ell$ cover} (i.e., $I_\theta(G)$ is contained in the Frattini subgroup of $G$, cf. \S~\ref{ssec:unif}), inflation yields an isomorphism $j^1\colon H^1(G(\theta),\F_\ell)\to H^1(G,\F_\ell)$.
Since $H^\bullet(G,\F_\ell)$ is quadratic, inflation may also be considered 
as a surjective homomorphism of graded $\F_\ell$-algebras
\begin{equation}
\label{eq:qinf}
j^\bullet\colon H^\bullet(G(\theta),\F_\ell)\longrightarrow H^\bullet(G,\F_\ell),
\end{equation}
where the left-side term of \eqref{eq:qinf} is the exterior algebra generated by $H^1(G(\theta),\F_\ell)$ (cf. \S~\ref{ssec:quad NRT}).
By \cite[Theorem 1.4]{pos:k},
$(G,\theta)$ has the Bogomolov-Positselski property provided $H^\bullet(G,\F_\ell)$ is a Koszul $\F_\ell$-algebra
and $\kernel(j^\bullet)$
is a Koszul $H^\bullet(G,\F_\ell)$-module (cf. \cite[\S 3.3]{pos:k}).
Hence the natural question arising in this context is, whether one can express 
$\kernel(d_2^{2,1})$ in terms
of $\Ext^{s,t}_{H^{\bullet}(G,\F_\ell)}(\F_\ell,\kernel(j^\bullet))$, $s\not=t$.
\end{rem}

\begin{acknowledgment} \rm
The authors would like to thank I.~Efrat, A.~Jaikin-Zapirain and P.~Zalesskii as well as K.~Ersoy, I.~Snopce and M.~Vannacci
for interesting and helpful discussions. The authors would also like to thank the anonymous referee for several helpful comments.
\end{acknowledgment}


\section{Oriented pro-$\ell$ groups}
\label{sec:or}


For a pro-$\ell$ group $G$ and a positive integer $n$, $G^n$ will denote the closed subgroup of $G$ generated by the $n$-th powers of all elements of $G$.
Moreover, for two elements $g,h\in G$, we set $${}^gh=ghg^{-1},\qquad\text{and}\qquad[g,h]={}^{g}h\cdot  h^{-1},$$
and for two subgroups $H_1,H_2$ of $G$, $[H_1,H_2]$ will denote the closed subgroup of $G$ generated by all commutators $[g,h]$
with $g\in H_1$ and $h\in H_2$.
In particular, $G'$ will denote the closure of the commutator subgroup of $G$. 


\subsection{$\ell$-Orientations of profinite groups}
\label{ss:or}
Let $\Z_{\ell}$ denote the ring of $\ell$-adic integers, and let $\Z_{\ell}^\times$ denote its group of units.
Note that $\Z_\ell^\times$ is a virtual pro-$\ell$ group, in more detail:
\begin{itemize}
 \item[(a)] if $\ell\neq2$ then the Sylow pro-$\ell$ subgroup of $\Z_\ell^\times$ is $1+\ell\Z_\ell=\{1+\ell\lambda\mid\lambda\in\Z_\ell\}$, which is free pro-$\ell$ cyclic;  
 \item[(b)] if $\ell=2$ then $\Z_2^\times=1+2\Z_2\simeq \Z/2\times(1+4\Z_2)$, and the factor $1+4\Z_2$ is isomorphic to $\Z_2$.
\end{itemize}

An oriented pro-$\ell$ group $(G,\theta)$ is a pro-$\ell$ group $G$ together with a continuous group homomorphism
$\theta\colon G\to\Z_\ell^\times$.
Moreover, $(G,\theta)$ is said to be  {\sl torsion-free}  if $\ell\neq2$, or if $\ell=2$ and $\image(\theta)\subseteq 1+4\Z_2$ --- observe that in a torsion-free oriented pro-$\ell$ group $(G,\theta)$, $G$ need not be a torsion free pro-$\ell$ group, e.g., $(\Z/\ell,\mathbf{1})$ is a torsion-free oriented pro-$\ell$ as $\image(\mathbf{1})=\{1\}$.

Oriented pro-$\ell$ groups where introduced by I.~Efrat in \cite{efrat:small} under the name ``cyclotomic pro-$\ell$ pairs".
For an oriented pro-$\ell$ group $(G,\theta)$, $\Z_\ell(1)$ will denote the continuous left $\ZLG$-module which is isomorphic to $\Z_\ell$ as an abelian pro-$\ell$ group, such that $g\cdot v=\theta(g)\cdot v$ for every $g\in G$ and $v\in\Z_\ell(1)$ (cf. \cite[\S~1]{qw:cyclotomic}).
Conversely, if a pro-$\ell$ group $G$ comes endowed with a continuous left $\ZLG$-module $M$ which is isomorphic to $\Z_\ell$ as an abelian pro-$\ell$ group, then $M$ induces an orientation $\theta\colon G\to\Z_\ell^\times$ by $\theta(g)\cdot v=g\cdot v$ for every $g\in G$ and $v\in M$, such that $M\simeq\Z_\ell(1)$.

The fundamental examples of oriented pro-$\ell$ groups arise in Galois theory (cf. \cite[\S~4]{eq:kummer}).

\begin{exam}\label{ex:Galois}\rm
For a field $\K$, let $\baK$ denote a separable closure of $\K$, and let $\mu_{\ell^\infty}$ denote the 
 group of roots of 1 of $\ell$-power order lying in $\baK$.
 If $\K$ contains a primitive $\ell^{\mathrm{th}}$-root of unity, then $\mu_{\ell^\infty}$ is contained in the maximal pro-$\ell$ extension
$\K(\ell)$ of $\K$.
{As $\mu_{\ell^\infty}\simeq\Z[\frac{1}{\ell}]/\Z$ and $\Aut(\Z[\frac{1}{\ell}]/\Z)$ is isomorphic to $\Z_\ell^\times$,
the action of the {\sl maximal pro-$\ell$ Galois group} $G_{\K}(\ell)=\mathrm{Gal}(\K(\ell)/\K)$ of $\K$
on $\mu_{\ell^\infty}$ fixes the primitive $\ell^{\mathrm{th}}$-roots of unity, and induces the {\sl $\ell$-cyclotomic character} 
\[
 \ttheta_{\K,\ell}\colon G_{\K}(\ell)\longrightarrow \Z_\ell^\times.
\]}
In particular,
$$\sigma(\zeta)=\zeta^{\ttheta_{\K,\ell}(\sigma)}\qquad \text{for all }\sigma\in G_{\K}(\ell),\zeta\in\mu_{\ell^\infty}.$$
Furthermore, one has $\image(\ttheta_{\K,\ell})=1+\ell^f\Z_\ell$ --- where $f$ is the positive integer satisfying
$|\mu_{\ell^\infty}\cap\K^\times|=\ell^f$ --- in case $\mu_{\ell^\infty}\cap\K^\times$ is 
{non-empty and} finite, and 
$\image(\ttheta_{\K,\ell})=\{1\}$ if $\mu_{\ell^\infty}\subseteq \K^\times$.
The continuous $G_{\K}(\ell)$-module $\Z_\ell(1)$ induced by the cyclotomic character is called the 1st Tate twist of $\Z_\ell$ (cf. \cite[Def.~7.3.6]{nsw:cohn}), and for every $n\geq1$, $\Z_\ell(1)/\ell^n$ is isomorphic to the $G_{\K}(\ell)$-module of the $\ell^n$-th roots of 1.
\end{exam}

Note that oriented pro-$\ell$ groups form a category $\Or_\ell$, i.e., for $(G,\theta), (H,\theta^\prime)\in \ob(\Or_\ell)$ a morphism of oriented pro-$\ell$ groups $\phi\colon (G,\theta)\to(H,\theta^\prime)$ is a continuous group homomorphism
$\phi\colon G\to H$ of pro-$\ell$ groups satisfying $\theta^\prime\circ\phi=\theta$.

For an oriented pro-$\ell$ group $(G,\theta)$ one has the following constructions.
\begin{itemize}
 \item[(a)] Let $N$ be a normal subgroup of $G$ such that $N\subseteq \kernel(\theta)$.
 Then one has an oriented pro-$\ell$ group $$(G,\theta)/N:=(G/N,\bar\theta),$$ where $\bar\theta\colon G/N\to\Z_\ell^\times$ is the orientation induced by $\theta$.
 \item[(b)] Let $A$ be an abelian pro-$\ell$ group.
 Then one has an oriented pro-$\ell$ group $$A\rtimes(G,\theta):=(A\rtimes G,\tilde\theta),$$
 where $gag^{-1}=a^{\theta(g)}$ for all $g\in G$ and $a\in A$, and $\tilde\theta=\theta\circ\pi$, where $\pi\colon A\rtimes G\to G$ is the canonical projection.
\end{itemize}


\subsection{The maximal $\theta$-abelian quotient of an oriented pro-$\ell$ group}
\label{ss:maxTh}
Let $(G,\theta)$ be a torsion-free oriented pro-$\ell$ group. 
Then $G/\kernel(\theta)\simeq \image(\theta)$ is torsion-free, and thus either trivial or isomorphic to $\Z_\ell$.
Therefore, the epimorphism $G\twoheadrightarrow G/\kernel(\theta)$ splits, and since $ghg^{-1}\equiv h^{\theta(g)}\bmod K_\theta(G)$ for every $g\in G$ and $h\in\kernel(\theta)$, one concludes that
\begin{equation}\label{eq:K split}
 (G,\theta)/K_\theta(G)\simeq \frac{\kernel(\theta)}{K_\theta(G)}\rtimes (\image(\theta),\mathrm{Id}_{\image(\theta)}).
\end{equation}

\begin{rem}\label{rem:thetabel}\rm
 By \eqref{eq:K split}, if $(G,\theta)$ is a torsion-free $\theta$-abelian oriented pro-$\ell$ group, then it is isomorphic to the oriented pro-$\ell$ group
 $\kernel(\theta)\rtimes (\image(\theta),\mathrm{Id}_{\image(\theta)})$.  
 Conversely, if $A$ is a free abelian pro-$\ell$ group, and $(\bar G,\theta)$ is an oriented pro-$\ell$ group 
 {satisfying
 $\kernel(\theta)=\triv$,} then the oriented pro-$\ell$ group $(G,\tilde\theta)=A\rtimes(\bar G,\theta)$ is $\tilde\theta$-abelian, since $\kernel(\tilde\theta)=A$ is a free abelian pro-$\ell$ group, and 
 {as $ghg^{-1}=h^{\tilde\theta(g)}$ for every $g\in\bar G$ and $h\in A$ and thus} $K_{\tilde\theta}(G)=\{1\}$. 
\end{rem}

Let $(G,\theta)$ be an oriented pro-$\ell$ group. 
{Put
$\bar{G}=G/I_\theta(G)$ and let $\bar\theta\colon\bar{G}\to\Z_\ell^\times$ denote the induced orientation.
Since the quotient $\kernel(\theta)/I_\theta(G)$ is torsion-free (cf. \S~\ref{sec:intro}), the oriented pro-$\ell$ group $(G(\theta),\bar\theta)$ is $\bar\theta$-abelian}.
This group together with the canonical projection
\begin{equation}
\label{eq:Thab}
\pi^{\ab}_{G,\theta}\colon G\longrightarrow G(\theta)
\end{equation}
has the following universal property.

\begin{prop}
\label{prop:maxTh}
Let $(G,\theta)$ be an oriented pro-$\ell$ group,
let $(A,\theta^\circ)$ be an oriented $\theta^\circ$-abelian pro-$\ell$
group, and let $\psi\colon (G,\theta)\to (A,\theta^\circ)$ 
be a continuous homomorphism of
oriented pro-$\ell$ groups. 
Then $\psi$ factors through $\pi^{ab}_{G,\theta}$, i.e., there exists a \textup{(}unique\textup{)} continuous
group homomorphism $$\psi^{\ab}_{G,\theta}\colon (G(\theta),\batheta)\longrightarrow (A,\theta^\circ)$$
satisfying $\psi=\psi^{\ab}_{G,\theta}\circ\pi^{\ab}_{G,\theta}$.
\end{prop}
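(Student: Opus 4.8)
The plan is to reduce everything to a single containment, namely $I_\theta(G)\subseteq\kernel(\psi)$. Once that is in hand, the existence of a continuous $\psi^{\ab}_{G,\theta}\colon G(\theta)\to A$ with $\psi=\psi^{\ab}_{G,\theta}\circ\pi^{\ab}_{G,\theta}$ follows from the universal property of the quotient $\pi^{\ab}_{G,\theta}\colon G\to G/I_\theta(G)=G(\theta)$ in the category of pro-$\ell$ groups (the induced map is automatically continuous for the quotient topology), while the uniqueness of $\psi^{\ab}_{G,\theta}$ and the fact that it intertwines the orientations both follow from the surjectivity of $\pi^{\ab}_{G,\theta}$: indeed $\batheta\circ\pi^{\ab}_{G,\theta}=\theta=\theta^\circ\circ\psi=(\theta^\circ\circ\psi^{\ab}_{G,\theta})\circ\pi^{\ab}_{G,\theta}$ forces $\batheta=\theta^\circ\circ\psi^{\ab}_{G,\theta}$, and likewise any two factorisations agree on the image of $\pi^{\ab}_{G,\theta}$, i.e. everywhere.

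To prove $I_\theta(G)\subseteq\kernel(\psi)$ I would first check that $K_\theta(G)\subseteq\kernel(\psi)$. Since $\psi$ is continuous and $\kernel(\psi)$ is closed (as $A$ is Hausdorff), it suffices to send each generator $h^{-\theta(g)}ghg^{-1}$ of $K_\theta(G)$, with $g\in G$ and $h\in\kernel(\theta)$, to $1$. For such an $h$ one has $\theta^\circ(\psi(h))=\theta(h)=1$, so $\psi(h)\in\kernel(\theta^\circ)=Z_{\theta^\circ}(A)$ because $(A,\theta^\circ)$ is $\theta^\circ$-abelian; hence $\psi(g)\psi(h)\psi(g)^{-1}=\psi(h)^{\theta^\circ(\psi(g))}=\psi(h)^{\theta(g)}$, and the generator maps to $\psi(h)^{-\theta(g)}\psi(h)^{\theta(g)}=1$. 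Next, recall that $I_\theta(G)$ is the closure of the subgroup of $\kernel(\theta)$ generated by those $g\in\kernel(\theta)$ admitting some $k\in\dbN_0$ with $g^{\ell^k}\in K_\theta(G)$; again by closedness of $\kernel(\psi)$ it is enough to kill each such $g$. For one of them we have $\psi(g)\in\kernel(\theta^\circ)$ (from $g\in\kernel(\theta)$ and $\theta=\theta^\circ\circ\psi$) and $\psi(g)^{\ell^k}=\psi(g^{\ell^k})=1$ by the previous step; but $\kernel(\theta^\circ)$ is torsion-free, since $(A,\theta^\circ)$ is $\theta^\circ$-abelian, so $\psi(g)=1$. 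This yields $I_\theta(G)\subseteq\kernel(\psi)$ and completes the argument.

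There is no serious obstacle here: the whole content is the observation that the two defining features of a $\theta^\circ$-abelian oriented pro-$\ell$ group --- that $\kernel(\theta^\circ)$ coincides with the $\theta^\circ$-center and that it is torsion-free --- are exactly what is needed to annihilate, respectively, the generators of $K_\theta(G)$ and the $\ell$-power roots defining its isolator $I_\theta(G)$. The only point calling for a little care is the passage from generators to the closed subgroups they generate, which is legitimate precisely because $\psi$ is continuous and $A$ is Hausdorff.
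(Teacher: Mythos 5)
Your proposal is correct and follows essentially the same route as the paper: you show $\psi$ kills the generators of $K_\theta(G)$ using that $\kernel(\theta^\circ)$ equals the $\theta^\circ$-center of $A$, and then pass to $I_\theta(G)$ using torsion-freeness of $\kernel(\theta^\circ)$, which is exactly the paper's two-step argument ($\psi(K_\theta(G))\subseteq K_{\theta^\circ}(A)=\{1\}$, then $\psi(I_\theta(G))=\{1\}$). The extra details you supply on continuity, uniqueness, and compatibility of orientations are correct and merely spell out what the paper leaves implicit.
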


\begin{proof}
As $\psi$ is a homomorphism of oriented pro-$\ell$ groups, and as 
$(A,\theta^\circ)$ is $\theta^\circ$-abelian, one has
\begin{equation}
\label{eq:max1}
\psi(\kernel(\theta))\subseteq\kernel(\theta^\circ) \qquad\text{and}\qquad
\psi(K_\theta(G))\subseteq K_{\theta^\circ}(A)=\triv.
\end{equation}
As $\kernel(\theta^\circ)$ is torsion-free, this implies that $\psi(I_\theta(G))=\triv$.
Hence the induced homomorphism $\psi^{\ab}_{G,\theta}\colon G(\theta)\to A$
of oriented pro-$\ell$ groups has the desired properties.
\end{proof}




\begin{rem}\label{rem:pthetabel subgroups}\rm
Let $(G,\theta)\simeq A\rtimes ((G,\theta)/\kernel(\theta))$ be a torsion-free $\theta$-abelian oriented pro-$\ell$ group.
Then for every subgroup $H$ of $G$ one has $$H\simeq (H\cap A)\rtimes (H/\kernel(\theta\vert_H)),$$ and thus the oriented pro-$\ell$ group $(H,\theta\vert_H)$ is split $\theta\vert_H$-abelian (cf. \cite[Remark~3.12]{qw:cyclotomic}).
\end{rem}


\subsection{Kummerian oriented pro-$\ell$ groups}
\label{ss:kummer}
Let $(G,\theta)$ be an oriented torsion-free pro-$\ell$ group.
Since $\image(\theta)\subseteq1+\ell\Z_\ell$, the action of $G$ on the quotient $\Z_\ell(1)/\ell$ of the continuous $G$-module $\Z_\ell(1)$ is trivial, i.e., $\Z_\ell(1)/\ell\simeq \F_\ell$ as a trivial left $\ZLG$-module.
{In the proof of the subsequent proposition we will make use of the following}
\begin{fact}
\label{fact:dir}
Let $A$ be an abelian pro-$\ell$ group, and let $B$ be a closed subgroup of $A$ which is a direct summand of $A$
satisfying $B\subseteq A^\ell$. Then $B=\{0\}$.
\end{fact}

\begin{proof}
Let $A=B\oplus C$. Then $A^\ell=B^\ell\oplus C^\ell$, and as $B^\ell\subseteq B$, and $B\cap C=\{0\}$ one concludes
that $B\subseteq B^\ell$, i.e., $B=B^\ell=\Phi(B)$. Hence $B=\{0\}$.
\end{proof}

A torsion-free oriented pro-$\ell$ group $(G,\theta)$ is said to be {\sl Kummerian} if the following equivalent properties are satisfied.

\begin{prop}\label{prop:kummer}
Let $(G,\theta)$ be a torsion-free oriented pro-$\ell$ group. 
Then the following are equivalent:
\begin{itemize}
 \item[(i)] the map
$H^1(G,\Z_\ell(1)/\ell^n)\longrightarrow H^1(G,\F_\ell)$
induced by the epimorphism of discrete left $G$-modules $\Z_\ell(1)/\ell^n\twoheadrightarrow\Z_\ell(1)/\ell\simeq\F_\ell$, is surjective for every $n\geq1$ \textup{(}cf. \cite{eq:kummer}\textup{)}.
 \item[(ii)] The quotient $\kernel(\theta)/K_\theta(G)$ is a free abelian pro-$\ell$ group.
 \item[(iii)] The oriented pro-$\ell$ group $(G,\theta)/K_\theta(G)=(G/K_\theta(G),\bar\theta)$ is $\bar\theta$-abelian.
\item[(iv)] $K_\theta(G)$ is isolated in $\kernel(\theta)$, i.e., $I_\theta(G)=K_\theta(G)$.
\item[(v)] The group $H_{\mathrm{cts}}^2(G,\Z_\ell(1))$ is a torsion-free $\Z_\ell$-module.
\item[(vi)] $I_{\theta}(G)\subseteq\Phi(G)$.
\end{itemize}
\textup{(}Here $H_{\mathrm{cts}}^\ast$ denotes continuous cochain cohomology as defined by J.~Tate in \cite{tate:miln}\textup{)}.
\end{prop}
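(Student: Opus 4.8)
The plan is to split the six conditions into the ``group-theoretic'' statements (ii), (iii), (iv), (vi) and the ``cohomological'' statements (i), (v), to prove the equivalences inside each group, and then to bridge the two groups through the quotient $\calG/K_\theta(G)$. Throughout I will use the elementary fact that an abelian pro-$\ell$ group is free if and only if it is torsion-free: by Pontryagin duality its dual is then a divisible torsion abelian $\ell$-group, hence a direct sum of copies of $\Z[1/\ell]/\Z$, so the group itself is a product of copies of $\Z_\ell$.

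\emph{The group-theoretic equivalences.} By \eqref{eq:defI} the quotient $I_\theta(G)/K_\theta(G)$ is exactly the torsion subgroup of the abelian pro-$\ell$ group $A:=\kernel(\theta)/K_\theta(G)$, so (ii) and (iv) are the same statement. By construction $K_{\bar\theta}(\calG/K_\theta(G))=\triv$, and by \eqref{eq:K split} one has $\calG/K_\theta(G)\simeq A\rtimes\image(\theta)$; thus Proposition~\ref{prop:max} shows that $\calG/K_\theta(G)$ is $\bar\theta$-abelian exactly when $A$ is free abelian, i.e. (iii)$\Leftrightarrow$(ii). Finally $K_\theta(G)\subseteq\Phi(G)$ gives (iv)$\Rightarrow$(vi); for the converse, $K_\theta(G)\subseteq\Phi(G)$ yields $\Phi(\calG/K_\theta(G))=\Phi(G)/K_\theta(G)$, and since $\image(\theta)\subseteq1+\ell\Z_\ell$ one has $[\image(\theta),A]\subseteq A^\ell$, hence $\Phi(A\rtimes\image(\theta))\cap A=\Phi(A)$. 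So (vi) forces the torsion subgroup of $A$ into $\Phi(A)$; dualizing, this reads $A^\vee[\ell]\subseteq\bigcap_k\ell^k A^\vee$, and a short induction on the order of elements of $A^\vee$ shows $A^\vee=\ell A^\vee$, i.e. $A$ is torsion-free, which is (iv).

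\emph{The cohomological equivalence.} For each $n\geq1$ multiplication by $\ell$ gives a short exact sequence of $\calG$-modules $0\to\Z_\ell(1)/\ell^{n-1}\to\Z_\ell(1)/\ell^n\to\F_\ell\to0$, and these are compatible with the sequence $0\to\Z_\ell(1)\xrightarrow{\ell}\Z_\ell(1)\to\F_\ell\to0$. The associated long exact cohomology sequences show that the map \eqref{eq:kum} is surjective for every $n$ if and only if the connecting homomorphism $H^1(G,\F_\ell)\to H^2_{\mathrm{cts}}(G,\Z_\ell(1))$ vanishes; since the image of this map is precisely $H^2_{\mathrm{cts}}(G,\Z_\ell(1))[\ell]$, this gives (i)$\Leftrightarrow$(v).

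\emph{The bridge, and the main obstacle.} It remains to link the two groups, for which I would prove (i)$\Leftrightarrow$(iii). The key observation is that every continuous $1$-cocycle $c$ of $G$ with values in $\Z_\ell(1)/\ell^n$, or in $\F_\ell$, vanishes on $K_\theta(G)$: the cocycle identity, together with $\theta\vert_{\kernel(\theta)}=1$, gives $c(h^{-\theta(g)}\cdot{}^{g}h)=0$ for all $g\in G$, $h\in\kernel(\theta)$, and such elements topologically generate $K_\theta(G)$. Hence restriction to $K_\theta(G)$ is zero and inflation $H^1(\calG/K_\theta(G),-)\to H^1(G,-)$ is an isomorphism for these modules, so $(G,\theta)$ is Kummerian if and only if $(\calG/K_\theta(G),\bar\theta)$ is. Now $\calG/K_\theta(G)\simeq A\rtimes\image(\theta)$ with $A=\kernel(\theta)/K_\theta(G)$ abelian, and since $\cd_\ell(\image(\theta))\leq1$ the Hochschild--Serre sequence makes the restriction maps $H^1(A\rtimes\image(\theta),-)\to H^1(A,-)^{\image(\theta)}$ surjective; unwinding this, and passing to the limit over $n$, reduces Kummerianity of $\calG/K_\theta(G)$ to surjectivity of $\Hom_{\mathrm{cts}}(A,\Z_\ell)\to\Hom_{\mathrm{cts}}(A,\F_\ell)$, which holds precisely when $A$ is torsion-free --- indeed, if $A$ had torsion it would contain a copy of $\Z/\ell$, and then $\Ext^1_{\mathrm{cts}}(A,\Z_\ell)$ would surject onto $\Ext^1_{\mathrm{cts}}(\Z/\ell,\Z_\ell)\cong\Z/\ell$, obstructing the lift. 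Together with the group-theoretic part this closes the circle. The hard part is exactly this localization step: one must show that the a priori ``global'' Kummer condition on $G$ is already detected on the single abelian quotient $A=\kernel(\theta)/K_\theta(G)$, for which the vanishing of cocycles on $K_\theta(G)$ and the bound $\cd_\ell(\image(\theta))\leq1$ are both essential, while keeping careful track of where $\ell$-torsion is permitted to enter.
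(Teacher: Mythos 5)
Your proposal is correct in its architecture and takes a genuinely different, self-contained route from the paper. The paper disposes of (i)$\Leftrightarrow$(ii)$\Leftrightarrow$(iii) and (i)$\Leftrightarrow$(v) entirely by citation (to \cite{eq:kummer}, \cite{cq:1smooth} and \cite{qw:cyclotomic}), proving only (iii)$\Leftrightarrow$(iv) via Proposition~\ref{prop:max} and (iv)$\Leftrightarrow$(vi) by a short direct-summand argument. Your bridge is the genuinely new ingredient: every continuous $1$-cocycle with values in $\Z_\ell(1)/\ell^n$ kills the generators $h^{-\theta(g)}\,{}^gh$ of $K_\theta(G)$, so inflation from $G/K_\theta(G)$ is an isomorphism on $H^1$ with these coefficients and Kummerianity is detected on $A\rtimes\image(\theta)$, where $A=\kernel(\theta)/K_\theta(G)$; there the twist makes $\image(\theta)$ act trivially on $\Hom(A,\Z_\ell(1)/\ell^n)$, and $\cd_\ell(\image(\theta))\leq1$ splits the five-term sequence, reducing everything to lifting continuous characters of the abelian group $A$. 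What your route buys is a proof readable without three external references; what the paper's route buys is brevity.

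Two steps need repair, both concerning torsion of $A$ that is invisible to continuous characters $A\to\F_\ell$. In the final lifting step, your appeal to a surjection $\Ext^1_{\mathrm{cts}}(A,\Z_\ell)\to\Ext^1_{\mathrm{cts}}(\Z/\ell,\Z_\ell)$ detects the obstruction only if some character of $A$ restricts nontrivially to the chosen copy of $\Z/\ell$, which fails if that copy lies in $\ell A$; similarly, in (vi)$\Rightarrow$(iv) your asserted equality $T^\perp=\bigcap_k\ell^kA^\vee$ is a priori only the inclusion $\supseteq$, since it presupposes that $A/T$ is torsion-free. Both are fixable by the same observation: $\bigcap_n\ell^nA=\{0\}$ for every abelian pro-$\ell$ group (dually, $A^\vee$ is a torsion group, so the annihilators $A^\vee[\ell^n]$ exhaust $A^\vee$). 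For (vi)$\Rightarrow$(iv): if $a\in A$ is torsion and lies in $T\subseteq\ell A$, write $a=\ell b$; then $b$ is again torsion, hence again in $T\subseteq\ell A$, and iterating gives $a\in\bigcap_n\ell^nA=\{0\}$. For the lifting step: surjectivity of $\Hom(A,\Z/\ell^n)\to\Hom(A,\F_\ell)$ for all $n$ says $A^\vee[\ell]=\ell^{n-1}\bigl(A^\vee[\ell^n]\bigr)$ for all $n$, which places the socle of $A^\vee$ inside the first Ulm subgroup $U=\bigcap_n\ell^nA^\vee$; one then checks that $A^\vee[\ell]\subseteq U$ forces $U$ to be divisible, hence $A^\vee$ divisible, hence $A$ torsion-free. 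Neither repair changes the structure of your argument, and the remaining steps (the cocycle computation, the five-term diagram chase, and the long-exact-sequence argument for (i)$\Leftrightarrow$(v)) are sound.
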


\begin{proof}
For $G$ finitely generated the equivalences between (i) and (ii) was shown in \cite[Thm.~5.6]{eq:kummer}, and
the equivalence between (ii) and (iii) follows from Remark~\ref{rem:thetabel}. 
For general $G$ the equivalences were shown in \cite[Thm.~1.2]{cq:1smooth}.
The equivalence between (i) and (v) is shown in \cite[Prop.~2.1]{qw:cyclotomic},
and (iii) $\Leftrightarrow$ (iv) is a direct consequence of \eqref{eq:K split} and Remark~\ref{rem:thetabel}.
Hence (i)--(v) are equivalent. As $K_\theta(G)\subseteq\Phi(G)$ one has (iv) $\Rightarrow$ (vi).
Thus it remains to show that (vi) $\Rightarrow$ (iv).
Let $\pi\colon G\to G/\Phi(G)$ denote the canonical projection, and let
\begin{equation}
\pi_\ast\colon \kernel(\theta)/K_\theta(G)\cdot\ker(\theta)^\ell\longrightarrow G/\Phi(G)
\end{equation}
denote the induced map --- note that $K_\theta(G)\ker(\theta)^\ell=\ker(\theta)^\ell[G,\ker(\theta)]$, by \eqref{eq:defK2}. 
As $\image(\theta)$ --- which is isomorphic to either $\Z_\ell$ or $\{1\}$ --- is projective, the 5-term exact sequence associated to the
Hochschild-Serre spectral sequence yields an exact sequence
\begin{equation}
\label{eq:inj2}
\xymatrix{H^1(G,\F_\ell)\ar[r]^-{\pi_\ast^\vee}&H^1(\kernel(\theta),\F_\ell)^G\ar[r]&\{0\}}
\end{equation}
Thus, by Pontrjagin duality, $\pi_\ast$ is  injective.
Note that 
\begin{equation}
\tor(\kernel(\theta)/K_\theta(G))=I_\theta(G)/K_\theta(G)
\end{equation}
is a direct summand of the abelian pro-$\ell$ group $\kernel(\theta)/K_\theta(G)$ (cf. \S~\ref{sec:intro}).
Since $\pi(I_\theta(G))=\{1\}$ by (vi), and since $\pi_\ast$ is injective, one concludes that 
{$I_\theta(G)\subseteq K_\theta(G)\cdot \kernel(\theta)^\ell$. Hence $I_\theta(G)/K_\theta(G)=\{1\}$
by Fact~\ref{fact:dir}.}
\end{proof}

\begin{exam}\label{exam:kummer groups}\rm
\begin{itemize}
 \item[(a)] If $(G,\theta)$ is a torsion-free $\theta$-abelian pro-$\ell$ group, then, by Proposition~\ref{prop:kummer}--(ii), $(G,\theta)$ is Kummerian, as $K_\theta(G)=\{1\}$ and $\kernel(\theta)$ is free abelian by definition.
 \item[(b)] If $G$ is a free pro-$\ell$ group, then by Proposition~\ref{prop:kummer}--(v) the oriented pro-$\ell$ group $(G,\theta)$ is Kummerian for any orientation $\theta\colon G\to\Z_\ell^\times$, as $\mathrm{cd}(G)=1$ (cf. \cite[Prop.~3.5.17]{nsw:cohn}).
\item[(c)] If $(G,\theta)$ is an oriented pro-$\ell$ group with trivial orientation $\theta\equiv\mathbf{1}$,
then $(G,\theta)$ is Kummerian if, and only if, the abelianization $G^{\ab}$ is a free abelian pro-$\ell$ group
(cf. \cite[Example~3.5--(1)]{eq:kummer}).
 \end{itemize}
\end{exam}

The following result is a consequence of Kummer theory (cf. \cite[Thm.~4.2]{eq:kummer}).

\begin{thm}\label{thm:galois}
 Let $\K$ be a field containing a primitive $\ell^{\mathrm{th}}$-root of 1 {\rm (}and also $\sqrt{-1}$ if $\ell=2${\rm )}.
Then $(G_{\K}(\ell),\theta_{\K,\ell})$ is a torsion-free Kummerian oriented pro-$\ell$ group.
\end{thm}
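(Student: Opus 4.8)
The plan is to check the two assertions of the statement separately: that $\calG_{\K}=(G_{\K}(\ell),\ttheta_{\K,\ell})$ is torsion-free, and --- the substantive point --- that it is Kummerian, the latter by unwinding the cohomological definition \eqref{eq:kum} through classical Kummer theory over the field $\K(\ell)$. Torsion-freeness is immediate from the shape of $\image(\ttheta_{\K,\ell})$ recorded in Example~\ref{ex:Galois}: as $\K$ contains a primitive $\ell$-th root of unity one has $f\geq1$, so $\image(\ttheta_{\K,\ell})=1+\ell^{f}\Z_\ell\subseteq1+\ell\Z_\ell$, which already suffices when $\ell\neq2$ (cf. \S\ref{ss:or}); and if $\ell=2$ the hypothesis $\sqrt{-1}\in\K$ forces $f\geq2$, so that $\image(\ttheta_{\K,2})=1+2^{f}\Z_2\subseteq1+4\Z_2$ and $\calG_{\K}$ is torsion-free in this case as well.

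For the Kummerian property set $G=G_{\K}(\ell)$ and $L=\K(\ell)$. Since $\K$ contains a primitive $\ell$-th root of unity (and $\sqrt{-1}$ if $\ell=2$) one has $\mu_{\ell^\infty}\subseteq L$, so for every $n\geq1$ there is a short exact sequence of continuous $G$-modules
\[
1\longrightarrow\mu_{\ell^n}\longrightarrow L^\times\xrightarrow{\;x\mapsto x^{\ell^n}\;}L^\times\longrightarrow1 .
\]
The only ingredient here that is not purely formal is surjectivity on the right, i.e. the $\ell^n$-divisibility of $L^\times$; I would deduce this from the maximality of $L=\K(\ell)$, using that $\K(\ell)$ admits no proper $\ell$-extension (a standard fact about maximal pro-$\ell$ extensions: for $a\in L^\times$ pick a finite $\ell$-subextension $F\subseteq L$ containing $a$ and $\mu_{\ell^n}$; the Galois closure over $\K$ of $F(\sqrt[\ell^n]{a})$ is obtained by adjoining to $F$ the $\ell^n$-th roots of the finitely many $\K$-conjugates of $a$, hence is again a finite $\ell$-extension of $\K$ and so lies in $L$, whence $\sqrt[\ell^n]{a}\in L$).

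Feeding this short exact sequence into the long exact cohomology sequence and invoking Hilbert's Theorem~90, $H^1(G,L^\times)=0$, yields the Kummer isomorphisms $H^1(G,\mu_{\ell^n})\cong\K^\times/(\K^\times)^{\ell^n}$, under which the class of $a\in\K^\times$ corresponds to the cocycle $\sigma\mapsto\sigma(\alpha)\alpha^{-1}$ for any $\alpha\in L^\times$ with $\alpha^{\ell^n}=a$. By Example~\ref{ex:Galois}, $\Z_\ell(1)/\ell^n\cong\mu_{\ell^n}$ and $\Z_\ell(1)/\ell\cong\mu_\ell\cong\F_\ell$, and the reduction $\Z_\ell(1)/\ell^n\twoheadrightarrow\Z_\ell(1)/\ell$ becomes the power map $\mu_{\ell^n}\to\mu_\ell$, $\zeta\mapsto\zeta^{\ell^{n-1}}$; applying it to the cocycle above turns $\alpha=\sqrt[\ell^n]{a}$ into $\alpha^{\ell^{n-1}}=\sqrt[\ell]{a}$, and hence sends the class of $a$ in $\K^\times/(\K^\times)^{\ell^n}$ to the class of $a$ in $\K^\times/(\K^\times)^{\ell}$. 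Thus, up to these identifications, \eqref{eq:kum} is the canonical projection $\K^\times/(\K^\times)^{\ell^n}\twoheadrightarrow\K^\times/(\K^\times)^{\ell}$, which is visibly surjective for every $n$; so $\calG_{\K}$ is Kummerian. (This is, in substance, \cite[Thm.~4.2]{eq:kummer}.)

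I expect the $\ell^n$-divisibility of $\K(\ell)^\times$ --- equivalently, that the maximal $\ell$-extension of $\K$ has no non-trivial $\ell$-extension --- to be the only step requiring more than formal manipulation; everything else is the long exact sequence, Hilbert~90, and bookkeeping with Kummer cocycles. An essentially equivalent alternative would be to verify condition~(v) of Proposition~\ref{prop:kummer}: the same Kummer sequences together with $H^2(G,L^\times)=\mathrm{Br}(L/\K)$ identify $H^2_{\mathrm{cts}}(G,\Z_\ell(1))$ with the $\ell$-adic Tate module $\varprojlim_{n}\,\mathrm{Br}(\K(\ell)/\K)[\ell^n]$, which is torsion-free.
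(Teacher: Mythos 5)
Your argument is correct, and it follows the same route the paper intends: the paper offers no proof of Theorem~\ref{thm:galois}, merely the remark ``By Kummer theory\dots'' together with a citation of \cite[Thm.~4.2]{eq:kummer}, and your proof is precisely the standard Kummer-theoretic argument behind that citation (with the torsion-freeness read off from $\image(\ttheta_{\K,\ell})$ as in Example~\ref{ex:Galois}). The only step deserving care --- the $\ell^n$-divisibility of $\K(\ell)^\times$ --- you justify correctly via the Galois closure of $F(\sqrt[\ell^n]{a})$ over $\K$ being a finite $\ell$-extension.
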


From Proposition~\ref{prop:maxTh} and Proposition~\ref{prop:kummer}--(iv), one concludes the following fact.

\begin{cor}
\label{cor:kummer}
Let $(G,\theta)$ be a Kummerian torsion-free oriented pro-$\ell$ group.
Then $(G/K_\theta(G),\batheta)$ is the maximal $\theta$-abelian quotient of $G$.
\end{cor}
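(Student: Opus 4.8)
The plan is to read this off almost directly from Proposition~\ref{prop:kummer}, since the only genuine content is the identification of the abstract maximal $\theta$-abelian quotient $G(\theta)=G/I_\theta(G)$ with the concrete quotient $G/K_\theta(G)$. First I would recall from \S\ref{ss:maxTh} that, for an arbitrary oriented pro-$\ell$ group $\calG=(G,\theta)$, the group $G(\theta)=G/I_\theta(G)$ equipped with the induced orientation $\batheta$ is $\batheta$-abelian and, by Proposition~\ref{prop:maxTh}, enjoys the universal property characterising the maximal $\theta$-abelian quotient; in particular such a quotient is unique up to a unique isomorphism of oriented pro-$\ell$ groups.

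Now I would invoke the hypothesis that $\calG$ is Kummerian. By condition (iv) of Proposition~\ref{prop:kummer} one has $I_\theta(G)=K_\theta(G)$, and since $K_\theta(G)\subseteq\kernel(\theta)$ by \eqref{eq:defK3}, the orientation $\theta$ descends to $G/K_\theta(G)$, yielding precisely $\bar\theta$. Hence the identity on $G$ induces an isomorphism of oriented pro-$\ell$ groups $(G(\theta),\batheta)=(G/K_\theta(G),\bar\theta)$, which is the assertion. As an alternative route, not appealing to (iv) explicitly, I would use condition (iii) of Proposition~\ref{prop:kummer}, namely that $\calG/K_\theta(G)=(G/K_\theta(G),\bar\theta)$ is $\bar\theta$-abelian, and then verify the universal property directly: given a homomorphism $\psi\colon(G,\theta)\to(A,\theta^\circ)$ onto a $\theta^\circ$-abelian oriented pro-$\ell$ group, the computation recorded in \eqref{eq:max1} gives $\psi(K_\theta(G))\subseteq K_{\theta^\circ}(A)=\triv$, so $\psi$ factors through $G/K_\theta(G)$, the factorisation being unique because $G\to G/K_\theta(G)$ is surjective. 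Thus $(G/K_\theta(G),\bar\theta)$ satisfies the defining universal property, and by its uniqueness it agrees with $(G(\theta),\batheta)$.

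There is essentially no hard step here: the corollary is a bookkeeping consequence of Propositions~\ref{prop:max} and~\ref{prop:kummer}. The only point deserving a moment's care is to confirm that the orientation carried by $G/K_\theta(G)$ under either route is literally $\batheta$, and not merely abstractly isomorphic to it; but in both cases it is by construction the orientation induced by $\theta$, so this is immediate.
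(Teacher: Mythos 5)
Your argument is correct and is essentially the paper's own: the paper gives no written proof but derives the corollary exactly from Proposition~\ref{prop:kummer} (the equivalence of (iii) and (iv), i.e.\ $I_\theta(G)=K_\theta(G)$ in the Kummerian case) together with the universal property of $G(\theta)=G/I_\theta(G)$. Your additional check that the induced orientation on $G/K_\theta(G)$ is literally $\batheta$ is a sensible, if routine, precaution.
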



\section{The Bogomolov-Positselski property}\label{sec:bogo}


\subsection{Bogomolov's conjecture}
\label{ssec:bogo}
Let $\K$ be a field containing a primitive $\ell^{\mathrm{th}}$-root of 1 (and also $\sqrt{-1}$ if $\ell=2$), and let $\mathbb{L}=\sqrt[\ell^\infty]{\K}$ denote the compositum of all radical extensions $\K(\sqrt[\ell^n]{a})$, with $a\in\K^\times$ and $n\geq1$, i.e.,
\begin{equation}
\label{eq:FL}
 \mathbb{L}=\sqrt[\ell^\infty]{\K}=\K\left(\:\sqrt[\ell^n]{a}\:\mid\:a\in\K^\times,n\geq1\:\right).
\end{equation}
The maximal pro-$\ell$ Galois group $G_{\mathbb{L}}(\ell)$ of the field $\mathbb{L}$ is equal to the pro-$\ell$ group 
$K_{\ttheta_{\K,\ell}}(G_{\K}(\ell))$ associated to the oriented pro-$\ell$ group 
$(G_{\K}(\ell),\ttheta_{\K,\ell})$ (cf. \cite[Thm.~4.2]{eq:kummer}).
Observe that the $\ell$-cyclotomic character associated to the maximal pro-$\ell$ Galois group of $\mathbb{L}$ is the trivial $\ell$-orientation $\mathbf{1}\colon K_{\ttheta_{\K,\ell}}(G_\K(\ell))\to\{1\}\subseteq\Z_\ell^\times$.

Motivated by a conjecture formulated by F.~Bogomolov in \cite{bogo} --- see Remark~\ref{rem:history Bogoconj} below ---, L.~Positselski stated the following conjecture on the pro-$\ell$ group $G_{\mathbb{L}}(\ell)=K_{\ttheta_{\K,\ell}}(G_\K(\ell))$ (cf. \cite[Conjecture~1.2]{pos:k}).

\begin{conj}\label{conj:bogo}
 Let $\K$ be a field containing a primitive $\ell^{\mathrm{th}}$-root of 1,
and also $\sqrt{-1}$ if $\ell=2$.
Then the maximal pro-$\ell$ Galois group $G_{\mathbb{L}}(\ell)$ of $\mathbb{L}=\sqrt[\ell^\infty]{\K}$ is a free pro-$\ell$ group.
\end{conj}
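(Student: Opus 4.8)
The plan is to translate the field-theoretic statement into the group-theoretic language developed above and then to attack freeness cohomologically. Set $(G,\theta)=\calG_{\K,\ell}=(G_\K(\ell),\ttheta_{\K,\ell})$. By \cite[Thm.~4.2]{eq:kummer} one has $G_{\mathbb{L}}(\ell)=K_\theta(G)$, and by Theorem~\ref{thm:galois} the oriented pro-$\ell$ group $(G,\theta)$ is torsion-free and Kummerian; hence Proposition~\ref{prop:kummer}(iv) gives $K_\theta(G)=I_\theta(G)=\kernel(\pi^{\ab}_{G,\theta})$. Thus the conjecture asserts exactly that $\calG_{\K,\ell}$ has the Bogomolov property. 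Since a pro-$\ell$ group is free if, and only if, its $\ell$-cohomological dimension is at most $1$, the goal becomes to prove $H^2(K_\theta(G),\F_\ell)=0$.

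For the cohomological analysis I would invoke the Rost--Voevodsky theorem: under the hypotheses on $\K$ the algebra $H^\bullet(G,\F_\ell)$ is isomorphic to $K^\bullet_M(\K)/\ell$ and is therefore quadratic. Hence $(G,\theta)$ satisfies the hypotheses of Theorem~\ref{thm:two}, and the canonical extension $\bos$ is a Frattini pro-$\ell$ cover with $G(\theta)$ locally uniform. By Theorem~\ref{thm:two} the Bogomolov property for $\calG_{\K,\ell}$ is equivalent to the injectivity of the transgression
\[
 d_2^{2,1}\colon H^2(G(\theta),H^1(K_\theta(G),\F_\ell))\longrightarrow H^4(G(\theta),\F_\ell)\simeq\Lambda_4(V),\qquad V=H^1(G,\F_\ell),
\]
in the Hochschild--Serre spectral sequence of $\bos$. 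This reduces the entire conjecture to a single injectivity statement on the second page.

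To establish this injectivity I would exploit the rigid shape of the target. Because $G(\theta)$ is locally uniform, $H^\bullet(G(\theta),\F_\ell)$ is the exterior algebra $\Lambda_\bullet(V)$ on $V$, and the coefficient module $H^1(K_\theta(G),\F_\ell)$ carries a $G(\theta)$-action that I would identify, through inflation and the quadratic presentation, with the degree-two part of $\kernel(j^\bullet)$ (cf. Remark~\ref{rem:posit}). Following the circle of ideas indicated there, the plan is to express $\kernel(d_2^{2,1})$ by means of the groups $\Ext^{s,t}_{H^\bullet(G,\F_\ell)}(\F_\ell,\kernel(j^\bullet))$ with $s\neq t$, and then to deduce its vanishing from a Koszulity input: by \cite[Thm.~1.4]{pos:k} the Bogomolov property follows once $H^\bullet(G,\F_\ell)$ is a Koszul algebra and $\kernel(j^\bullet)$ is a Koszul module over it, and these two properties are exactly what force the off-diagonal $\Ext$ groups, hence $\kernel(d_2^{2,1})$, to vanish.

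The main obstacle is precisely this injectivity, equivalently the required Koszulity input: one must control $H^2(K_\theta(G),\F_\ell)$ as a $G(\theta)$-module and show that transgression into $\Lambda_4(V)$ has trivial kernel, and this cannot be read off from the quadratic presentation of $H^\bullet(G,\F_\ell)$ alone. The decisive step is to prove that the Milnor $K$-ring mod $\ell$ is a Koszul algebra with $\kernel(j^\bullet)$ a Koszul module. Lacking such an intrinsic Koszulity theorem, one falls back on structural inputs --- establishing the injectivity of $d_2^{2,1}$ for the building blocks of the class $\ET_\ell$ and showing that it is inherited under free and fibre products, which is essentially the content of Theorem~\ref{thm:one} --- but converting this into an unconditional proof of the conjecture requires exactly the Koszulity statement above, and securing it is where the real difficulty lies.
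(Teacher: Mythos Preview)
The statement you are attempting to prove is a \emph{conjecture}: the paper does not prove it, and no proof is known in general. What the paper establishes is that $\calG_{\K,\ell}$ has the Bogomolov property whenever it is of elementary type (Theorem~\ref{thm:one}, i.e., Theorem~\ref{cor:etc}), and hence that Conjecture~\ref{conj:bogo} holds for the specific classes of fields listed in Corollary~\ref{cor:fields}. There is therefore no ``paper's own proof'' of the full conjecture to compare against.

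Your reformulation is entirely accurate: the identification $G_{\mathbb L}(\ell)=K_\theta(G)$, the Kummerian and torsion-free properties, the quadraticity of $H^\bullet(G,\F_\ell)$ via Rost--Voevodsky, and the reduction via Theorem~\ref{thm:two} to the injectivity of $d_2^{2,1}$ are all correct and are exactly the framework the paper sets up. But the decisive step you propose --- deducing injectivity of $d_2^{2,1}$ from Koszulity of $H^\bullet(G,\F_\ell)$ together with Koszulity of $\kernel(j^\bullet)$ as a module --- is itself an open problem, as you concede in your final paragraph. Positselski's theorem \cite[Thm.~1.4]{pos:k} shows that these Koszulity hypotheses \emph{imply} the Bogomolov property, but neither hypothesis is known for arbitrary $\K$; indeed, even the Koszulity of the algebra $K_M^\bullet(\K)/\ell$ is conjectural (cf.\ \cite{posivis,MPQT}). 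So your plan is not a proof but a reduction of one open conjecture to another, and you correctly identify this. The paper's contribution is orthogonal: rather than pursuing Koszulity, it verifies the Bogomolov property directly for the building blocks of $\ET_\ell$ and shows closure under free and fibre products (Theorems~\ref{thm:demushkin}, \ref{thm:amalg bogo}, Proposition~\ref{prop:semidirect kummer}), thereby confirming the conjecture conditionally on Efrat's Elementary Type Conjecture.
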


Conjecture~\ref{conj:bogo} is the motivation for the following definition.

\begin{defin}\label{defi:bogomolov}\rm
A Kummerian oriented pro-$\ell$ group $(G,\theta)$ is said to have the {\sl Bogomolov-Positselski property} if the  subgroup $K_\theta(G)$ is a free pro-$\ell$ group.
\end{defin}

Hence, Conjecture~\ref{conj:bogo} may be restated as follows: if $\K$ is a field containing a primitive $\ell^{\mathrm{th}}$-root of 1 (and also $\sqrt{-1}$ if $\ell=2$), then the oriented pro-$\ell$ group $(G_{\K}(\ell),\ttheta_{\K,\ell})$ has the Bogomolov-Positselski property.

\begin{rem}\label{rem:history Bogoconj}\rm
The original formulation of Bogomolov's conjecture states that if $\K$ is a field containing an algebraically closed field then the (closure of the) commutator subgroup of the Sylow pro-$\ell$ subgroup of the absolute Galois group $G_{\K}$ of $\K$ is a free pro-$\ell$ group. Furthermore, the (closure of the) commutator subgroup of the maximal pro-$\ell$ Galois group $G_{\K}(\ell)$ should be a free pro-$\ell$ group as well (see also \cite[Conjecture~6.2]{bogotschin2} and \cite[\S~3.1.2]{banff}, where the conjecture is stated for function fields).

\noindent
In \cite{pos:k}, Positselski observed that the only essential part of the condition about the algebraically closed subfield of 
$\K$ is that $\K$ should contain all the roots of 1 of $\ell$-power order. Consequently, he formulated the following conjecture (cf. \cite[Conjecture~1.1]{pos:k}): the pro-$\ell$ Sylow subgroup of the absolute Galois group $G_{\mathbb{L}}$, with $\mathbb{L}=\sqrt[\ell^\infty]{\K}$ and $\K$ an arbitrary field, is a free pro-$\ell$ group, i.e., $\cd_\ell(G_{\mathbb{L}})\leq1$ (or, equivalently, $G_{\mathbb{L}}$ is $\ell$-projective, cf. \cite[\S I.3.4, Proposition 16]{serre:gal}).
Note that this conjecture is stronger than Conjecture~\ref{conj:bogo}, and likely hard to approach, while --- as stated by Positselski himself, cf. \cite[\S~1.3]{pos:k} --- the latter is closer to Bogomolov's original conjecture.
\end{rem}

\begin{exam}\label{exam:bogo}\rm
\begin{itemize}
\item[(a)] Let $(G,\theta)$ be a torsion-free $\theta$-abelian oriented pro-$\ell$ group.
Then $(G,\theta)$ is Kummerian (cf. Example~\ref{exam:kummer groups}--(a)), and by Proposition~\ref{prop:kummer}--(iv) one has $$I_\theta(G)=K_\theta(G)=\{1\}.$$ So, $(G,\theta)$ has the Bogomolov-Positselski property. 

\item[(b)] Let $(G,\theta)$ be an oriented pro-$\ell$ group with $G$ being a free pro-$\ell$ group.
Then $(G,\theta)$ is Kummerian by Example~\ref{exam:kummer groups}--(b), and it has the Bogomolov-Positselski property as every closed subgroup of $G$ is again a free pro-$\ell$ group.
\item[(c)] Let 
\[\begin{split}
   G &= \langle\: x,y,z\:\mid\: [x,y]=z, [x,z]=[y,z]=1\:\rangle\\
   &= \left\{\left(\begin{array}{ccc} 1 & a & c \\ 0 & 1 & b \\ 0 & 0 & 1 \end{array} \right)\:\mid\:a,b,c\in\Z_\ell\right\}
  \end{split}\]
  be the {\sl Heisenberg group over $\Z_\ell$}, and set $(G,\bone)$, where $\bone\colon G\to\Z_\ell^\times$ is the trivial orientation.
Then $K_\theta(G)=G^\prime\simeq \Z_\ell$ is the cyclic pro-$\ell$ subgroup generated by $z$, and  $G^{\ab}\simeq\Z_\ell^2$.
Hence $(G,\theta)$ is Kummerian by Example~\ref{exam:kummer groups}--(c), and it has the Bogomolov-Positselski property. 
Nevertheless, $G$ does not occur as the maximal pro-$\ell$ Galois group of any field containing $\mu_{\ell^\infty}$ (cf. \cite[Ex.~5.4]{cq:1smoothBK}).
\end{itemize}
\end{exam}




\subsection{Self-isolated pro-$\ell$ groups and Frattini pro-$\ell$ covers}\label{ssec:unif}
Let $G$ be a pro-$\ell$ group, and let $H\subseteq G$ be a subgroup.
The {\sl isolator of $H$} is the subgroup
\[
 \Iso(H)=\cl\left(\left\langle\:g\in G\:\mid\:g^n\in H\text{ for some }n\geq 1\:\right\rangle\right)
\]
(cf. \cite[\S~66]{kur:grp}).
We say that $H$ is {\sl self-isolated} if $\Iso(H)=H$.
In particular, if $N$ is a normal subgroup of $G$, then $G$ is self-isolated if, and only if, the quotient $G/N$ is a torsion-free pro-$\ell$ group.
The following fact is almost straightforward.

\begin{fact}\label{fact:selfiso}
 Let $(G,\theta)$ be a torsion-free $\theta$-abelian oriented pro-$\ell$ group.
 Let $N$ be a normal subgroup of $G$ contained in both $\kernel(\theta)$ and $\Phi(G)$.
 If $N$ is self-isolated, then $N=\{1\}$.
\end{fact}

\begin{proof}
{By Remark~\ref{rem:thetabel}, $\Phi(G)\cap\kernel(\theta)=\kernel(\theta)^\ell$. 
 As $N\subseteq\kernel(\theta)$ is an isolated subgroup, it is a direct summand of $\kernel(\theta)$.
 Thus by Fact~\ref{fact:dir},  $N$ is trivial.}
\end{proof}

Fact~\ref{fact:selfiso} has the following consequence.

\begin{prop}
\label{prop:fratK}
Let $(G,\theta)$ be a torsion-free Kummerian oriented pro-$\ell$ group. Let $N\trianglelefteq G$ be a closed normal, self-isolated, subgroup of $G$ contained in $\kernel(\theta)$ satisfying 
$$K_\theta(G)\subseteq N\subseteq\Phi(G).$$ 
Then $N=K_\theta(G)$.
\end{prop}

A {\sl Frattini pro-$\ell$ cover} of pro-$\ell$ groups is a short exact sequence of pro-$\ell$ groups 
\begin{equation}\label{eq:ses Frattinicover}
 \xymatrix{\{1\} \ar[r] & N \ar[r]  & G \ar[r]^-{\tau}&\bar G \ar[r]& \{1\}}
\end{equation}
satisfying $N\subseteq \Phi(G)$. One also says that $\tau\colon G\to\baG$
is a Frattini pro-$\ell$ cover of $\bar G$.
One may characterize those pro-$\ell$ groups which may be completed into Kummerian oriented pro-$\ell$ groups with the Bogomolov-Positselski property as follows.

\begin{thm}
\label{prop:Kummer fratcov}
A pro-$\ell$ group $G$ may be completed into a Kummerian oriented pro-$\ell$ group $(G,\theta)$
with the Bogomolov-Positselski property if, and only if, $G$ is a Frattini pro-$\ell$ cover \eqref{eq:ses Frattinicover}
of {$\bar G$, where $(\bar G,\bar\theta)$ is a $\bar\theta$-abelian oriented pro-$\ell$ group and $N$ is a free pro-$\ell$ group.}
\end{thm}

\begin{proof}
If $(G,\theta)$ is Kummerian with the Bogomolov-Positselski property, then, 
{by Proposition~\ref{prop:kummer}}, $(G/K_\theta(G),\bar\theta)=(G,\theta)/K_{\theta}(G)$ is $\bar\theta$-abelian and $N=K_{\theta}(G)$ is a free pro-$\ell$ group by Definition~\ref{defi:bogomolov}. 
This shows one implication.

Conversely, if $(\bar G,\bar\theta)$ is $\bar\theta$-abelian, then the epimorphism of oriented pro-$\ell$ groups $(G,\theta)\to(\bar G,\bar\theta)$ factors through $(G,\theta)/I_{\theta}(G)$ by Proposition~\ref{prop:maxTh}. 
Hence $I_\theta(G)\subseteq N$, while $N\subseteq\Phi(G)$ by hypothesis, thus $(G,\theta)$ is Kummerian by Proposition~\ref{prop:kummer}:(vi).
Thus, $I_\theta(G)=K_\theta(G)$ by Proposition~\ref{prop:kummer}:(iv), and since 
$$K_\theta(G)=I_\theta(G)\subseteq N\subseteq\Phi(G),$$ Proposition~\ref{prop:fratK} yields $N=K_\theta(G)$, i.e., $(G,\theta)$ has the Bogomolov-Positselski property.
\end{proof}


\section{The Bogomolov-Positselski property and cohomology}


\subsection{Quadratic cohomology and the Norm Residue Theorem}
\label{ssec:quad NRT}

Let $G$ be a pro-$\ell$ group.
The cohomology groups $H^n(G,\F_\ell)$, $n\geq1$, where $\F_\ell$ is the trivial $G$-module 
isomorphic --- as abelian group --- to $\F_\ell=\Z/\ell\Z$, come endowed with the bilinear {\sl cup-product}
\[
 H^s(G,\F_\ell)\times H^t(G,\F_\ell)\overset{\cup}{\longrightarrow} H^{s+t}(G,\F_\ell),\qquad s,t\geq0,
\]
which is associative and graded-commutative, i.e., $\beta\cup\alpha=(-1)^{st}\alpha\cup\beta$ for $\alpha\in H^s(G,\F_\ell)$ and $\beta\in H^t(G,\F_\ell)$ (cf. \cite[Ch.~I, \S~4]{nsw:cohn}).
Thus, 
\[
 H^\bullet(G,\F_\ell)=\coprod_{n\geq0}H^n(G,\F_\ell)
\]
is a connected $\dbN_0$-graded, graded-commutative, associative $\F_\ell$-algebra.

For an $\F_\ell$-vector space $V$, let $\boT^\bullet V$ denote
the $\F_\ell$-tensor algebra, i.e., 
\begin{equation}\label{eq:tenalg}
\boT^\bullet V=\coprod_{n\in\dbN_0} \boT^n V\,\qquad \text{where}\qquad  \boT^n V=V^{\otimes^n}.
\end{equation}
The $\dbN_0$-graded associative $\F_\ell$-algebra $\boA_\bullet$ is said to be generated in degree $1$, if the
canonical homomorphism $\phi_\bullet\colon \boT^\bullet \boA_1\to \boA_\bullet$ of $\dbN_0$-graded associative $\F_\ell$-algebras is surjective. Moreover, $\boA_\bullet$ is said to be quadratic, if it is $1$-generated
and $\kernel(\phi_\bullet)=\langle \kernel(\phi_2)\rangle$, i.e., the ideal $\kernel(\phi_\bullet)$ is generated
in degree $2$.

\begin{defin}\label{defin:Hquad}\rm
A pro-$\ell$ group $G$ is said to be {\sl $H^\bullet$-quadratic} if $H^\bullet(G,\F_\ell)$ is a quadratic algebra. \end{defin}

For an $\F_\ell$-vector space $V$, let 
$\bfLam^\bullet V=\boT^\bullet V/\langle\, v\otimes v\mid v\in V\,\rangle$
denote the {\sl exterior $\F_\ell$-algebra} spanned by $V$, and 
$\boS^\bullet V=\boT^\bullet V/\langle\, v\otimes w-w\otimes v\mid v,w\in V\,\rangle$ denote the
{\sl symmetric $\F_\ell$-algebra} spanned by $V$.
Then $G$ is $H^\bullet$-quadratic if the cup-product induces an isomorphism of graded $\F_\ell$-algebras
\begin{equation}\label{eq:tensor iso}
 \bfXi^\bullet H^1(G,\F_\ell)/\langle\, W\,\rangle\overset{\sim}{\longrightarrow}H^\bullet(G,\F_\ell),
\end{equation}
where $\bfXi^\bullet=\bfLam^\bullet$ if $\ell$ is odd, and $\bfXi^\bullet=\boS^\bullet$ if $\ell=2$. Moreover,
\begin{equation}
\label{eq:defW}
W=\kernel\left(\Xi^2(H^1(G,\F_\ell))\overset{\cup}{\longrightarrow} H^2(G,\F_\ell)\right).
\end{equation}
By the Norm Residue Theorem, if the field $\K$ contains a primitive $\ell^{\mathrm{th}}$-root of unity,
then the maximal pro-$\ell$ Galois group $G_{\K}(\ell)$ is $H^\bullet$-quadratic (cf. \cite{cq:bk} or \cite{qw:cyclotomic}).

\begin{rem}\label{rem:bockstein}\rm
Let $\ell=2$ and let $G$ be a pro-$2$ group.
Then one has $\alpha\cup\alpha=0$ for every $\alpha\in H^1(G,\F_2)$ if, and only if, the map 
$$H^1(G,\Z/4)\longrightarrow H^1(G,\F_2),$$ induced by the epimorphism of trivial $G$-modules $\Z/4\to\F_2$, is surjective
(cf. \cite[Fact~7.1]{qw:cyclotomic}).
 In particular, if $(G,\theta)$ is a torsion-free Kummerian oriented pro-$2$ group, one concludes that
 $\alpha\cup\alpha=0$ for all $\alpha\in H^1(G,\F_2)$.
 This is the case for $(G_{\K}(2),\ttheta_{\K,2})$, with $\K$ a field containing $\sqrt{-1}$, i.e.,  $H^\bullet(G_{\K}(2),\F_2)$ is quadratic and also a quotient of the exterior algebra $\bfLam^\bullet H^1(G_{\K}(2),\F_2)$.
 \end{rem}

\begin{exam}\label{ex:cohom locunif}\rm
 Let $(G,\theta)$ be torsion-free $\theta$-abelian oriented pro-$\ell$ group.
 Then $G$ is a torsion free powerful pro-$\ell$ group (cf. \cite[Ch.~4, \S~1]{ddsms}), and 
 $$G\simeq\varprojlim_{i\in I} A_i\rtimes \image(\theta)$$ for finitely generated free abelian pro-$\ell$ groups $A_i$.
 Thus by M.~Lazard's theorem (cf. \cite{lazard}) one has $\bfLam_\bullet H^1(G,\F_\ell)$ (see, e.g., \cite[Thm.~3.13]{qw:cyclotomic}), and hence $G$ is $H^\bullet$-quadratic. 
 \end{exam}

\subsection{Quadratic cohomology and {the Bogomolov-Positselski property}}
\label{ss:quadkum}
Let $(G,\theta)$ be a torsion-free Kummerian oriented pro-$\ell$ group.
The short exact sequence of pro-$\ell$ groups 
\begin{equation}
\label{eq:ses GK}
 \xymatrix{ \{1\} \ar[r] & I_\theta(G) \ar[r] &G \ar[r] & G(\theta) \ar[r] &\{1\}}
\end{equation}
induces the 5-terms exact sequence in cohomology 
\begin{equation}\label{eq:5tes}
 \begin{tikzpicture}[descr/.style={fill=white,inner sep=2pt}]
        \matrix (m) [
            matrix of math nodes,
            row sep=3.5em,
            column sep=3.8em,
            text height=1.5ex, text depth=0.25ex
        ]
        {  0 & H^1( G(\theta),\F_\ell) & H^1(G,\F_\ell) & H^1(I_\theta(G),\F_\ell)^{ G(\theta)} \\
            & H^2( G(\theta),\F_\ell) & H^2(G,\F_\ell) & \\
           };

        \path[overlay,->, font=\scriptsize,>=latex]
        (m-1-1) edge  (m-1-2) 
        (m-1-2) edge node[auto] {$\Inf_{ G(\theta),G}^1$} (m-1-3) 
        (m-1-3) edge node[auto] {$\Res^1_{G,I_\theta(G)}$} (m-1-4)
        (m-1-4) edge[out=355,in=175] node[descr,yshift=0.3ex] {$ d_2^{0,1}$} (m-2-2)
        (m-2-2) edge node[auto] {$\Inf_{ G(\theta),G}^2$} (m-2-3);
\end{tikzpicture}\end{equation}
(cf. \cite[Prop.~1.6.7]{nsw:cohn}). As $(G,\theta)$ is Kummerian, one has
 {$I_\theta(G)$}$=K_\theta(G)\subseteq\Phi(G)$ (cf. Proposition~\ref{prop:kummer}(iv)). 
 {Hence $\Inf_{ G(\theta),G}^1$ is an isomorphism and} $\Res^1_{G,I_\theta(G)}$ is the 0-map.
As $(G(\theta),\bar\theta)=(G,\theta)/I_\theta(G)$ is $\bar\theta$-abelian, 
 one has 
\begin{equation}\label{eq:Lambda Gtheta}
 H^\bullet( G(\theta),\F_\ell)\simeq\bfLam^\bullet H^1(G,\F_\ell)
\end{equation}
(cf. Example~\ref{ex:cohom locunif}).
If in addition $G$ is $H^\bullet$-quadratic, then $H^\bullet(G,\F_\ell)$ is a quotient of $\bfLam^\bullet H^1(G,\F_\ell)$ (cf. Remark~\ref{rem:bockstein}).
In particular, the inflation map $\psi^\bullet=\Inf_{ G(\theta),G}^\bullet$ induces a surjective homomorphism of $\N_0$-graded $\F_\ell$-algebras
\begin{equation}\label{eq:psi n}
 \xymatrix{ H^\bullet( G(\theta),\F_\ell)\simeq\Lambda^\bullet H^1(G,\F_\ell)\ar@{->>}[r]^-{\psi_\bullet} &H^\bullet(G,\F_\ell)}
\end{equation}
satisfying
\begin{equation}
 \kernel(\psi_n)\simeq \kernel(\psi_2)\wedge\left(\Lambda^{n-2}H^1(G,\F_\ell)\right)\quad\text{for all }n\geq2.
\end{equation}
Since $\Res^1_{G,K_\theta(G)}$ is trivial,  one concludes from \eqref{eq:5tes} that $d_2^{0,1}$ is injective, 
$\image(d_2^{0,1})=\kernel(\psi_2)$,  and $H^2(G,\F_\ell)\simeq H^2( G(\theta),\F_\ell)/\image(d_2^{0,1})$.
{Thus, as $H^\bullet(G,\F_\ell)$ is quadratic, one has
\begin{equation}
\label{eq:quadG}
H^\bullet(G,\F_\ell)\simeq H^\bullet( G(\theta),\F_\ell)/\langle\, \image(d_2^{0,1}) \,\rangle.
\end{equation}}


\subsection{A cohomological criterion}
\label{ss:HSspec}
Let $(G,\theta)$ be a Kummerian torsion-free oriented pro-$\ell$ group {which is} $H^\bullet$-quadratic.
Let $(E_r^{s,t},d_r^{s,t})$ denote the Hochschild-Serre spectral sequence with coefficients in $\F_\ell$ associated to the short exact sequence \eqref{eq:ses GK}, {i.e.,}
\begin{equation}
\label{eq:boeq2}
  E_2^{s,t}=H^s( G(\theta),H^t({I}_\theta(G),\F_\ell))\Longrightarrow  E_\infty^{s,t},\qquad s,t\geq0,
\end{equation}
with differentials $d_r^{s,t}\colon E_r^{s,t}\to E_r^{s+r,t-r+1}$ satisfying $d_r\circ d_r=0$
(cf. \cite[Ch.~II, \S~4]{nsw:cohn}).
In particular, by \eqref{eq:psi n} one has $E_2^{\bullet,0}\simeq\Lambda^\bullet H^1(G,\F_\ell)$.
For $\alpha\in E_2^{s,0}=H^s( G(\theta),\F_\ell)$, $s\geq0$, and 
$\beta\in E_2^{0,1}=H^1({I}_\theta(G),\F_\ell)^{ G(\theta)}$, one has 
\begin{equation}\label{eq:cup product E}
\begin{split}
&\alpha\cup\beta\in H^s( G(\theta),\F_\ell\otimes H^1({I}_\theta(G),\F_\ell))=E_2^{s,1},\\
&  d_2^{s,1}(\alpha\cup\beta)=(-1)^{s+1}\alpha\cup d_2^{0,1}(\beta)\in E_2^{s+2,0}
\end{split}
\end{equation}
(cf. \cite[Ch.~II, Ex.~4.5]{nsw:cohn}).

\begin{prop}\label{lem:E}
 Let $(G,\theta)$ be a  torsion-free Kummerian oriented pro-$\ell$ group with $G$ being $H^\bullet$-quadratic.
 Then
 \begin{itemize}
  \item[(i)] $E_\infty^{s,t}$ is concentrated on the 0th line, i.e., $E_\infty^{s,t}=0$ for every $s\geq0$ and $t\geq1$;
  \item[(ii)] $E_3^{s,0}\simeq E_\infty^{s,0}\simeq H^s(G,\F_\ell)$ for every $s\geq0$.
 \end{itemize}
\end{prop}

\begin{proof}
Since $(G,\theta)$ is Kummerian, by \eqref{eq:Lambda Gtheta} one has $E_2^{\bullet,0}\simeq \Lambda^\bullet H^1(G,\F_\ell)$.
For every $t\geq 0$ there exists a descending separating filtration $(F^kH^t(G,\F_\ell))_{0\leq k\leq t}$ satisfying
$F^0H^t(G,\F_\ell)=H^t(G,\F_\ell)$ and
\begin{equation}
\label{eq:filinfty}
F^sH^{s+t}(G,\F_\ell)/F^{s+1}H^{s+t}(G,\F_\ell)\simeq E^{s,t}_\infty
\end{equation}
where $F^{s+t+1}H^{s+t}(G,\F_\ell)=\{0\}$ (cf. \cite[p.~99]{bens:2coh}).
By \cite[Ch.~II, \S~4, Ex.~1]{nsw:cohn},  the composite of the maps
\[
 \xymatrix{ E_2^{s,0}=H^s(G(\theta),\F_p)\ar@{->>}[r] & E_3^{s,0}\ar@{->>}[r] & \cdots \ar@{->>}[r] & E_\infty^{s,0}\ar@{>->}[r]& H^s(G,\F_p)}
\]
{is} the  {\sl $s$-th left edge morphism} (cf. \cite[p.~99]{nsw:cohn}) 
{and hence coincides with  the inflation map}
$\Inf_{G(\theta),G}^s$, which is surjective by \eqref{eq:psi n}. Thus $F^0H^t(G,\F_\ell)=H^t(G,\F_\ell)$ for all $t\geq 0$, i.e.,
 $E_\infty^{\bullet,0}\simeq H^\bullet(G,\F_p)$, and consequently $E_\infty^{k,t}=0$ for every $1\leq k\leq t$.
This shows (i).

By \eqref{eq:cup product E}, one has canonical homomorphisms of $\N_0$-graded $\F_\ell$-algebras
\begin{equation}
\label{eq:morst}
\begin{gathered}
\sigma^\bullet\colon H^\bullet( G(\theta),\F_\ell)/\langle\, \image(d_2^{0,1}) \,\rangle \longrightarrow E_3^{\bullet,0},\\
\tau^\bullet\colon E_3^{\bullet,0}\longrightarrow H^\bullet( G,\F_\ell).
\end{gathered}
\end{equation}
{Moreover, $\sigma^\bullet$ and $\tau^\bullet$ are surjective, $\sigma^k$ and $\tau^k$ are isomorphisms for 
$k\in\{0,1,2\}$,
and their composition is an isomorphism of quadratic $\F_\ell$-algebras by \eqref{eq:quadG}. Thus
$\sigma^\bullet$ and $\tau^\bullet$ are isomorphisms which shows (ii).}
\end{proof}

Let $(G,\theta)$ be a Kummerian torsion-free oriented pro-$\ell$ group, and 
{pu}t $K_\theta(G)^{\mathrm{ab}}=K_\theta(G)/K_\theta(G)'$.
Recall that if $\K$ is a field containing a primitive $\ell^{\mathrm{th}}$-root of 1 and $(G,\theta)=(G_{\K}(\ell),\tilde\theta_{\K,\ell})$, then $K_\theta(G)^{\mathrm{ab}}$ is a free abelian pro-$\ell$ group, as the oriented pro-$\ell$ group $(K_\theta(G),\theta\vert_{K_\theta(G)})$ is again Kummerian, and since $\theta\vert_{K_\theta(G)}$ is trivial.
The short exact sequence of pro-$\ell$ groups
\begin{equation}\label{eq:ext}
  \xymatrix{ \{1\} \ar[r]& K_\theta(G)^{\mathrm{ab}} \ar[r]^-{\iota} & G/K_\theta(G)'\ar[r]^-{\pi}&  G(\theta)\ar[r]& \{1\} },
\end{equation}
where $ G(\theta)=G/K_\theta(G)$, defines a cohomology class $u\in H_{\mathrm{cts}}^2( G(\theta),K_\theta(G)^{\mathrm{ab}})$ (cf. \cite[p.~143]{nsw:cohn}), where $K_\theta(G)^{\mathrm{ab}}$ is considered as a topological left 
$\Z_\ell\dbl G(\theta)\dbr$-module and $H_{\mathrm{cts}}^\ast$ denotes continuous cochain cohomology (cf. \cite[Ch.~II, \S~7]{nsw:cohn}).
Since $[G,K_\theta(G)]\subseteq\Phi(G)$, one has 
\[\mathrm{Hom}(K_\theta(G),\F_\ell)=\mathrm{Hom}(K_\theta(G),\F_\ell)^{ G(\theta)}=E_2^{0,1}.
\]
Thus, the pairing 
\[\begin{split}
  & K_\theta(G)^{\mathrm{ab}}\times E_2^{0,1}\longrightarrow\F_\ell,\\
  & (hK_\theta(G)',\beta)\longmapsto \beta(h),\qquad \text{for }h\in K_\theta(G),
\end{split} 
\]
induces a map
\begin{equation}
\phi_u\colon E_2^{2,1}=H^2( G(\theta),\mathrm{Hom}(K_\theta(G),\F_\ell))\longrightarrow E_2^{4,0}=H^4( G(\theta),\F_\ell)
\end{equation}
given by $\phi_u(\alpha)= u\cup \alpha$ (cf. \cite[p.~114]{nsw:cohn}).


\begin{thm}
\label{prop:bogo2}
 Let $(G,\theta)$ be a Kummerian torsion-free oriented pro-$\ell$ group with $G$ an $H^\bullet$-quadratic pro-$\ell$ group.
 Then the following are equivalent.
\begin{itemize}
\item[(i)] $(G,\theta)$ has the Bogomolov-Positselski property;
\item[(ii)] the differential map $d_2^{2,1}\colon E_2^{2,1}\to E_2^{4,0}\simeq\Lambda^4H^1( G(\theta),\F_\ell)$ is injective;
\item[(iii)] the map $\phi_u$ is injective, i.e., $u\cup\alpha\neq0$ for every non-trivial $\alpha\in E_2^{2,1}$.
\end{itemize}
If these conditions hold, then the spectral sequence $E_2^{s,t}\Rightarrow E_\infty^{s,t}$
collapses at the $E_3$-page, i.e., $E_3=E_\infty$.
\end{thm}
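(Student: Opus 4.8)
The plan is to extract everything from the Hochschild--Serre spectral sequence \eqref{eq:boeq2} of the extension \eqref{eq:ses GK}, with $\baG=G/K_\theta(G)$, using Proposition~\ref{lem:E} as the essential input. The whole point is to identify $\kernel(d_2^{2,1})$ with $H^2(K_\theta(G),\F_\ell)^{\baG}$, and then to remember that $K_\theta(G)$ is a free pro-$\ell$ group if and only if $H^2(K_\theta(G),\F_\ell)=0$.

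First I would do the spectral-sequence bookkeeping around the three spots $(2,1)$, $(0,2)$ and $(3,0)$. For $r\ge3$ the differential $d_r$ shifts the second index by $-(r-1)\le-2$, so it kills no element of $E_r^{2,1}$ and receives none; hence $E_3^{2,1}=E_\infty^{2,1}=0$ by Proposition~\ref{lem:E}, and since no $d_2$ enters $E_2^{0,2}$ this says exactly that $\kernel(d_2^{2,1})=\image(d_2^{0,2})$. Next I would prove $d_3^{0,2}=0$: by Proposition~\ref{lem:E} the inflation $\psi_3\colon E_2^{3,0}\to H^3(G,\F_\ell)$ has kernel $\image(d_2^{1,1})$, while on the other hand $\psi_3$ factors through the edge epimorphism $E_2^{3,0}\twoheadrightarrow E_\infty^{3,0}$, whose kernel is the preimage under $E_2^{3,0}\twoheadrightarrow E_3^{3,0}$ of $\image(d_3^{0,2})\subseteq E_3^{3,0}$ --- this being the only differential affecting $E^{3,0}$ beyond the second page; comparing the two descriptions and using surjectivity of $E_2^{3,0}\twoheadrightarrow E_3^{3,0}$ forces $\image(d_3^{0,2})=0$. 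Since $d_r^{0,2}=0$ for $r\ge4$ as well and nothing enters $E^{0,2}$, this yields $E_\infty^{0,2}=\kernel(d_2^{0,2})$, which is $0$ by Proposition~\ref{lem:E}; hence $d_2^{0,2}$ is injective.

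Combining the two previous points, $\kernel(d_2^{2,1})=\image(d_2^{0,2})$, and the latter is isomorphic via $d_2^{0,2}$ to $E_2^{0,2}=H^2(K_\theta(G),\F_\ell)^{\baG}$. As $\baG$ is pro-$\ell$ and $H^2(K_\theta(G),\F_\ell)$ is a discrete $\baG$-module, its $\baG$-invariants vanish precisely when the module itself vanishes; and by \cite[Prop.~3.5.17]{nsw:cohn} this is equivalent to $K_\theta(G)$ being a free pro-$\ell$ group, i.e.\ to $\calG$ having the Bogomolov property in the sense of Definition~\ref{defi:bogomolov}. This establishes (i)$\Leftrightarrow$(ii). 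For (ii)$\Leftrightarrow$(iii) I would invoke the standard description of the transgression $d_2^{2,1}$ as cup product with the extension class $u$ of \eqref{eq:ext}, i.e.\ $d_2^{2,1}=\pm\phi_u$ (cf.\ \cite[p.~114]{nsw:cohn}), so that $\kernel(d_2^{2,1})=\kernel(\phi_u)$. Finally, if these equivalent conditions hold then $K_\theta(G)$ is free, so $H^t(K_\theta(G),\F_\ell)=0$ for $t\ge2$, the spectral sequence is concentrated in the rows $t=0,1$, and every $d_r$ with $r\ge3$ vanishes for degree reasons; hence $E_3=E_\infty$.

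The step I expect to be the real obstacle is the vanishing $d_3^{0,2}=0$, equivalently the injectivity of $d_2^{0,2}$: this is exactly where the quadraticity of $H^\bullet(G,\F_\ell)$ must enter, through the fine information in Proposition~\ref{lem:E} that $\image(d_2^{s,1})=\kernel(\psi_{s+2})$ (established in its proof), rather than through any purely formal property of the spectral sequence. The identification $d_2^{2,1}=\pm\phi_u$ needed for (ii)$\Leftrightarrow$(iii) is routine but rests on the cocycle description of the transgression recalled in \cite{nsw:cohn}.
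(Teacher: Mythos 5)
Your proposal is correct and follows essentially the same route as the paper's proof: both arguments rest entirely on Proposition~\ref{lem:E} (including the identity $\image(d_2^{s,1})=\kernel(\psi_{s+2})$ established in its proof) to force $d_3^{0,2}=0$, reduce everything to the vanishing of $E_2^{0,2}=H^2(K_\theta(G),\F_\ell)^{\baG}$, and conclude via the pro-$\ell$ fixed-point argument, the criterion $H^2=0\Leftrightarrow$ free, and the cup-product description of $d_2^{2,1}$. Your packaging of both implications (i)$\Leftrightarrow$(ii) into the single unconditional isomorphism $\kernel(d_2^{2,1})\simeq H^2(K_\theta(G),\F_\ell)^{\baG}$ is a mild (and clean) reorganization of the paper's two separate directions, not a different method.
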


\begin{proof}
By Proposition~\ref{lem:E}(ii), for every $s\geq0$ one has $E_3^{s,0}\simeq E_4^{s,0}\simeq\ldots\simeq E_\infty^{s,0}$.
Since, by definition, $E_4^{s,0}=E_3^{s,0}/\image(d_3^{s-3,2})$, one {conclude}s that the maps
$$ d_3^{s-3,2}\colon E_3^{s-3,2}\longrightarrow E_3^{s,0}\simeq H^s(G,\F_\ell)$$ {must be} the 0-maps for every $s\geq3$.
In particular, $E_4^{0,2}=\kernel(d_3^{0,2})$ is equal to $E_3^{0,2}$, which is $\kernel(d_2^{0,2})$ by definition. 
As $E_r^{s,t}$ is a first-quadrant spectral sequence, one has  $E_{r+1}^{0,2}=\ker(d_r^{0,2})$
and the map $d_r^{0,2}\colon E_r^{0,2}\to E_r^{r,3-r}=0$ is the 0-map for every $r\geq4$.
This implies that $E_3^{0,2}=E_4^{0,2}=\ldots=E_\infty^{0,2}$.
Thus, applying Proposition~\ref{lem:E}(i), yields
\begin{equation}\label{eq:thm E 1}
 0=E_\infty^{0,2}=E_3^{0,2}=\kernel(d_2^{0,2}),
\end{equation}
i.e., $d_2^{0,2}\colon E_2^{0,2}\to E_2^{2,1}$ is injective.

Moreover, one has $E_3^{2,1}=E_4^{2,1}=E_\infty^{2,1}$, as $E_{r+1}^{2,1}=\kernel(d_r^{2,1})/\image(d_r^{2-r,r})$ and both maps 
$$d_r^{2,1}\colon E_r^{2,1}\longrightarrow E_r^{2+r,2-r}=0\qquad\text{and}\qquad 
d_r^{2-r,r}\colon E_r^{2-r,r}=0\longrightarrow E_r^{2,1}$$
are the 0-maps for every $r\geq3$.
Applying Proposition~\ref{lem:E}(i) {again} yields
\begin{equation}\label{eq:thm E 2}
 0=E_\infty^{2,1}=E_3^{2,1}=\kernel(d_2^{2,1})/\image(d_2^{0,2}),
\end{equation}
i.e., $\kernel(d_2^{2,1})=\image(d_2^{0,2})$.

Thus, if $(G,\theta)$ has the Bogomolov-Positselski property, then {$I_\theta(G)=K_\theta(G)$} is a free pro-$\ell$ group.
Then $H^t(K_\theta(G),\F_\ell)=0$ for every $t\geq2$ (cf. \cite[Prop.~3.5.17]{nsw:cohn}), and thus $E_r^{0,t}=0$
for all $r\geq 2$ and $t\geq2$.
In particular, the map $d_2^{0,2}\colon H^2(I_\theta(G),\F_\ell)^{ G(\theta)}\to E_2^{2,1}$ is trivial, and {hence} by \eqref{eq:thm E 2}, one has $\kernel(d_2^{2,1})=0$. 
This proves the implication (i)$\Rightarrow$(ii).

Conversely, {if} $d_2^{2,1}$ is injective, then,
 by \eqref{eq:thm E 2}, one has $\image(d_2^{0,2})=\kernel(d_2^{2,1})=0$. Since $d_2^{0,2}$ is injective by \eqref{eq:thm E 1}, this implies that
 $E_2^{0,2}=H^2(I_\theta(G),\F_\ell)^{ G(\theta)}{=0}$.
Since $ G$ is a pro-$\ell$ group, the equality $H^2(I_\theta(G),\F_\ell)^{ G(\theta)}=0$ implies that $H^2(I_\theta(G),\F_\ell)=0$, and thus $I_\theta(G)$ is free by \cite[Prop.~3.5.17]{nsw:cohn}. This proves the implication (ii)$\Rightarrow$(i).
The equivalence between (ii) and (iii) follows from \cite[Thm.~2.4.4]{nsw:cohn}.

Finally, if $I_\theta(G)$ is a free pro-$\ell$ group, one has $E_r^{s,t}=0$  for all $s\geq0$, $t\geq 2$, and $r\geq2$.
Hence, all maps $d_3^{s,t}$ are trivial, for all $s,t\geq0$, so that $E_3^{s,t}=E_\infty^{s,t}$.
\end{proof}

\begin{question}\label{ques:E3}\rm
 Let $(G,\theta)$ be a Kummerian torsion-free pro-$\ell$ group with $G$ being an $H^\bullet$-quadratic pro-$\ell$ group, and let $(E_r^{s,t},d_r^{s,t})$ be the Hochschild-Serre spectral sequence associated to \eqref{eq:ses GK}.
 By Proposition~\ref{lem:E}, for every $s\geq0$ one has $E_3^{s,0}\simeq E_\infty^{s,0}$, and $E_\infty^{s,t}=0$ for $s\geq0$ and $t\geq1$.
 Moreover, by Theorem~\ref{prop:bogo2}, if $(G,\theta)$ has the Bogomolov-Positselski property, then 
 \begin{equation}\label{eq:isomorph condition}
  E_3^{s,t}\simeq E_\infty^{s,t}\qquad\text{for every }s,t\geq0,
 \end{equation}
 i.e., $E_r^{s,t}$ collapses at the $E_3$-page.
It would be interesting to understand whether \eqref{eq:isomorph condition} implies the Bogomolov-Positselski property for $(G,\theta)$.  We suspect that the answer 
should be affirmative. However, we could not find any evidence for this speculation.
\end{question}

\begin{rem}\rm
Let $\K$ be a field containing a primitive $\ell^{\mathrm{th}}$-root of unity (and also $\sqrt{-1}$ if $\ell=2$), put $\mathbb{L}=\sqrt[\ell^\infty]{\K}$ and consider the torsion-free Kummerian oriented pro-$\ell$ group 
$(G_{\K}(\ell),\ttheta_{\K,\ell})$. 
The oriented pro-$\ell$ group $(I_{\ttheta_{\K,\ell}}(G_{\K}(\ell)),\bone)$ 
is again Kummerian and torsion free, and thus one has 
\begin{align}
I_{\ttheta_{\K,\ell}}(G_{\K}(\ell))^\prime&
=K_{\bone}(G_{\LL}(\ell))=G_{\sqrt[\ell^\infty]{\LL}}(\ell)\label{eq:LK1}\\
K_{\ttheta_{\K,\ell}}(G_{\K})^{\ab}&=G_{\mathbb{L}}(\ell)^{\ab}=\Gal(\sqrt[\ell^\infty]{\LL}/\LL),
\label{eq:LK2}
\end{align}
where the latter is a free abelian pro-$\ell$ group (cf. Example~\ref{exam:kummer groups}--(c)).
Hence, the short exact sequence \eqref{eq:ext} translates into
\begin{equation}\label{eq:ext Gal}
 \xymatrix{ \{1\} \ar[r]& \Gal(\sqrt[\ell^\infty]{\mathbb{L}}/\mathbb{L}) \ar[r]^-{\iota} &\Gal(\sqrt[\ell^\infty]{\mathbb{L}}/\K) \ar[r]^-{\pi}& \Gal(\mathbb{L}/\K)\ar[r]& \{1\} }.
\end{equation}
Recall that by Kummer theory one has an isomorphism of (discrete) $\ell$-elementary abelian groups $H^1(\Gal(\sqrt[\ell^\infty]{\mathbb{L}}/\mathbb{L}),\F_\ell)\simeq\mathbb{L}^\times/(\mathbb{L}^\times)^\ell$, where $\mathbb{L}^\times=\mathbb{L}\smallsetminus \{0\}$ denotes the multiplicative group of the field $\mathbb{L}$. 
Then by Theorem~\ref{prop:bogo2} the cohomology element $u\in H_{\mathrm{cts}}^2(\Gal(\mathbb{L}/\K),\Gal(\sqrt[p^\infty]{\mathbb{L}}/\mathbb{L}))$ associated to the extension of pro-$\ell$ groups \eqref{eq:ext Gal} induces a homomorphism
\[
 \phi_{u,\mathbb{L}}\colon H^2\left(\Gal(\mathbb{L}/\K),\mathbb{L}^\times/(\mathbb{L}^\times)^\ell\right)\longrightarrow
 H^4(\Gal(\mathbb{L}/\K),\F_\ell)
\]
which is injective if, and only if, $\mathbb{L}$ satisfies Conjecture~\ref{conj:bogo}.
In view of Theorem~\ref{prop:bogo2}, the knowledge of the structure of $\mathbb{L}^\times/(\mathbb{L}^\times)^\ell$ as continuous $\Gal(\mathbb{L}/\K)$-module, or an arithmetic interpretation of the map $\phi_{u,\mathbb{L}}$, may contribute to the solution of Conjecture~\ref{conj:bogo}.
\end{rem}


\section{Oriented pro-$\ell$ groups of elementary type}
\label{sec:operations}


\subsection{Demushkin groups and one-relator pro-$\ell$ groups} 
\label{ssec:demushkin}

A {\sl Demushkin group} is a Poincar\'e duality pro-$\ell$ group of dimension 2, namely, a pro-$\ell$ group $G$
whose $\F_\ell$-cohomology satisfies the following conditions:
\begin{itemize}
 \item[(i)] $\dim(H^1(G,\F_\ell))<\infty$;
 \item[(ii)] $H^2(G,\F_\ell)\simeq\F_\ell$;
 \item[(iii)] cup-product induces a perfect pairing $H^1(G,\F_\ell)\times H^1(G,\F_\ell)\to H^2(G,\F_\ell)$
\end{itemize}
(cf. \cite[Def.~3.9.9]{nsw:cohn}).
Note that by condition (ii) such a pro-$\ell$ group $G$ has a single defining relation, namely, $G$ may be defined as the quotient $F/N$ of a free pro-$\ell$ group $F$ over a normal subgroup $N\subseteq F$ generated as a normal subgroup of $F$ by a single element contained in $\Phi(F)$ (cf., e.g., \cite[p.~231--232]{nsw:cohn}).


A Demushkin group comes equipped with a distinguished orientation $\eth_G\colon G\to\Z_\ell^\times$, induced by the 
action of $G$ on its dualizing module, described in \cite[Thm.~4]{labute:demushkin}.
The orientation $\eth_G\colon G\to\Z_\ell^\times$ is the only orientation which completes $G$ into a Kummerian oriented pro-$\ell$ group $(G,\eth_G)$ (cf. \cite[Prop.~5.2]{qw:cyclotomic}).
The oriented pro-$\ell$ group $(G,\eth_G)$ enjoys also the Bogomolov-Positselski property.

\begin{thm}
\label{thm:demushkin}
Let $G$ be a Demushkin group, endowed with the canonical orientation $\eth_G\colon G\to\Z_\ell^\times$, and suppose that $\image(\eth_G)\subseteq1+4\Z_2$ if $\ell=2$.
Then the oriented pro-$\ell$ group $(G,\eth_G)$ has the Bogomolov-Positselski property.
\end{thm}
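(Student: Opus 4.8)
The plan is to show directly that $K:=K_{\eth_G}(G)$ is a free pro-$\ell$ group; by \cite[Prop.~3.5.17]{nsw:cohn} this is equivalent to $H^2(K,\F_\ell)=0$. By \cite[Prop.~5.2]{qw:cyclotomic} the orientation $\eth_G$ is the unique one making $G$ Kummerian, so $(G,\eth_G)$ is Kummerian and torsion-free (the latter by hypothesis when $\ell=2$); hence by Corollary~\ref{cor:kummer} and Proposition~\ref{prop:loc uniform} the quotient $\bar G:=G/K=G(\eth_G)$ is locally uniform. Since $K\subseteq\Phi(G)$ we have a surjection $\bar G\twoheadrightarrow G/\Phi(G)\simeq\F_\ell^{\,d}$ with $d=\dim_{\F_\ell}H^1(G,\F_\ell)\geq 2$, so $\bar G$ is a nontrivial locally uniform --- hence infinite --- pro-$\ell$ group, and therefore $[G:K]=\infty$.

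Next I would pass to open subgroups. As $K$ is closed it equals the intersection of the (directed) family of open subgroups $U\leq G$ containing it, so by the continuity of profinite group cohomology
\[
 H^2(K,\F_\ell)\;\cong\;\varinjlim_{K\subseteq U\ \mathrm{open}}H^2(U,\F_\ell),
\]
the transition maps being the restrictions $\Res^2_{U,U'}\colon H^2(U,\F_\ell)\to H^2(U',\F_\ell)$ for $K\subseteq U'\subseteq U$. Each such $U$ is open in the Demushkin group $G$, hence is itself a Demushkin group (cf. \cite[\S~3.9]{nsw:cohn}); in particular $H^2(U,\F_\ell)\simeq\F_\ell$, spanned by the fundamental class, and $U$ is a Poincar\'e duality pro-$\ell$ group of dimension $2$ whose dualizing module is trivial modulo $\ell$ (the reduction mod $\ell$ of any continuous character $U\to\Z_\ell^\times$ is trivial: for $\ell$ odd since $U$ is pro-$\ell$, for $\ell=2$ since $\F_2^\times$ is trivial).

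The key point --- and the step I expect to require the most care --- is that all these restriction maps vanish. Fix open subgroups $K\subseteq U'\subsetneq U\leq G$. Since $U$ is pro-$\ell$ and $U'$ is a proper open subgroup, $[U:U']$ is a positive power of $\ell$. By Poincar\'e duality, corestriction $\mathrm{cor}^2_{U',U}\colon H^2(U',\F_\ell)\to H^2(U,\F_\ell)$ is an isomorphism (both sides are one-dimensional and, the dualizing modules being trivial mod $\ell$, $\mathrm{cor}$ sends the fundamental class of $U'$ to that of $U$), while $\mathrm{cor}^2_{U',U}\circ\Res^2_{U,U'}=[U:U']\cdot\mathrm{id}_{H^2(U,\F_\ell)}$; as $\ell\mid[U:U']$ this composite is $0$, whence $\Res^2_{U,U'}=0$. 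Because $[G:K]=\infty$, for every open $U\supseteq K$ there is an open $U'$ with $K\subseteq U'\subsetneq U$, so every class in $H^2(U,\F_\ell)$ is killed further down the system; the colimit is therefore $0$. Thus $H^2(K,\F_\ell)=0$, $K=K_{\eth_G}(G)$ is free pro-$\ell$, and $(G,\eth_G)$ has the Bogomolov property. (For $\ell=2$ the hypothesis $\image(\eth_G)\subseteq1+4\Z_2$ is what makes $(G,\eth_G)$ torsion-free, so that the whole set-up of $K_{\eth_G}(G)$ and $G(\eth_G)$ is available; the cohomological computation itself is insensitive to it.)

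An alternative, once the classification of Demushkin groups is invoked, would be to identify $G(\eth_G)$ explicitly --- it is $\Z_\ell^{\,d-1}\rtimes\Z_\ell$ (respectively $\Z_\ell^{\,d}$, when $\eth_G$ is trivial) --- and to deduce the Bogomolov property from the criterion of Theorem~\ref{prop:bogo2} by checking that $d_2^{2,1}$ is injective; but that route requires analysing $H^1(K_{\eth_G}(G),\F_\ell)$ as a $G(\eth_G)$-module, which the colimit argument sidesteps.
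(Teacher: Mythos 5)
Your argument is correct and is essentially the paper's: both reduce the statement to the fact that $K_{\eth_G}(G)$ is a closed subgroup of infinite index in the Demushkin group $G$, hence free. The only difference is that the paper simply cites \cite[\S~I.4.5, Exercise~5(b)]{serre:gal} for that last implication, whereas you reprove it inline via the colimit of $H^2(U,\F_\ell)$ over open $U\supseteq K$ and the vanishing of restriction forced by $\mathrm{cor}\circ\Res=[U:U']\cdot\mathrm{id}$ together with Poincar\'e duality.
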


\begin{proof}
Since $(G,\eth_G)$ is Kummerian, by \cite[Prop.~5.2]{qw:cyclotomic}, Proposition~\ref{prop:kummer}(iii) and Remark~\ref{rem:thetabel}, one has $G/I_{\eth_G}(G)\simeq\Z_\ell^{d-1}\rtimes\Z_\ell$, with $d=\dim(H^1(G,\F_\ell))$.
Therefore, {$I_{\eth_G}(G)=K_{\eth_G}(G)$} is a subgroup of $G$ of infinite index, and thus it is a free pro-$\ell$ group
 by \cite[\S~I.4.5, Exercise~5(b)]{serre:gal}. 
\end{proof}

As mentioned above, Demushkin groups have a single defining relation.
One may prove the Bogomolov-Positselski property also for $1$-relator pro-$\ell$ groups $G$ with quadratic $\F_\ell$-cohomology which can be completed into a Kummerian oriented pro-$\ell$ group $(G,\mathbf{1})$ with a trivial orientation.

\begin{prop}\label{prop:onerel}
 Let $G$ be a finitely generated pro-$\ell$ group with a single defining relation such that 
 \begin{itemize}
  \item[(i)] $H^\bullet(G,\F_\ell)$ is a quadratic algebra;
\item[(ii)] $(G,\mathbf{1})$ is Kummerian.
 \end{itemize}
Then $(G,\bone)$ has the Bogomolov-Positselski property.
\end{prop}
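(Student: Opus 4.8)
The plan is to exploit the interplay between the $1$-relator structure, the quadraticity of $H^\bullet(G,\F_\ell)$, and the Kummerian condition via the cohomological criterion from Theorem~\ref{prop:bogo2}. Since $(G,\bone)$ is Kummerian and $\theta=\bone$, we have $K_{\bone}(G)=G'$, and $(G,\bone)$ has the Bogomolov property exactly when $G'$ is a free pro-$\ell$ group, equivalently when $H^2(G',\F_\ell)=0$. First I would record what the hypotheses give at the level of generators and relations: writing $d=\dim H^1(G,\F_\ell)$, Kummerianity forces $G^{\ab}=G/G'$ to be a free abelian pro-$\ell$ group of rank $d$ (Example~\ref{exam:kummer groups}--(c)), so $\bar G=G/G'\simeq \Z_\ell^d$ is locally uniform with $H^\bullet(\bar G,\F_\ell)\simeq\Lambda^\bullet(H^1(G,\F_\ell))$. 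Since $G$ has a single defining relation, $H^2(G,\F_\ell)\simeq\F_\ell$; quadraticity then says $H^\bullet(G,\F_\ell)\simeq \Lambda^\bullet(H^1(G,\F_\ell))/\langle W\rangle$ where $W=\kernel(\psi_2)$ is one-dimensional, spanned by a single nonzero element $\omega\in\Lambda^2(H^1(G,\F_\ell))$. Because $H^\bullet(G,\F_\ell)$ is a quadratic algebra of this Koszul-type shape, I expect in particular $H^3(G,\F_\ell)=0$ to fail in general, so I cannot simply invoke a dimension-shift argument; the real work is in the spectral sequence.

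The core step is to apply the equivalence (i)$\Leftrightarrow$(ii) of Theorem~\ref{prop:bogo2}: it suffices to show the differential $d_2^{2,1}\colon E_2^{2,1}\to E_2^{4,0}\simeq\Lambda^4(H^1(G,\F_\ell))$ in the Hochschild--Serre spectral sequence of $\{1\}\to G'\to G\to \bar G\to\{1\}$ is injective. By \eqref{eq:cup product E} and Proposition~\ref{lem:E}, $\image(d_2^{0,1})=\kernel(\psi_2)=\langle\omega\rangle$, and for $\alpha\in E_2^{s,0}\simeq\Lambda^s(H^1(G,\F_\ell))$, $\beta\in E_2^{0,1}$ one has $d_2^{s,1}(\alpha\cup\beta)=\pm\,\alpha\wedge d_2^{0,1}(\beta)$; but $E_2^{s,1}$ need not be generated by such products, so I must control $H^1(G',\F_\ell)^{\bar G}$ as a module. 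Here is where the one-relator hypothesis enters decisively: for a one-relator pro-$\ell$ group the relation module $G'/[G',G']$ — equivalently $H^1(G',\F_\ell)$ as a $\Z_\ell\dbl\bar G\dbr$-module after abelianizing — is cyclic, generated (over the completed group algebra, or over $\F_\ell[\![\bar G]\!]$ after reduction) by a single element whose annihilator is governed by the image of the relator in $\bar G$. Concretely, following the description in Remark~\ref{rem:ext 2cocycle}, the class $u\in H^2_{\mathrm{cts}}(\bar G,K_{\bone}(G)^{\ab})$ classifying \eqref{eq:ext} has a very constrained form because $K_{\bone}(G)^{\ab}$ is a cyclic module, and the criterion (iii) of Theorem~\ref{prop:bogo2} — that $\phi_u(\alpha)=u\cup\alpha\neq0$ for all nonzero $\alpha\in E_2^{2,1}$ — becomes a statement about cup-products in $H^\bullet(\bar G,\F_\ell)=\Lambda^\bullet(H^1(G,\F_\ell))$ against a fixed element, which I would verify using the quadraticity: the defining relation $\omega$ being a nonzero quadratic form, multiplication $\Lambda^2\xrightarrow{\omega\wedge(-)}\Lambda^4$ against a suitable lift of $u$ is injective on the relevant piece precisely because $\omega\neq 0$ and the exterior algebra is an integral-domain-like "universal" object in low degrees.

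An alternative, possibly cleaner route I would try in parallel: use the known classification of one-relator pro-$\ell$ groups with quadratic cohomology. By results of Snopce--Zalesskii (and the joint work \cite{qsv:quadratic} cited in the acknowledgment), a finitely generated one-relator pro-$\ell$ group with quadratic $\F_\ell$-cohomology is either free (relation in the cube of the augmentation ideal forces non-quadraticity, so this is the degenerate case), or a Demushkin group, or a free pro-$\ell$ product of such. If $(G,\bone)$ is Kummerian and $G$ is Demushkin, then the orientation $\bone$ would have to coincide with $\eth_G$, which forces $\eth_G$ trivial, i.e.\ $G\simeq\Z_\ell^2$ or a very restricted Demushkin group, and in all these cases $G'$ is free by Theorem~\ref{thm:demushkin} (a subgroup of infinite index in a Poincaré duality group of dimension $2$) or trivially. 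The free-product case reduces via the standard fact that the "commutator-type" kernel in a free product of groups with free such kernels is again free (a Mayer--Vietoris / Kurosh-type argument). I expect the main obstacle to be the free-product bookkeeping: one must show $K_{\bone}(G_1\amalg G_2)$ is free given $K_{\bone}(G_i)$ free, which requires knowing how $K_{\bone}$ interacts with free pro-$\ell$ products — this is exactly the kind of closure property that Theorem~\ref{thm:one} ultimately needs, so the cleanest writeup probably proves Proposition~\ref{prop:onerel} as a special case of, or in tandem with, the structural results feeding Theorem~\ref{thm:one}, rather than in isolation.
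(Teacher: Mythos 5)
Neither of your two routes closes, so there is a genuine gap. In the first route everything hinges on the injectivity of $d_2^{2,1}$ (equivalently of $\phi_u$), and the mechanism you offer --- that wedging against the nonzero quadratic relation $\omega$ is ``injective on the relevant piece because the exterior algebra is an integral-domain-like object in low degrees'' --- does not work. Multiplication by a fixed nonzero element of $\Lambda^2(H^1(G,\F_\ell))$ from $\Lambda^2$ to $\Lambda^4$ has a large kernel in general, and is even the zero map when $\dim H^1(G,\F_\ell)=3$ (e.g.\ for $G\simeq\Z_\ell^2\amalg\Z_\ell$, where $\Lambda^4=0$); in such cases injectivity of $d_2^{2,1}$ forces $E_2^{2,1}=H^2(\bar G,H^1(G',\F_\ell))$ itself to vanish, which is a statement about the $\bar G$-module $H^1(G',\F_\ell)$ that you never compute. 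In the second route you invoke a classification of finitely generated one-relator pro-$\ell$ groups with quadratic cohomology as free pro-$\ell$ products of free and Demushkin groups; no such classification is established in this paper or in the sources it cites, and it is not what the argument needs.

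The paper's proof rests on a single input you did not use: W\"urfel's theorem \cite[Cor.~2]{wurfel}. Since $G$ is one-relator, $H^2(G,\F_\ell)\simeq\F_\ell$, and quadraticity makes the cup product $H^1\times H^1\to H^2$ nontrivial, so W\"urfel produces an extension $\{1\}\to N\to G\to \bar G\to\{1\}$ with $N$ free, $\bar G$ Demushkin, and inflation $H^2(S/N,\F_\ell)\to H^2(S,\F_\ell)$ an isomorphism for every closed subgroup $S\supseteq N$ --- note this is an \emph{extension}, not a free-product decomposition. One then checks via the five-term sequence and \cite[Thm.~5.6]{cq:1smooth} that $(\bar G,\bone)$ is Kummerian, so $\eth_{\bar G}=\bone$ and $\bar G'$ is free by Theorem~\ref{thm:demushkin}; taking $S$ to be the preimage of $\bar G'$ gives $H^2(S,\F_\ell)\simeq H^2(\bar G',\F_\ell)=0$, hence $S$ is free, and $G'\subseteq S$ is free because $\cd_\ell(G')\le\cd_\ell(S)=1$. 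Your analysis of the Demushkin case and your appeal to free-product closure of the Bogomolov property are sound ingredients, but without W\"urfel's theorem (or an actual computation of $E_2^{2,1}$) the proof does not go through.
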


\begin{proof}
 Since $(G,\bone)$ is Kummerian, the quotient $G^{\ab}$ is a free abelian pro-$\ell$ group (cf. Example~\ref{exam:kummer groups}(c)).
We need to show that $G^\prime=K_{\mathbf{1}}(G)=I_{\mathbf{1}}(G)$ is a free pro-$\ell$ group.

Since $G$ has a single defining relation, $H^2(G,\F_\ell)\simeq\F_\ell$ (cf. \cite[Cor.~3.9.5]{nsw:cohn}).
Moreover, since $H^\bullet(G,\F_\ell)$ is quadratic, $H^2(G,\F_\ell)$ is generated by cup products $\chi\cup\psi$ with $\chi,\psi\in H^1(G,\F_\ell)$, so that the cup product from $H^1(G,\F_\ell)$ to $H^2(G,\F_\ell)$ is not trivial (see also \cite[Prop.~4.2]{cq:onerel}).
Consequently, \cite[Cor.~2]{wurfel} yields a short exact sequence of pro-$\ell$ groups
\[
 \xymatrix{\{1\}\ar[r] & N\ar[r] & G\ar[r] &\bar G\ar[r] &\{1\}}
\]
which satisfies the following three properties: $N$ is a free pro-$\ell$ group; $\bar G$ is a Demushkin group; and for every subgroup $S$ of $G$ containing $N$, the inflation map
\begin{equation}\label{eq:wurfel}
 \mathrm{inf}_{S,N}^2\colon H^2(S/N,\F_\ell)\longrightarrow H^2(S,\F_\ell)
\end{equation}
is an isomorphism (this last property is shown to hold in the proof of \cite[Cor.~2]{wurfel}).

Since $G$ is finitely generated, also $\bar G$ is finitely generated. 
Moreover, by \eqref{eq:wurfel} the inflation map $H^2(\bar G,\F_\ell)\to H^2(G,\F_\ell)$ is an isomorphism, and thus by the five-terms exact sequence (cf. \cite[Prop.~1.6.7]{nsw:cohn}) the restriction map 
$$\mathrm{res}_{G,N}^1\colon H^1(G,\F_\ell)\longrightarrow H^1(N,\F_\ell)^G$$ is surjective.
Since $(G,\mathbf{1})$ is Kummerian, and since $\mathrm{res}_{G,N}^1$ is surjective, \cite[Thm.~1.2]{cq:1smooth} implies that also the oriented pro-$\ell$ group $(\bar G,\mathbf{1})=(G,\mathbf{1})/N$ is Kummerian.
Hence, the canonical orientation $\eth_{\bar G}\colon \bar G\to\Z_\ell^\times$ must coincide with the trivial orientation $\bone$ (cf. \cite[Proposition~5.2]{qw:cyclotomic}).
By Theorem~\ref{thm:demushkin}, the oriented pro-$\ell$ group $(\bar G,\mathbf{1})$ has the Bogomolov-Positselski property, and thus $K_{\bone}(\baG)$ --- which coincides with $\baG^\prime$ --- is a free pro-$\ell$ group.

Let $S$ be the normal subgroup of $G$ containing $N$ such that $S/N\simeq \bar G^\prime$.
Thus, $G/S\simeq \bar G/\bar G^\prime$ is abelian, and therefore $S\supseteq G^\prime$.
By \eqref{eq:wurfel}, one has $H^2(S/N,\F_\ell)\simeq H^2(S,\F_\ell)$, and the term on the left-hand side is trivial as $S/N$ is a free pro-$\ell$ group.
Hence, also $H^2(S,\F_\ell)=0$, and $S$ is a free pro-$\ell$ group (cf. \cite[Prop.~3.5.17]{nsw:cohn}).
Since $G^\prime\subseteq S$, and $\cd_\ell(G^\prime)\leq \cd_\ell(S)=1$, $G^\prime$ must be free 
(cf. \cite[\S~3.3, Proposition 14]{serre:gal}).
\end{proof}

\begin{rem}\rm
 Let $F$ be a finitely generated free pro-$\ell$ group, let $r$ be an element of $\Phi(F)$ and let $R$ denote the normal subgroup of $F$ generated by $r$.
Suppose that $\ell\neq2$. By \cite[Prop.~4.2]{cq:onerel} and Example~\ref{exam:kummer groups}(c), the pro-$\ell$ group $G=F/R$ satisfies the conditions (i)--(ii) in Proposition~\ref{prop:onerel} if, and only if, $r\in F'$ and $r\notin F^p\cdot[F',F]$.
\end{rem}

\subsection{Free constructions}
\label{ss:free}
By \cite[\S~3]{efrat:small}, the {\sl free product} of two oriented pro-$\ell$ groups $(G_1,\theta_1)$ and $(G_2,\theta_2)$ is the oriented pro-$\ell$ group $(G,\theta)$ where $G$ is the free pro-$\ell$ product of $G_1,G_2$, and $\theta\colon G\to\Z_\ell^\times$ is the orientation induced by $\theta_1,\theta_2$ via the universal property of $G$ (see also \cite[\S~3.4]{qw:cyclotomic}).

One may extend the above definition to {\sl free amalgamated pro-$\ell$ products} of oriented pro-$\ell$ groups (we refer to \cite[\S~9.2]{ribzal:book} for the definition of free amalgamated pro-$\ell$ products).

\begin{defin}\label{defi:amalg}\rm
 Let $(G_1,\theta_1)$ and $(G_2,\theta_2)$ be two oriented pro-$\ell$ groups such that $G_1$ and $G_2$ have a common subgroup $H\subseteq G_1,G_2$ satisfying $\theta_1\vert_H=\theta_2\vert_H$.
The {\sl amalgamated pro-$\ell$ product of oriented pro-$\ell$ groups} of $(G_1,\theta_1)$ and $(G_2,\theta)$ with amalgamation
in $H$ is the 
oriented pro-$\ell$ group
$(G,\theta)=(G_1,\theta_1)\amalg_H^{\hat{\ell}}(G_2,\theta_2)$, where
$G=G_1\amalg_H^{\hat{\ell}} G_2$ is the free amalgamated pro-$\ell$ product of $G_1$ and $G_2$ over $H$,
and $\theta\colon G\to\Z_\ell^\times$ is the orientation which makes the diagram
\[ \xymatrix@R=0.7truecm{ H\ar[rr]\ar[d] && G_1\ar[d]_-{\varphi_1}\ar@/^1pc/[rdd]^-{\theta_1} & \\
G_2\ar[rr]^-{\varphi_2}\ar@/_1pc/[rrrd]_-{\theta_2} && G\ar@{-->}[rd]|\theta & \\ & && \Z_\ell^\times
}\]
commute.
\end{defin}

Note that the morphisms $\varphi_1$ and $\varphi_2$ may not be injective  (cf. \cite[p.~369]{ribzal:book}).
If they are, the free amalgamated pro-$\ell$ product is said to be {\sl proper}.

If $H=\{1\}$, then $(G_1,\theta_1)\amalg_H^{\hat \ell}(G_2,\theta_2)$ coincides with the free product of oriented pro-$\ell$ groups. In this case we {simply} write $(G_1,\theta_1)\amalg^{\hat \ell}(G_2,\theta_2)$, instead of $(G_1,\theta_1)\amalg_{\{1\}}^{\hat \ell}(G_2,\theta_2)$.
Free products of oriented pro-$\ell$ groups preserve Kummerianity (cf. \cite[Prop.~7.5]{eq:kummer}).

\begin{prop}\label{prop:freeprod kummer}
 Let $(G_1,\theta_1)$ and $(G_2,\theta_2)$ be two Kummerian oriented pro-$\ell$ groups.
Then the free product  $(G_1,\theta_1)\amalg^{\hat \ell}(G_2,\theta_2)$ is again Kummerian.
\end{prop}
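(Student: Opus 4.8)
The plan is to verify the defining condition of Kummerianity for the free product $(G,\theta)=\calG_1\amalg^{\hat\ell}\calG_2$, where $G=G_1\amalg^{\hat\ell}G_2$, by a cohomological diagram chase that isolates the Kummerian hypothesis on each factor. Two preliminary observations will be needed. First, $(G,\theta)$ is again torsion-free in the sense of \S\ref{ss:or}: if $\ell=2$ then $\image(\theta_1),\image(\theta_2)\subseteq1+4\Z_2$, and $\image(\theta)$, being the closed subgroup of $\Z_2^\times$ generated by these, also lies in $1+4\Z_2$. Second, since $\theta\vert_{G_i}=\theta_i$, the restriction along $G_i\hookrightarrow G$ of the continuous $G$-module $\Z_\ell(1)$ attached to $\theta$ is precisely the continuous $G_i$-module $\Z_\ell(1)$ attached to $\theta_i$, so restriction maps on cohomology with $\Z_\ell(1)/\ell^n$-coefficients are available.

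Fixing $n\geq1$, I would consider the commutative square induced by the inclusions $G_1,G_2\hookrightarrow G$ and by the reduction $\Z_\ell(1)/\ell^n\twoheadrightarrow\Z_\ell(1)/\ell\simeq\F_\ell$:
\[
\begin{array}{ccc}
H^1(G,\Z_\ell(1)/\ell^n) & \longrightarrow & H^1(G,\F_\ell)\\
\downarrow & & \downarrow\\
H^1(G_1,\Z_\ell(1)/\ell^n)\oplus H^1(G_2,\Z_\ell(1)/\ell^n) & \longrightarrow & H^1(G_1,\F_\ell)\oplus H^1(G_2,\F_\ell)
\end{array}
\]
with vertical arrows $(\Res_{G,G_1},\Res_{G,G_2})$ and horizontal arrows the reduction maps. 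The claim to prove is threefold: (a) the right vertical arrow is an isomorphism; (b) the left vertical arrow is surjective; (c) the bottom arrow is surjective. Granting (a)--(c), the composite of the left and bottom arrows is surjective, hence so is the composite of the top and right arrows; since the right arrow is an isomorphism, the top arrow $H^1(G,\Z_\ell(1)/\ell^n)\to H^1(G,\F_\ell)$ is surjective. As $n$ is arbitrary, $(G,\theta)$ is Kummerian.

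It remains to establish (a)--(c). Item (c) is immediate: each $H^1(G_i,\Z_\ell(1)/\ell^n)\to H^1(G_i,\F_\ell)$ is surjective because $\calG_i$ is Kummerian, and a finite direct sum of surjections is surjective. For (a), since $G$ is topologically generated by $G_1$ and $G_2$, a continuous homomorphism $G\to\F_\ell$ is determined by its restrictions to the $G_i$, while conversely any pair of continuous homomorphisms $G_i\to\F_\ell$ extends uniquely to $G$ by the universal property of the free pro-$\ell$ product (equivalently $G/\Phi(G)\cong G_1/\Phi(G_1)\times G_2/\Phi(G_2)$); hence restriction identifies $H^1(G,\F_\ell)$ with $H^1(G_1,\F_\ell)\oplus H^1(G_2,\F_\ell)$. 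For (b), writing $M=\Z_\ell(1)/\ell^n$, I would use that $H^1(-,M)$ parametrizes conjugacy classes of complements of $M$ in $M\rtimes(-)$, i.e.\ of continuous sections of the projection: given sections $s_i\colon G_i\to M\rtimes G_i\hookrightarrow M\rtimes G$, the universal property of $G_1\amalg^{\hat\ell}G_2$ yields a continuous homomorphism $s\colon G\to M\rtimes G$ with $s\vert_{G_i}=s_i$; composing with $M\rtimes G\to G$ gives an endomorphism of $G$ that is the identity on $G_1$ and on $G_2$, hence equals $\id_G$, so $s$ is itself a section and defines a class in $H^1(G,M)$ restricting to those of the $s_i$. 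Both (a) and (b) are also instances of the Mayer--Vietoris sequence for free pro-$\ell$ products, which moreover gives $H^j(G,M)\cong H^j(G_1,M)\oplus H^j(G_2,M)$ for $j\geq2$.

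The bulk of the argument is this formal diagram-chase; the one input that deserves care — and which I regard as the crux — is the cohomology of a free pro-$\ell$ product in degrees $\leq1$, namely that restriction to the two factors is an isomorphism on $H^1(-,\F_\ell)$ and at least surjective on $H^1(-,M)$ for finite $M$. I would establish this directly from the universal property of $\amalg^{\hat\ell}$ as sketched above, or quote the standard Mayer--Vietoris package for free pro-$\ell$ products, or indeed invoke \cite[Prop.~7.5]{eq:kummer}, of which this proposition is essentially a restatement.
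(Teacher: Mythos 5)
Your argument is correct. Note first that the paper does not actually prove Proposition~\ref{prop:freeprod kummer}: it is stated with a bare citation of \cite[Prop.~7.5]{eq:kummer}, so there is no in-paper proof to compare against. Judged on its own merits, your diagram chase is sound: the commutativity of the square is legitimate because $\theta\vert_{G_i}=\theta_i$, so the restricted $G_i$-module structure on $\Z_\ell(1)/\ell^n$ is the right one; item (a) is the universal property of $\amalg^{\hat\ell}$ applied to the trivial module $\F_\ell$ (trivial since $\image(\theta)\subseteq 1+\ell\Z_\ell$, resp.\ $1+4\Z_2$); item (b) is the surjectivity half of the Mayer--Vietoris sequence for a free pro-$\ell$ product, which your torsor/section argument establishes directly; and item (c) is exactly where the Kummerian hypothesis on the factors enters. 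Your preliminary check that the free product is again torsion-free in the sense of \S\ref{ss:or} is also needed for the statement to make sense and is correctly handled. This is essentially the standard route (and, as you suspect, the substance of the cited result), in contrast to the alternative one could imagine via Proposition~\ref{prop:kummer}(ii), i.e.\ computing $K_\theta(G)$ and $\kernel(\theta)/K_\theta(G)$ for the free product directly; the cohomological formulation you chose avoids any structure theory of subgroups of free products and only uses degree-one cohomology.
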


We prove that --- under certain conditions --- if the free amalgamated pro-$\ell$ product of two Kummerian oriented 
pro-$\ell$ groups with the Bogomolov-Positselski property is again Kummerian, then it has also the Bogomolov-Positselski property.

\begin{thm}\label{thm:amalg bogo}
 Let $(G_1,\theta_1)$ and $(G_2,\theta_2)$ be torsion free Kummerian oriented pro-$\ell$ groups with the Bogomolov-Positselski property,
with common finitely generated subgroup $U=G_1\cap G_2$ such that $\theta_1\vert_U=\theta_2\vert_U$ and that 
$(U,\theta_U)$ is $\theta_U$-abelian, where $\theta_U=\theta_i\vert_U$ for $i=1,2$.
Suppose that
\begin{itemize}
 \item[(i)] the amalgamated pro-$\ell$ product $(G,\theta)=(G_1,\theta_1)\amalg_U^{\hat \ell}(G_2,\theta_2)$ is Kummerian; 
 \item[(ii)] the restriction maps
$$\mathrm{res}_{G,G_i}^1\colon H^1(G,\F_\ell)\to H^1(G_i,\F_\ell)\qquad\text{and}\qquad\mathrm{res}_{G_i,U}^1\colon H^1(G_i,\F_\ell)\to H^1(U,\F_\ell)$$
are surjective for both $i=1,2$.
\end{itemize}
Then $(G,\theta)$ has the Bogomolov-Positselski property.
\end{thm}

\begin{rem}\label{rem:thm amalg bogo}\rm
 \begin{itemize}
  \item[(a)] If $U$ in the statement of Theorem~\ref{thm:amalg bogo} is the trivial group, then $(G,\theta)$ is the usual free product of  oriented pro-$\ell$ groups, and the two conditions are satisfied by $(G,\theta)$.
For condition~(i), see Proposition~\ref{prop:freeprod kummer}, and condition~(ii) is trivially satisfied.
  Hence, the Bogomolov-Positselski property is preserved by free products of oriented pro-$\ell$ groups.
  
 \item[(b)] By duality, for $i\in\{1,2\}$ the map $\mathrm{res}_{G,G_i}^1$, respectively the map $\mathrm{res}_{G_i,U}^1$, is surjective if, and only if, the map $\bar\iota_{i}\colon G_i/\Phi(G_i)\to G/\Phi(G)$ induced by the inclusion $\iota_{i}\colon U\hookrightarrow G_i$, respectively the map $\bar\iota_{U,i}\colon U/\Phi(U)\to G_i/\Phi(G_i)$ induced by the inclusion $\iota_{U,i}\colon U\hookrightarrow G_i$, is injective.
 
 
 \end{itemize}
\end{rem}

\begin{proof}

By \cite[Thm~A]{cq:bk}, $U$ is a {\sl uniformly powerful pro-$\ell$ group}, and therefore \cite[Prop.~5.22]{qsv:quadratic} implies that $G=G_1\amalg_U^{\hat \ell}G_2$ is a proper amalgam. 
Moreover, by hypothesis one has the monomorphisms of $\ell$-elementary abelian groups $\bar\iota_i$ and $\bar\iota_{U,i}$, with $i=1,2$ (cf. Remark~\ref{rem:thm amalg bogo}(b)).
Hence, also $\bar\iota_U=\bar\iota_i\circ\bar\iota_{U,i}\colon U/\Phi(U)\to G/\Phi(G)$ is injective for both $i=1,2$.

Let $\iota_U\colon U\hookrightarrow G$ be the inclusion of $U$ in $G$, and for $i=1,2$, set 
\[\begin{split}
   \psi_{U}=\pi_{G,\theta}^{\ab}\circ\iota_{U}&\colon U\longrightarrow G(\theta)=G/K_\theta(G),\\
 \psi_i=\pi_{G,\theta}^{\ab}\circ\iota_i&\colon G_i\longrightarrow G(\theta)=G/K_\theta(G).
  \end{split}\]
Then 
\begin{equation}\label{eq:kernels amalg}
 \kernel(\psi_{U})=U\cap K_{\theta}(G)\qquad\text{and}\qquad \kernel(\psi_i)=G_i\cap K_\theta(G).
\end{equation}
Now consider the commutative diagram
\begin{equation}\label{eq:commdiag amalgam}
 \xymatrix@R=0.6truecm{ &&&& \\
    U\ar@{.>}[rrrrd]\ar@{->>}@/^2pc/[rrrrrr]\ar[rr]|-{\iota_{U,1}}\ar[rrdd]|-{\iota_{U,2}}\ar[rrd]|-{\iota_U}
                                        && G_1\ar[d]_-{\iota_1}\ar@{->>}[rr]\ar[drr]|-{\psi_1} && G_1/\Phi(G_1)\ar[rd]|-{\bar\iota_1}
                                        && U/\Phi(U)\ar@/^2pc/[lldd]|-{\bar\iota_{U,2}}\ar@/^/[ll]|-{\bar\iota_{U,1}}
                                        \ar@/^/[dl]|-{\bar\iota_U} \\
   && 
  G\ar@{->>}[rr]|-{\pi_{G,\theta}^{\ab}}  && G(\theta)\ar@{->>}[r]     & G/\Phi(G) & \\
                                        && G_2\ar[u]^-{\iota_2}\ar@{->>}[rr]\ar[urr]|-{\psi_2} && G_2/\Phi(G_2)\ar[ru]|-{\bar\iota_2} & & }
\end{equation} 
where the dotted arrow from $U$ to $G(\theta)$ is $\psi_U$.
By Remark~\ref{rem:pthetabel subgroups}, the oriented pro-$\ell$ groups $(\image(\psi_U),\theta\vert_{\image(\psi_U)})$ and $(\image(\psi_i),\theta\vert_{\image(\psi_i)})$ are  $\theta\vert_{\image(\psi_U)}$- and $\theta\vert_{\image(\psi_i)}$-abelian, {respectively}. In particular, 
\begin{equation}\label{eq:ker psi i}
 \kernel(\psi_i)\supseteq I_{\theta_i}(G_i)=K_{\theta_i}(G_i),
\end{equation}
where the left-hand side inclusion follows by Proposition~\ref{prop:maxTh}, and the right-side equality follows by Proposition~\ref{prop:kummer}(iv), as $(G_i,\theta_i)$ is Kummerian for $i\in\{1,2\}$ by hypothesis.
Consequently, the pro-$\ell$ groups $\image(\psi_U)$ and $\image(\psi_i)$ are torsion-free, so that $\kernel(\psi_U)$ and $\kernel(\psi_i)$ are self-isolated subgroups of $U$ and $G_i$ respectively.
On the other hand, by duality one has $\kernel(\psi_U)\subseteq\Phi(U)$ and $\kernel(\psi_i)\subseteq\Phi(G_i)$, as the maps $\bar\iota_U$ and $\bar\iota_i$ are injective.
Altogether, by \eqref{eq:kernels amalg} and \eqref{eq:ker psi i} one has
$$K_{\theta\vert_U}(U)=\{1\}\subseteq U\cap K_{\theta}(G)\subseteq\Phi({U})\qquad\text{and}\qquad 
K_{\theta_i}(G_i)\subseteq G_i\cap K_{\theta}(G)\subseteq \Phi(G_i),$$
and thus $\{1\}= U\cap K_{\theta}(G)$ and $K_{\theta_i}(G_i)= G_i\cap K_{\theta}(G)$ by Proposition~\ref{prop:fratK}.

Now, let $\mathcal{T}=(\euV(\mathcal{T}),\euE(\mathcal{T}))$ be the 
pro-$\ell$ tree whose vertices and edges are given by 
\[
 \euV(\mathcal{T})=\{\:gG_{1},gG_{2}\:\mid\:g\in G\}\quad\text{and}\quad
 \euE(\mathcal{T})=\{\,gU,\overline{gU}\mid g\in G\,\},
\]
respectively. {In particular, every edge $gU\in\euE(\mathcal{T})$ defines an origin, the $G_1$-coset $gG_1$ 
and a terminus, the $G_2$-coset $gG_2$. For $\overline{gU}\in\euE(\mathcal{T})$ the roles of the terminus and origin
are interchanged.}
Then $\mathcal{T}$ is a second countable pro-$\ell$ tree, with a natural $G$-action (cf. \cite[Example~6.2.3]{ribes:book}).
For $v=gG_i\in\euV(\mathcal{T})$ and $\eue=hU\in\euE(\mathcal{T})$, with $g,h\in G$ and $i\in\{1,2\}$, let $K_v$ and $K_{\eue}$ denote  the stabilizers of $v$ and $\eue$ in $K_\theta(G)$, respectively.
Hence 
\[   \begin{split}
      K_v &=\{\:x\in K_\theta(G)\:\mid\:xg\in g G_i\:\}= K_\theta(G)\cap gG_ig^{-1},\\ 
      K_{\eue}&=\{\:x\in K_\theta(G)\:\mid\:xh\in h U\:\}= K_\theta(G)\cap hUh^{-1}.
     \end{split}\]
Since $K_\theta(G)$ is a normal subgroup of $G$, for every $v=gG_i\in\euV(\mathcal{T})$ the subgroup $K_v$ is isomorphic to $K_\theta(G)\cap G_i=K_{\theta_i}(G_i)$, which is free by hypothesis; while for every $\eue=hU\in\euE(\mathcal{T})$ the subgroup $K_{\eue}$ is equal to $\{1\}$, and hence no non-trivial element of $K_\theta(G)$ stabilizes an edge.
Therefore, by \cite[Thm.~5.6]{melnikov:freeprod}, $K_\theta(G)$  has the following decomposition as free pro-$\ell$ product:
\begin{equation}
\label{eq:deco}
 K_\theta(G)=\left(\coprod_{v\in\euV'} K_v\right)\amalg F,
\end{equation}
for some subset $\euV'$ of $\euV(\mathcal{T})$, where $F$ is a free pro-$\ell$ group.
Hence $K_\theta(G)$ is the free pro-$\ell$ product of free pro-$\ell$ groups, and thus it is a free pro-$\ell$ group as well.
\end{proof}



\begin{exam}\label{ex:square diag}\rm
 Let $(G_1,\theta_1)$ and $(G_2,\theta_2)$ be the oriented pro-$\ell$ groups with 
 \[\begin{split}
    G_1&=\left\langle\: x, y_1,y_3\:\mid\:[y_1,y_3]=1,\:{}^xy_j=y_j^{1+\ell},\;\forall\:j\in\{1,3\}\:\right\rangle
    \simeq\Z_\ell^2\rtimes\Z_\ell, \\
    G_2&=\left\langle\: x, y_2,y_3\:\mid\:[y_2,y_3]=1,\:{}^xy_j=y_j^{1+\ell},\;\forall\:j\in\{2,3\}\:\right\rangle
    \simeq\Z_\ell^2\rtimes\Z_\ell,
   \end{split}\]
   and such that $\theta_i(x)=1+\ell$ and $\theta_i(y_i)=\theta_i(y_3)=1$ for both $i=1,2$.
   By Remark~\ref{rem:thetabel}, these two oriented pro-$\ell$ groups are respectively $\theta_1$- and $\theta_2$-abelian.
Set $U=G_1\cap G_2$ --- i.e. $U$ is the subgroup generated by $x,y_3$. 
Clearly, $\theta_1\vert_U=\theta_2\vert_U$, and 
$$(U,\theta_i\vert_U)=\langle\:y_3\:\rangle\rtimes(\langle\:x\:\rangle,\theta_i\vert_{\langle\:x\:\rangle})
\qquad\text{for both }i=1,2,$$
which is $\theta_i\vert_U$-abelian by Remark~\ref{rem:pthetabel subgroups}.
Moreover, it is straightforward to see that the maps $\bar\iota_{U,i}\colon U/\Phi(U)\to G_i/\Phi(G_i)$ are injective for both $i=1,2$.
Now let $(G,\theta)$ be the oriented pro-$\ell$ group $(G_1,\theta_1)\amalg_U^{\hat\ell}(G_2,\theta_2)$. Then
\[
  G=\left\langle\: x, y_1,y_2,y_3\:\mid\:[y_1,y_3]=[y_2,y_3]=1,\:{}^xy_i=y_i^{1+\ell},\;\forall\:i\in\{1,2,3\}\:\right\rangle
 \]
and $\theta(x)=1+\ell$, $\theta(y_j)=1$ for $j=1,2,3$.
Moreover, one has an epimorphism of oriented pro-$\ell$ groups $\tau\colon(G,\theta)\to(\bar G,\bar\theta)$, where
\[
  \bar G=\left\langle\: \bar x, \bar y_1,\bar y_2,\bar y_3\:\mid\:[\bar y_j,\bar y_{j'}]=1,\:{}^{\bar x}\bar y_j=\bar y_j^{1+\ell},\;\forall\:j,j'\in\{1,2,3\}\:\right\rangle\simeq\Z_\ell^3\rtimes\Z_\ell,
 \]
and $\bar x=\tau(x)$, $\bar y_j=\tau(y_j)$ for $j=1,2,3$. 
By Remark~\ref{rem:thetabel}, $(\bar G,\bar\theta)$ is $\bar\theta$-abelian, and thus $\kernel(\tau)\supseteq I_\theta(G)$ by Proposition~\ref{prop:maxTh}.
On the other hand, it is straightforward to see that $\Phi(G)\supseteq\kernel(\tau)$, and hence $(G,\theta)$ is Kummerian by Proposition~\ref{prop:kummer}--(vi).
Since $(G_1,\theta_1)$ and $(G_2,\theta_2)$ have the Bogomolov-Positselski property by Example~\ref{exam:bogo}--(a), Theorem~\ref{thm:amalg bogo} implis that also $(G,\theta)$ has the Bogomolov-Positselski property.
Observe that $G$ is $H^\bullet$-quadratic (cf. \cite[Rem.~5.25--(c)]{qsv:quadratic}).
\end{exam}



\subsection{Pro-$\ell$ groups of elementary type}
\label{ssec:ETC}

Let $ (G,\theta)$ be an oriented pro-$\ell$ group, and let $A$ be a free abelian pro-$\ell$ group.
Recall that the {\sl semidirect product} $A\rtimes(G,\theta)=(A\rtimes G,\theta\circ\pi)$ is the oriented pro-$\ell$ group where $gag^{-1}=a^{\theta(g)}$ for all $a\in A$ and $g\in G$, and $\pi\colon A\rtimes G\to G$ is the canonical projection (cf. \cite[\S~3]{efrat:small}).

The following is straightforward (cf., e.g., \cite[Prop.~3.6]{eq:kummer}).

\begin{prop}\label{prop:semidirect kummer}
Given an oriented pro-$\ell$ group $ (G,\theta)$ and a free abelian pro-$\ell$ group $A$, one has 
$K_{\theta\circ\pi}(A\rtimes G)=K_\theta(G)$.
In particular, $A\rtimes(G,\theta)$ is Kummerian if, and only if, $(G,\theta)$ is Kummerian; and 
$A\rtimes(G,\theta)$ has the Bogomolov-Positselski property if, and only if, $(G,\theta)$ has the Bogomolov-Positselski property.
\end{prop}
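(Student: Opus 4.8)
The plan is to compute $K_{\theta\circ\pi}(A\rtimes G)$ directly from the definition \eqref{eq:defK2} and to observe that its set of topological generators coincides, element for element, with that of $K_\theta(G)$. Write $\tilde\theta=\theta\circ\pi$ and represent the elements of $A\rtimes G$ in the normal form $ag$ with $a\in A$ and $g\in G$, so that $(a_1g_1)(a_2g_2)=a_1a_2^{\theta(g_1)}g_1g_2$, $\tilde\theta(ag)=\theta(g)$, and $(ag)^{-1}=a^{-\theta(g)^{-1}}g^{-1}$. Then $\kernel(\tilde\theta)=A\rtimes\kernel(\theta)$, and since $\theta$ is trivial on $\kernel(\theta)$ the subgroup $A$ is actually central in $\kernel(\tilde\theta)$, so $\kernel(\tilde\theta)\cong A\times\kernel(\theta)$; in particular $(bh)^{n}=b^{n}h^{n}$ whenever $h\in\kernel(\theta)$, $b\in A$ and $n\in\Z$, and by continuity the same holds for $n\in\Z_\ell$.

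The one computational step is the bookkeeping in the semidirect product: for $\tilde g=ag\in A\rtimes G$ and $\tilde h=bh\in\kernel(\tilde\theta)$ (so $h\in\kernel(\theta)$) one finds, using that $A$ is abelian, the conjugation rule $g'a(g')^{-1}=a^{\theta(g')}$, and the power identity $(bh)^{-\theta(g)}=b^{-\theta(g)}h^{-\theta(g)}$, that every $A$-contribution cancels and
\begin{equation*}
\tilde h^{-\tilde\theta(\tilde g)}\,\tilde g\,\tilde h\,\tilde g^{-1}=h^{-\theta(g)}\,ghg^{-1}\ \in\ G .
\end{equation*}
As $\tilde g$ runs over $A\rtimes G$ its image $g=\pi(\tilde g)$ runs over all of $G$, and as $\tilde h$ runs over $\kernel(\tilde\theta)$ its image $h=\pi(\tilde h)$ runs over all of $\kernel(\theta)$; hence the generating set of \eqref{eq:defK2} for $(A\rtimes G,\tilde\theta)$ equals that for $(G,\theta)$, and taking closures of the subgroups generated yields $K_{\tilde\theta}(A\rtimes G)=K_\theta(G)$.

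For the ``in particular'' clauses, note first that $\image(\tilde\theta)=\image(\theta)$, so $(A\rtimes G,\tilde\theta)$ is torsion-free exactly when $(G,\theta)$ is, and both sides lie in the torsion-free setting in which Kummerianity is defined. Using that $A$ is a torsion-free central subgroup of $\kernel(\tilde\theta)\cong A\times\kernel(\theta)$ and that $K_{\tilde\theta}(A\rtimes G)=K_\theta(G)$ is contained in $G$, one checks that the isolator of $K_{\tilde\theta}(A\rtimes G)$ in $\kernel(\tilde\theta)$ is again $I_\theta(G)$: an element $(a,x)^{\ell^{k}}=(a^{\ell^{k}},x^{\ell^{k}})$ can lie in $K_\theta(G)$ only if $a^{\ell^{k}}=1$, whence $a=1$ since $A$ is torsion-free. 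Thus $I_{\tilde\theta}(A\rtimes G)=I_\theta(G)$, and by the equivalence (i)$\Leftrightarrow$(iv) of Proposition~\ref{prop:kummer} the group $A\rtimes\calG$ is Kummerian iff $I_{\tilde\theta}(A\rtimes G)=K_{\tilde\theta}(A\rtimes G)$ iff $I_\theta(G)=K_\theta(G)$ iff $\calG$ is Kummerian. Finally, granting Kummerianity on both sides, the Bogomolov property for $A\rtimes\calG$ is by Definition~\ref{defi:bogomolov} the freeness of $K_{\tilde\theta}(A\rtimes G)=K_\theta(G)$, which is precisely the Bogomolov property for $(G,\theta)$.

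I expect the only point requiring care to be the displayed identity for the generator: one must track the semidirect-product normal forms through the inversion $(ag)^{-1}=a^{-\theta(g)^{-1}}g^{-1}$ and through the $\ell$-adic power $(bh)^{-\theta(g)}$, checking that each $A$-term vanishes because $\theta$ is trivial on $\kernel(\theta)$. There is no genuine obstacle; this is the ``straightforward'' verification alluded to in \cite[Prop.~3.6]{eq:kummer}.
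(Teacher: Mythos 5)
Your proposal is correct, and it supplies exactly the direct verification that the paper omits (the paper simply declares the statement ``straightforward'' and cites \cite[Prop.~3.6]{eq:kummer}): the cancellation of all $A$-terms in $\tilde h^{-\tilde\theta(\tilde g)}\tilde g\tilde h\tilde g^{-1}$, using that $A$ is central in $\kernel(\tilde\theta)$, gives $K_{\tilde\theta}(A\rtimes G)=K_\theta(G)$, and your isolator argument correctly reduces both the Kummerian and Bogomolov clauses to the same conditions on $K_\theta(G)$ via Proposition~\ref{prop:kummer}(iv) and Definition~\ref{defi:bogomolov}. No gaps.
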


The family $\ET_\ell$ of {\sl oriented pro-$\ell$ groups of elementary type} is the smallest class 
of finitely generated oriented pro-$\ell$ groups satisfying (cf. \cite[\S~3]{efrat:small})
\begin{itemize}
 \item[(a)] the oriented pro-$\ell$ group $(G,\eth_G)$, with $G$ a Demushkin group, is of elementary type;
 \item[(b)] the oriented pro-$\ell$ group $ (\Z_\ell,\theta)$, with $\theta\colon \Z_\ell\to\Z_\ell^\times$ arbitrary, is of elementary type;
 \item[(c)] if the oriented pro-$\ell$ group $ (G,\theta)$ is of elementary type and $A$ is a finitely generated free abelian pro-$\ell$ group,
then also the semidirect product $A\rtimes(G,\theta)$ is of elementary type;
 \item[(d)] if $(G_1,\theta_1)$ and $(G_2,\theta_2)$ are oriented pro-$\ell$ groups of elementary type then also the free pro-$\ell$ product $(G_1,\theta_1)\amalg^{\hat \ell}(G_2,\theta_2)$ is of elementary type.
\end{itemize}

\begin{rem}\label{rem:ETC}\rm
 \begin{itemize}
  \item[(a)] In the original definition of oriented pro-2 groups of elementary type one has that also the cyclic group $C_2$ of order 2, endowed with the non-trivial orientation $\theta_{C_2}\colon C_2\twoheadrightarrow\{\pm1\}\subset\Z_2^\times$, is a pro-2 group of elementary type (cf. \cite[p.~242]{efrat:small}).
  Since our results always assume oriented pro-$\ell$ groups to be torsion-free, we may safely exclude $(C_2,\theta_{C_2})$ from the above definition of oriented pro-$\ell$ groups of elementary type.
  \item[(b)] From the results in \cite[\S~3.3--3.4]{qw:cyclotomic}, one may deduce that a finitley generated subgroup $H$ of an oriented pro-$\ell$ groups of elementary type $(G,\theta)$ gives rise to a pro-$\ell$ groups of elementary type $(H,\theta\vert_H)$.
  \item[(c)] If $(F,\theta)$ is a torsion-free oriented pro-$\ell$ group with $F$ a finitely generated free pro-$\ell$ group and $\theta\colon F\to\Z_\ell^\times$ any orientation, then $(F,\theta)$ is of elementary type.
  Indeed, if $\theta=\bone$, then $(F,\theta)$ is isomorphic to the free pro-$\ell$ product of $d$ copies of the oriented pro-$\ell$ group $(\Z_\ell,\bone)$, where $d$ is the minimal number of generators of $F$.
  Otherwise, $\image(\theta)\simeq\Z_\ell$, and the short exact sequence of pro-$\ell$ groups
  \[
   \xymatrix{ \{1\}\ar[r] & \kernel(\theta)\ar[r] & F\ar[r]& \image(\theta)\ar[r]&\{1\} }
  \]
splits.
In this case, let $\{x_1,\ldots,x_d\}$ be a minimal generating set where $\theta(x_1)\neq1$ and $\theta(x_i)=1$ for $i\geq2$, and let $H$ be the subgroup of $F$ generated by $\{x_2,\ldots,x_d\}$, which is free. 
Then, 
$(F,\theta)\simeq (H,\bone)\amalg^{\hat\ell}(\image(\theta),\mathrm{id}_{\image(\theta)})$, where both factors are oriented pro-$\ell$ groups of elementary type.
 \end{itemize}
 \end{rem}

From Example~\ref{exam:kummer groups}--(b), \S~\ref{ssec:demushkin}, and Propositions \ref{prop:freeprod kummer} and~\ref{prop:semidirect kummer},
one concludes that oriented pro-$\ell$ groups of elementary type are Kummerian.
I.~Efrat's {\sl Elementary Type Conjecture} states that if $\K$ is a field containing a primitive $\ell^{\mathrm{th}}$-root of 1 (and also $\sqrt{-1}$ if $\ell=2$)
and if the maximal pro-$\ell$ Galois group $G_{\K}(\ell)$ is finitely generated, then 
$(G_{\K}(\ell),\ttheta_{\K,\ell})$ is of elementary type (cf. \cites{efrat:etc1,efrat:etc2}, see also \cite[\S~10]{marshall:etc} and \cite[\S~7.5]{qw:cyclotomic}).

\begin{exam}\rm
 The oriented pro-$\ell$ group $(G,\theta)$ as in Example~\ref{ex:square diag} is not of elementary type.
 Indeed, the subgroup of $G$ generated by $\{x,y_1,y_2\}$ contains a finitely generated subgroup which does not complete into a Kummerian oriented pro-$\ell$ group (cf. \cite[Ex.~5.3]{cq:1smooth}) --- in particular, $G$ does not occur as the maximal pro-$\ell$ Galois group of a field containing a primitive $\ell^{\mathrm{th}}$-root of unity (and also $\sqrt{-1}$ if $\ell=2$).
Therefore, $(G,\theta)$ is not of elementary type by Remark~\ref{rem:ETC}--(b).
\end{exam}

\begin{thm}\label{cor:etc}
 Let $ (G,\theta)$ be an oriented pro-$\ell$ group of elementary type.
Then $(G,\theta)$ has the Bogomolov-Positselski property.
\end{thm}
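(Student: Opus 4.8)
The plan is a structural induction on the way $\calG=(G,\theta)$ is assembled inside $\ET_\ell$, i.e.\ on the number of applications of operation (c) (semidirect product with a finitely generated free abelian pro-$\ell$ group) and operation (d) (free pro-$\ell$ product) needed to produce $\calG$ from the two basic families (a) (Demushkin groups with their canonical orientation) and (b) $(\Z_\ell,\theta)$. Throughout one keeps track of the fact that every oriented pro-$\ell$ group of elementary type is Kummerian, so that the Bogomolov property of Definition~\ref{defi:bogomolov} is meaningful at each node of the construction; for $\ell=2$ one stays within the range of torsion-free orientations, i.e.\ $\image(\theta)\subseteq1+4\Z_2$.

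\emph{Base cases.} If $\calG=\calG_D=(G,\eth_G)$ with $G$ a Demushkin group, then $\calG$ has the Bogomolov property by Theorem~\ref{thm:demushkin} (the torsion-freeness hypothesis guarantees $\image(\eth_G)\subseteq1+4\Z_2$ when $\ell=2$, so that theorem applies). If $\calG=(\Z_\ell,\theta)$ for an arbitrary orientation $\theta$, then $G$ is abelian, so $K_\theta(G)=\triv$ and $\calG$ is (split) $\theta$-abelian; hence $\calG$ is Kummerian and has the Bogomolov property by Example~\ref{exam:bogo}--(b).

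\emph{Inductive step.} Suppose first $\calG=A\rtimes\calG_0$ with $A$ a finitely generated free abelian pro-$\ell$ group and $\calG_0=(G_0,\theta_0)\in\ET_\ell$, which by induction has the Bogomolov property. By Proposition~\ref{prop:semidirect kummer} one has $K_\theta(A\rtimes G_0)=K_{\theta_0}(G_0)$, so $\calG$ is Kummerian and has the Bogomolov property precisely because $\calG_0$ does. Suppose instead $\calG=\calG_1\amalg^{\hat\ell}\calG_2$ with $\calG_1,\calG_2\in\ET_\ell$, each carrying the Bogomolov property by induction. Then $\calG$ is Kummerian by Proposition~\ref{prop:freeprod kummer}, and applying Theorem~\ref{thm:amalg bogo} with amalgamating subgroup $U=\triv$ --- in which case hypotheses (i) and (ii) are automatic, cf.\ Remark~\ref{rem:thm amalg bogo}--(a) --- yields that $\calG$ has the Bogomolov property. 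This exhausts all cases.

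The point to stress is that in this layered setup the genuinely hard work lives upstream: it is carried out in Theorem~\ref{thm:demushkin} for Demushkin groups and, above all, in Theorem~\ref{thm:amalg bogo} for amalgams, whose proof runs through the pro-$\ell$ Bass--Serre theory of the tree attached to the amalgam and the resulting free-product decomposition of the stabilizer $K_\theta(G)$. Given those inputs the present statement is a short induction, and the only thing demanding attention is the consistency of the running hypotheses --- that Kummerianity, and (for $\ell=2$) torsion-freeness of the orientation, propagate through operations (c) and (d) so that ``Bogomolov property'' is defined at every stage; this is exactly what Propositions~\ref{prop:semidirect kummer} and~\ref{prop:freeprod kummer} together with the torsion-freeness conventions provide.
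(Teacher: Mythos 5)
Your proof is correct and follows essentially the same route as the paper: a structural induction over the defining operations of $\ET_\ell$, with Theorem~\ref{thm:demushkin} and Example~\ref{exam:bogo} handling the base cases and Proposition~\ref{prop:semidirect kummer} and Theorem~\ref{thm:amalg bogo} (with $U=\triv$) handling the semidirect-product and free-product steps. The only cosmetic difference is your treatment of $(\Z_\ell,\theta)$ via $\theta$-abelianity instead of the paper's appeal to the free pro-$\ell$ group case; note that ``$G$ abelian $\Rightarrow K_\theta(G)=\triv$'' is not true for general abelian $G$ with nontrivial $\theta$, but it does hold for $G=\Z_\ell$ since there either $\theta$ is trivial or $\kernel(\theta)=\triv$.
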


\begin{proof}
If $G$ is a free pro-$\ell$ group, then $(G,\theta)$ has the Bogomolov-Positselski property by Example~\ref{exam:bogo}--(a).
If $G$ is a Demushkin group and $\theta=\eth_G$, then $(G,\eth_G)$ has the Bogomolov-Positselski property by Theorem~\ref{thm:demushkin}.

By Proposition~\ref{prop:semidirect kummer}, if $ (G,\theta)=A\rtimes(G_0,\theta\vert_{G_0})$ where $A$ is a free abelian pro-$\ell$ group and the right side factor is an oriented pro-$\ell$ group of elementary type, then $(G,\theta)$ has the Bogomolov-Positselski property --- provided that $(G_0,\theta\vert_{G_0})$ has the Bogomolov-Positselski property.

Finally, by Theorem~\ref{thm:amalg bogo}, if $(G,\theta)=(G_1,\theta_1)\amalg^{\hat \ell}(G_2,\theta_2)$ and both $(G_1,\theta_1)$ and $(G_2,\theta_2)$ have the Bogomolov-Positselski property, then also $(G,\theta)$ has the Bogomolov-Positselski property.
\end{proof}

Let $\K$ be a field containing a primitive $\ell^{\mathrm{th}}$-root of unity, and set $\K^\times=\K\smallsetminus\{0\}$.
Since Kummer theory yields an isomorphism of (discrete) $\ell$-elementary abelian pro-$\ell$ groups $H^1(G_{\K}(\ell),\F_\ell)^\vee\simeq \K^\times/(\K^\times)^\ell$, the pro-$\ell$ group $G_{\K}(\ell)$ is finitely generated if, and only if, the quotient $\K^\times/(\K^\times)^\ell$ is finite.
One has the following (see \cite[Thm.~D]{MPQT}, and \cite{efrat:fingen} for item~(f)).

\begin{prop}\label{prop:fields ET}
 Let $\K$ be a field containing a primitive $\ell^{\mathrm{th}}$-root of 1 \textup{(}and also $\sqrt{-1}$ if $\ell=2$\textup{)}, 
 such that the quotient $\K^\times/(\K^\times)^\ell$ is finite. 
 Then the oriented pro-$\ell$ group $(G_{\K}(\ell),\theta_{\K,\ell})$ is of elementary type in the following cases:
 \begin{itemize}
  \item[(a)] $\K$ is finite;
\item[(b)] $\K$ is a pseudo algebraically closed (PAC) field, or an extension of relative trascendence degree 1 of a PAC field;
\item[(c)] $\K$ is an extension of trascendence degree 1 of a local field;
\item[(d)] $\K$ is $\ell$-rigid {\rm (}cf. \cite[p.~722]{ware}, see also \cite[\S~3]{cmq:fast}{\rm )};
\item[(e)] $\K$ is algebraic extension of a global field of characteristic not $\ell$;
\item[(f)] $\K=\Bbbk(\!(T)\!)$, where $(G_{\Bbbk}(\ell),\theta_{\Bbbk,\ell})$ is of elementary type.
 \end{itemize}
\end{prop}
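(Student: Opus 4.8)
This statement records a compilation of structural results on maximal pro-$\ell$ Galois groups, so the plan is to treat the seven cases in turn, in each one invoking the known description of the pair $\calG_{\K}=(G_{\K}(\ell),\theta_{\K,\ell})$ and checking that it is assembled from the generators of $\ET_\ell$ --- procyclic groups $(\Z_\ell,\theta)$, Demushkin pairs $\calG_D$, semidirect products by finitely generated free abelian pro-$\ell$ groups, and free pro-$\ell$ products. Throughout one uses the equivalence recorded immediately before the statement, between finiteness of $\K^\times/(\K^\times)^\ell$ and finite generation of $G_{\K}(\ell)$, so that the finite-generation requirement built into the definition of $\ET_\ell$ is automatic.

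Several cases are essentially self-contained. In case (a), if $\K$ is finite then $G_{\K}$ is procyclic, hence $G_{\K}(\ell)\cong\Z_\ell$ and $\calG_{\K}$ is of elementary type by clause (b) of the definition. In case (e), the classification of $\ell$-rigid fields shows that $G_{\K}(\ell)$, with its cyclotomic orientation, is built from a finitely generated free abelian pro-$\ell$ group and copies of $\Z_\ell$ by iterated free and semidirect products, whence $\calG_{\K}\in\ET_\ell$. In case (g), the normalized valuation on $\K=\Bbbk(\!(T)\!)$ yields a split short exact sequence $\triv\to\Z_\ell\to G_{\K}(\ell)\to G_{\Bbbk}(\ell)\to\triv$ in which the action of $G_{\Bbbk}(\ell)$ on the pro-$\ell$ inertia $\Z_\ell$ is through the cyclotomic character --- note that $\Bbbk$ and $\K$ carry the same roots of unity, so $\theta_{\K,\ell}=\theta_{\Bbbk,\ell}\circ\pi$ --- so that $\calG_{\K}\cong\Z_\ell\rtimes\calG_{\Bbbk}$ in the sense of the definition; clause (c) then transfers membership in $\ET_\ell$ from $\calG_{\Bbbk}$ to $\calG_{\K}$.

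For the remaining cases (b), (c), (d), (f) the plan is not to reprove the structure theorems but to quote them: PAC fields, where $G_{\K}(\ell)$ is free pro-$\ell$ so one may also invoke Remark~\ref{rem:ETC}(c), together with their transcendence-degree-one extensions; algebraic extensions of $\Q$; transcendence-degree-one extensions of local fields; and, for $\ell=2$, algebraic extensions of global fields of characteristic $\neq2$. Each of these structural descriptions, together with the identification of the orientation on each factor, is collected in \cite[Theorem~D]{MPQT}, and the most economical write-up routes every case through that reference, reducing case (g) to the stated hypothesis on $\Bbbk$.

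The principal obstacle is not the underlying group structure, which in most cases is classical, but the bookkeeping of the cyclotomic orientation: to place $\calG_{\K}$ in $\ET_\ell$ one must verify that, under each structural isomorphism, $\theta_{\K,\ell}$ restricts to the Demushkin orientation $\eth_G$ on every Demushkin factor --- the only orientation making that factor Kummerian, cf. \cite[Prop.~5.2]{qw:cyclotomic} --- and to the correct orientation on each abelian piece. This is most delicate when $\ell=2$: there one needs the hypothesis $\sqrt{-1}\in\K$ both to guarantee that the pair $\calG_{\K}$ is torsion-free and to pin down the orientations, and case (f) relies on the full classification of pro-$2$ Galois groups of global fields.
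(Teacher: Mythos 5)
Your proposal is correct and takes essentially the same route as the paper: the paper gives no argument beyond declaring the statement well known and citing \cite[Thm.~D]{MPQT}, which is exactly the reference through which you channel all seven cases. The extra per-case sketches you supply (procyclicity for finite fields, the split inertia sequence for $\Bbbk(\!(T)\!)$, the orientation bookkeeping on Demushkin factors) go beyond what the paper records but are consistent with it.
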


Corollary~\ref{cor:fields} follows from Theorem~\ref{cor:etc} and Proposition~\ref{prop:fields ET}.



\begin{bibdiv}
\begin{biblist}

\bib{bens:2coh}{book}{
   author={Benson, D. J.},
   title={Representations and cohomology, II: Cohomology of groups and modules},
   series={Cambridge Studies in Advanced Mathematics},
   volume={31},
   edition={1},
   publisher={Cambridge University Press, Cambridge},
   date={1991},
   pages={x+278},
   isbn={0-521-36135-4},
}

\bib{bogo}{article}{
   author={Bogomolov, F. A.},
   title={On the structure of Galois groups of the fields of rational
   functions},
   conference={
      title={$K$-theory and algebraic geometry: connections with quadratic
      forms and division algebras},
      address={Santa Barbara, CA},
      date={1992},
   },
   book={
      series={Proc. Sympos. Pure Math.},
      volume={58},
      publisher={Amer. Math. Soc., Providence, RI},
   },
   date={1995},
   pages={83--88},
}

		
\bib{bogotschin2}{article}{
   author={Bogomolov, F. A.},
   author={Tschinkel, Y.},
   title={Galois theory and projective geometry},
   journal={Comm. Pure Appl. Math.},
   volume={66},
   date={2013},
   number={9},
   pages={1335--1359},
}

\bib{cmq:fast}{article}{
   author={Chebolu, S. K.},
   author={Minac, J.},
   author={Quadrelli, C.},
   title={Detecting fast solvability of equations via small powerful Galois groups},
   journal={Trans. Amer. Math. Soc.},
   volume={367},
   number={21},
   date={2015},
   pages={8439--8464},
}

\bib{ddsms}{book}{
   author={Dixon, J. D.},
   author={du Sautoy, M. P. F.},
   author={Mann, A.},
   author={Segal, D.},
   title={Analytic pro-$p$ groups},
   series={Cambridge Studies in Advanced Mathematics},
   volume={61},
   edition={2},
   publisher={Cambridge University Press, Cambridge},
   date={1999},
   pages={xviii+368},
   isbn={0-521-65011-9},
}

\bib{efrat:etc1}{article}{
   author={Efrat, I.},
   title={Orderings, valuations, and free products of Galois groups},
   journal={Sem. Structure Alg\'ebriques Ordonn\'ees, Univ. Paris VII},
   date={1995},
}

\bib{efrat:etc2}{article}{
   author={Efrat, I.},
   title={Pro-$p$ Galois groups of algebraic extensions of $\mathbf{Q}$},
   journal={J. Number Theory},
   volume={64},
   date={1997},
   number={1},
   pages={84--99},
}

\bib{efrat:small}{article}{
   author={Efrat, I.},
   title={Small maximal pro-$p$ Galois groups},
   journal={Manuscripta Math.},
   volume={95},
   date={1998},
   number={2},
   pages={237--249},
   issn={0025-2611},
}

\bib{efrat:fingen}{article}{
   author={Efrat, I.},
   title={Finitely generated pro-$p$ absolute Galois groups over global
   fields},
   journal={J. Number Theory},
   volume={77},
   date={1999},
   number={1},
   pages={83--96},
   issn={0022-314X},
}

\bib{eh:freeprod}{article}{
   author={Efrat, I.},
   author={Haran, D.},
   title={On Galois groups over pythagorean and semi-real closed fields},
   journal={Israel J. Math.},
   volume={85},
   date={1994},
   number={1-3},
   pages={57--78},
}

\bib{eq:kummer}{article}{
   author={Efrat, I.},
   author={Quadrelli, C.},
   title={The Kummerian property and maximal pro-$p$ Galois groups},
   journal={J.~Algebra},
   volume={525},
   date={2019},
   pages={284--310},
   issn={0021-8693},
}



\bib{HW:book}{book}{
   author={Haesemeyer, C.},
   author={Weibel, Ch.},
   title={The norm residue theorem in motivic cohomology},
   series={Annals of Mathematics Studies},
   volume={200},
   publisher={Princeton University Press, Princeton, NJ},
   date={2019},
   pages={xiii+299},
}


\bib{kur:grp}{book}{
author={Kurosh, A. G.},
title={The theory of groups},
note={Translated from the Russian and edited by K. A. Hirsch. 2nd English
ed. 2 volumes},
publisher={Chelsea Publishing Co., New York},
date={1960},
pages={Vol. 1: 272 pp. Vol. 2: 308},
}

\bib{labute:demushkin}{article}{
   author={Labute, J. P.},
   title={Classification of Demushkin groups},
   journal={Canad. J. Math.},
   volume={19},
   date={1967},
   pages={106--132},
   issn={0008-414X},
}

\bib{lazard}{article}{
   author={Lazard, M.},
   title={Groupes analytiques $p$-adiques},
   language={French},
   journal={Inst. Hautes \'{E}tudes Sci. Publ. Math.},
   number={26},
   date={1965},
   pages={389--603},
   issn={0073-8301},
}

\bib{marshall:etc}{article}{
   author={Marshall, M.},
   title={The elementary type conjecture in quadratic form theory},
   conference={
      title={Algebraic and arithmetic theory of quadratic forms},
   },
   book={
      series={Contemp. Math.},
      volume={344},
      publisher={Amer. Math. Soc., Providence, RI},
   },
   date={2004},
   pages={275--293},}

\bib{melnikov:freeprod}{article}{
   author={Mel\cprime nikov, O. V.},
   title={Subgroups and the homology of free products of profinite groups},
   language={Russian},
   journal={Izv. Akad. Nauk SSSR Ser. Mat.},
   volume={53},
   date={1989},
   number={1},
   pages={97--120},
   issn={0373-2436},
   translation={
      journal={Math. USSR-Izv.},
      volume={34},
      date={1990},
      number={1},
      pages={97--119},
      issn={0025-5726},
   },
}

\bib{MPQT}{article}{
   author={Minac, J.},
   author={Pasini, F. W.},
   author={Quadrelli, C.},
   author={T\^an, N. D.},
   title={Koszul algebras and quadratic duals in Galois cohomology},
   journal={Adv. Math.},
   date={2021},
   volume={380},
   number={107569},
}

\bib{banff}{article}{
   author={Minac, J.},
   author={Pop, F.},
   author={Topaz, A.},
   author={Wickelgren, K.},
   title={Workshop report ``Nilpotent Fundamental Groups''},
   conference={
      title={Nilpotent fundamental groups},
      address={BIRS for Mathematical Innovation and Discovery, Banff AB, Canada},
      date={June 2017} },   
   note={Available at \tt{https://www.birs.ca/workshops/2017/17w5112/report17w5112.pdf}},
}

\bib{nsw:cohn}{book}{
   author={Neukirch, J.},
   author={Schmidt, A.},
   author={Wingberg, K.},
   title={Cohomology of number fields},
   series={Grundlehren der Mathematischen Wissenschaften },
   volume={323},
   edition={2},
   publisher={Springer-Verlag, Berlin},
   date={2008},
   pages={xvi+825},
   isbn={978-3-540-37888-4},}
   
\bib{pos:k}{article}{
   author={Positselski, L.},
   title={Koszul property and Bogomolov's conjecture},
   journal={Int. Math. Res. Not.},
   date={2005},
   number={31},
   pages={1901--1936},
   issn={1073-7928},
}

\bib{posivis}{article}{
   author={Positselski, L.},
   author={Vishik, A.},
   title={Koszul duality and Galois cohomology},
   journal={Math. Res. Lett.},
   volume={2},
   date={1995},
   number={6},
   pages={771--781},
}

\bib{cq:bk}{article}{
   author={Quadrelli, C.},
   title={Bloch-Kato pro-$p$ groups and locally powerful groups},
   journal={Forum Math.},
   volume={26},
   date={2014},
   number={3},
   pages={793--814},
   issn={0933-7741},
}

\bib{cq:onerel}{article}{
   author={Quadrelli, C.},
   title={One-relator maximal pro-$p$ Galois groups and the {K}oszulity conjectures},
   journal={Q. J. Math.},
   date={2021},
   volume={72},
   number={3},
   pages={835--854},
   }

\bib{cq:1smoothBK}{article}{
   author={Quadrelli, C.},
   title={1-smooth pro-$p$ groups and Bloch-Kato pro-$p$ groups},
   journal={Homology Homotopy Appl.},
   date={2022},
   note={In press, available at {\tt arXiv:1904.04789}.},
}

\bib{cq:1smooth}{unpublished}{
   author={Quadrelli, C.},
   title={Chasing maximal pro-$p$ Galois groups with 1-cyclotomicity},
   date={2021},
   note={Preprint, available at {\tt arXiv:2106.00335}},
}

\bib{qsv:quadratic}{unpublished}{
   author={Quadrelli, C.},
   author={Snopce, I.},
   author={Vannacci, M.},
   title={On pro-$p$ groups with quadratic cohomology},
   date={2019},
   note={Preprint, available at {\tt arXiv:1906.04789}},
}

\bib{qw:cyclotomic}{article}{
   author={Quadrelli, C.},
   author={Weigel, Th.},
   title={Profinite groups with a cyclotomic $p$-orientation},
   date={2020},
   journal={Doc. Math.},
   volume={25},
   pages={1881--1916},
}

\bib{ribes:amalg}{article}{
   author={Ribes, L.},
   title={On amalgamated products of profinite groups},
   journal={Math. Z.},
   volume={123},
   date={1971},
   pages={357--364},
   issn={0025-5874},
}

\bib{ribes:book}{book}{
   author={Ribes, L.},
   title={Profinite graphs and groups},
   series={Ergebnisse der Mathematik und ihrer Grenzgebiete. 3. Folge. A
   Series of Modern Surveys in Mathematics},
   volume={66},
   publisher={Springer, Cham},
   date={2017},
   pages={xv+471},
   isbn={978-3-319-61041-2},
   isbn={978-3-319-61199-0},
}

\bib{ribzal:book}{book}{
   author={Ribes, L.},
   author={Zalesskii, P. A.},
   title={Profinite groups},
   series={Ergebnisse der Mathematik und ihrer Grenzgebiete. 3. Folge. A
   Series of Modern Surveys in Mathematics},
   volume={40},
   edition={2},
   publisher={Springer-Verlag, Berlin},
   date={2010},
   pages={xvi+464},
   isbn={978-3-642-01641-7},
}
	\bib{serre:gal}{book}{
   author={Serre, J-P.},
   title={Galois cohomology},
   series={Springer Monographs in Mathematics},
   edition={Corrected reprint of the 1997 English edition},
   note={Translated from the French by Patrick Ion and revised by the
   author},
   publisher={Springer-Verlag, Berlin},
   date={2002},
   pages={x+210},
}

\bib{sw:cohomology}{article}{
   author={Symonds, P.},
   author={Weigel, Th.},
   title={Cohomology of $p$-adic analytic groups},
   conference={
      title={New horizons in pro-$p$ groups},
   },
   book={
      series={Progr. Math.},
      volume={184},
      publisher={Birkh\"{a}user Boston, Boston, MA},
   },
   date={2000},
   pages={349--410},,
}
\bib{tate:miln}{article}{
   author={Tate, J.},
   title={Relations between $K_{2}$ and Galois cohomology},
   journal={Invent. Math.},
   volume={36},
   date={1976},
   pages={257--274},
}

\bib{voev}{article}{
   author={Voevodsky, V.},
   title={On motivic cohomology with $\Z/\ell$-coefficients},
   journal={Ann. of Math. (2)},
   volume={174},
   date={2011},
   number={1},
   pages={401--438},
   issn={0003-486X},
}

\bib{ware}{article}{
   author={Ware, R.},
   title={Galois groups of maximal $p$-extensions},
   journal={Trans. Amer. Math. Soc.},
   volume={333},
   date={1992},
   number={2},
   pages={721--728},
}

\bib{weibel}{article}{
   author={Weibel, Ch.},
   title={2007 Trieste lectures on the proof of the Bloch-Kato conjecture},
   conference={
      title={Some recent developments in algebraic $K$-theory},
   },
   book={
      series={ICTP Lect. Notes},
      volume={23},
      publisher={Abdus Salam Int. Cent. Theoret. Phys., Trieste},
   },
   date={2008},
   pages={277--305},
}

\bib{weibel2}{article}{
   author={Weibel, Ch.},
   title={The norm residue isomorphism theorem},
   journal={J. Topol.},
   volume={2},
   date={2009},
   number={2},
   pages={346--372},
   issn={1753-8416},
}
	
\bib{wurfel}{article}{
   author={W\"{u}rfel, T.},
   title={Extensions of pro-$p$ groups of cohomological dimension two},
   journal={Math. Proc. Cambridge Philos. Soc.},
   volume={99},
   date={1986},
   number={2},
   pages={209--211},
   issn={0305-0041},
}

\bib{zalmel}{article}{
   author={Zalesskii, P. A.},
   author={Mel\cprime nikov, O. V.},
   title={Subgroups of profinite groups acting on trees},
   language={Russian},
   journal={Mat. Sb. (N.S.)},
   volume={135(177)},
   date={1988},
   number={4},
   pages={419--439, 559},
   issn={0368-8666},
   translation={
      journal={Math. USSR-Sb.},
      volume={63},
      date={1989},
      number={2},
      pages={405--424},
      issn={0025-5734},
   },
}

\end{biblist}
\end{bibdiv}
\end{document}